\newtheorem{theorem}{Theorem}[section]
\newtheorem{proposition}[theorem]{Proposition}
\newtheorem{lemma}[theorem]{Lemma}
\newtheorem{corollary}[theorem]{Corollary}
\theoremstyle{definition}
\newtheorem{definition}[theorem]{Definition}
\newcommand{\bA}{\mathbf{A}}
\newcommand{\bC}{\mathbf{C}}
\newcommand{\bN}{\mathbf{N}}
\newcommand{\bQ}{\mathbf{Q}}
\newcommand{\bR}{\mathbf{R}}
\newcommand{\bZ}{\mathbf{Z}}
\newcommand{\cF}{\mathcal{F}}
\newcommand{\cS}{\mathcal{S}}
\newcommand{\fa}{\mathfrak{a}}
\newcommand{\fb}{\mathfrak{b}}
\newcommand{\fd}{\mathfrak{d}}
\newcommand{\fm}{\mathfrak{m}}
\newcommand{\fD}{\mathfrak{D}}
\newcommand{\sI}{\mathscr{I}}
\newcommand{\sJ}{\mathscr{J}}
\newcommand{\sO}{\mathscr{O}}
\newcommand{\sfd}{\mathsf{d}}
\newcommand{\sfI}{\mathsf{I}}
\newcommand{\sfJ}{\mathsf{J}}
\newcommand{\abs}[1]{{|{#1}|}}
\newcommand{\rd}[1]{{\lfloor{#1}\rfloor}}
\newcommand{\ru}[1]{{\lceil{#1}\rceil}}
\DeclareMathOperator{\gr}{gr}
\DeclareMathOperator{\mld}{mld}
\DeclareMathOperator{\ord}{ord}
\DeclareMathOperator{\sHom}{\mathscr{H}\!\mathit{om}}
\DeclareMathOperator{\Spec}{Spec}
\DeclareMathOperator{\wt}{wt}
\begin{document}
\title{Minimal log discrepancies on a fixed threefold}

\author{Masayuki Kawakita}
\address{Research Institute for Mathematical Sciences, Kyoto University, Kyoto 606-8502, Japan}
\email{masayuki@kurims.kyoto-u.ac.jp}
\thanks{Partially supported by JSPS Grant-in-Aid for Scientific Research (C) 24K06667.}

\begin{abstract}
We prove the ACC for minimal log discrepancies on an arbitrary fixed threefold.
\end{abstract}

\maketitle

\section{Introduction}
Shokurov \cite{Sh04} proved that the ACC for minimal log discrepancies together with the lower semi-continuity implies the termination of flips. In our preceding paper \cite{Kaxv}, we completed the ACC for minimal log discrepancies on a smooth threefold. The purpose of the present paper is to extend it to an arbitrary fixed threefold. ACC stands for the ascending chain condition whilst DCC stands for the descending chain condition. For a pair $(X,\Delta)$ and a scheme-theoretic point $\eta$ in $X$, $\mld_\eta(X,\Delta)$ denotes the minimal log discrepancy of $(X,\Delta)$ at $\eta$. For a subset $I$ of the positive real numbers, $I\in\Delta$ means that all coefficients in $\Delta$ belong to $I$.

\begin{theorem}\label{thm:main}
Fix a normal threefold $X$ and a subset $I$ of the positive real numbers which satisfies the DCC\@. Then the set
\[
\{\mld_\eta(X,\Delta)\mid\textup{$\eta$ scheme-theoretic point},\ \textup{$(X,\Delta)$ pair},\ \Delta\in I\}
\]
satisfies the ACC\@.
\end{theorem}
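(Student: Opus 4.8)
The plan is to reduce the assertion on the fixed but possibly singular threefold $X$ to the smooth case established in \cite{Kaxv}, using a single fixed resolution together with the invariance of log discrepancies under crepant pullback. Fix a log resolution $f\colon Y\to X$; since $X$ is fixed, so are $Y$, the prime $f$-exceptional divisors $E_1,\dots,E_r$, and the discrepancies $\alpha_i\in\bQ$ determined by $K_Y=f^*K_X+\sum_i\alpha_i E_i$. For a pair $(X,\Delta)$ write $K_Y+\Delta_Y=f^*(K_X+\Delta)$, so that
\[
\Delta_Y=f_*^{-1}\Delta+\sum_i(b_i-\alpha_i)E_i,\qquad b_i:=\ord_{E_i}\Delta .
\]
Because $a_E(X,\Delta)=a_E(Y,\Delta_Y)$ for every divisor $E$ over $X$, and $c_X(E)$ is the image of $c_Y(E)$, one obtains $\mld_\eta(X,\Delta)=\inf_{W}\mld_{W}(Y,\Delta_Y)$, the infimum running over the irreducible closed $W\subseteq Y$ whose generic point maps to $\eta$. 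Discarding the non-lc pairs, whose mld is $-\infty$ and thus irrelevant to the ACC, each finite value $\mld_\eta(X,\Delta)$ is attained by a single divisor and hence equals $\mld_{W}(Y,\Delta_Y)$ for one such $W$.

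The first task is then to confine the coefficients of $\Delta_Y$ to a fixed DCC set. Those of $f_*^{-1}\Delta$ already lie in $I$. For the exceptional part, the log canonical hypothesis gives $a_{E_i}(X,\Delta)=\alpha_i+1-b_i\ge0$, so $0\le b_i\le\alpha_i+1$ for each of the finitely many $i$. Since $I$ satisfies the DCC, its minimum $\delta>0$ is positive, and as $b_i=\sum_j c_j\ord_{E_i}D_j$ with $c_j\in I$ and $\ord_{E_i}D_j\in\bZ_{\ge1}$ whenever positive, at most $(\alpha_i+1)/\delta$ prime components of $\Delta$ can contribute to $b_i$. Thus each $b_i$ is a sum of boundedly many terms $c\cdot m$ with $c\in I$ and $m\in\bZ_{\ge1}$ bounded, so the $b_i$, and therefore the $b_i-\alpha_i$, range over a fixed DCC set. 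Together with $I$ this produces a single DCC set $I_Y$ containing all coefficients of every $\Delta_Y$, and I would conclude by applying the smooth-threefold ACC of \cite{Kaxv} to $(Y,\Delta_Y)$: the resulting values $\mld_W(Y,\Delta_Y)$ form an ACC set, which contains the set in Theorem~\ref{thm:main}.

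The main obstacle is that $\Delta_Y$ need not be effective: over the locus where $X$ is canonical or terminal one has $\alpha_i>0$, and then $b_i-\alpha_i$ may be negative, so $(Y,\Delta_Y)$ is only a sub-pair. Consequently the input from \cite{Kaxv} must be available in the generality of $\bR$-sub-boundaries with coefficients in a DCC set, where the minimal log discrepancy is still read off from $a_E(Y,\Delta_Y)=a_E(Y,0)-\ord_E\Delta_Y$, or else the divisors $E_i$ with $\alpha_i>0$ have to be handled separately, for instance by absorbing their fixed, boundedly-singular contribution into the ambient geometry rather than into the boundary. Verifying that the smooth-case theorem applies in this generality, or supplying the separate treatment over the canonical locus, is where the real work lies; once the coefficient set $I_Y$ and the sub-boundary formalism are in place, the reduction above is essentially formal.
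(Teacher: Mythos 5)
Your coefficient bookkeeping (discarding non-lc pairs, bounding the number of components of $\Delta$ contributing to $b_i=\ord_{E_i}\Delta$, and concluding that the $b_i-\alpha_i$ lie in a fixed DCC set) is correct, but the obstacle you flag in your last paragraph is not a technicality to be "verified" --- it is the entire content of the paper. The crepant pullback $\Delta_Y$ to a log resolution is a genuine sub-boundary whenever some $\alpha_i>b_i$, which happens systematically: over any terminal or canonical singular point of $X$ every exceptional divisor has $\alpha_i>0$, while $b_i$ can be arbitrarily small. The smooth-threefold theorem of \cite{Kaxv} is stated and proved for pairs, i.e.\ effective boundaries, and its proof (generic limits together with Theorem~\ref{thm:dFEM}, lc centres, and MMP constructions of crepant models) uses effectivity throughout; no sub-boundary version is available, and producing one is a problem of essentially the same depth as Theorem~\ref{thm:main} itself. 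So the reduction is not "essentially formal" once a sub-boundary formalism is in place; there is no such formalism to put in place.

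The paper's reduction (Lemma~\ref{lem:reduction}) is engineered precisely to preserve effectivity: first a dlt modification as in Lemma~\ref{lem:dlt}, on which every exceptional divisor appears in the crepant boundary with coefficient exactly one (so one only adds $1$ to $I$), and then a $\bQ$-factorial terminalisation, whose exceptional boundary $B$ is effective because only divisors of nonpositive discrepancy are extracted, with the new coefficients controlled by the bounded Cartier index of Theorem~\ref{thm:GKP}. This chain necessarily stops at a fixed \emph{terminal}, in general singular, threefold: one cannot pass to a smooth model while keeping the crepant boundary effective. Your alternative branch of "absorbing the $E_i$ with $\alpha_i>0$ into the ambient geometry" amounts to contracting them, i.e.\ to working on exactly this terminal model, and at that point one needs Nakamura-type boundedness on a fixed $\bQ$-factorial terminal germ (Theorem~\ref{thm:reduction}), which is what the generic-limit arguments, the construction of birational models, and the termination of the sequences of divisorial contractions via the invariants $m_i$ and $l_i$ in the remaining sections are there to prove. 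Your proposal therefore reproduces only the first, easy step of the argument and leaves the substantive part unaddressed.
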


On a fixed germ, the ACC for minimal log discrepancies is equivalent to the ACC for $a$-lc thresholds, Musta\c{t}\u{a}'s uniform $\fm$-adic semi-continuity and Nakamura's boundedness \cite{K21}. Hence the extension applies to all of them as below.

\begin{theorem}
Fix a normal threefold $X$ and a subset $I$ of the positive real numbers which satisfies the DCC\@. Fix a non-negative real number $a$. Then the set
\[
\biggl\{t\in\bR\;\biggm|
\begin{array}{l}
\textup{$\eta$ scheme-theoretic point},\ \textup{$(X,\Delta)$ pair},\ \textup{$A$ $\bR$-Cartier},\\
\Delta\in I,\ A\in I,\ t\ge0,\ \mld_\eta(X,\Delta+tA)=a
\end{array}
\biggr\}
\]
satisfies the ACC\@.
\end{theorem}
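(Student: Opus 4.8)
The set in question is exactly the set of $a$-lc thresholds of $\bR$-Cartier divisors $A$ with respect to pairs $(X,\Delta)$ on the fixed threefold $X$, taken over all scheme-theoretic points $\eta$ and all data with $I$-coefficients. The plan is to deduce its ACC from Theorem \ref{thm:main} by way of the equivalence recorded in \cite{K21}. I would argue by contradiction: suppose the set failed the ACC, producing a strictly increasing sequence $t_1<t_2<\cdots$ together with witnesses $\eta_i$, pairs $(X,\Delta_i)$ and $\bR$-Cartier divisors $A_i$ with $\Delta_i\in I$, $A_i\in I$ and $\mld_{\eta_i}(X,\Delta_i+t_iA_i)=a$.

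The first step is the bookkeeping that keeps all the perturbed boundaries inside a single DCC set. The coefficients of $\Delta_i+t_iA_i$ lie in $\{0\}\cup I\cup\bigcup_i t_iI\cup(I+\bigcup_i t_iI)$. Because $I$ satisfies the DCC and the $t_i$ are increasing, the union $\bigcup_i t_iI$ again satisfies the DCC: a strictly decreasing sequence $t_{i_k}\alpha_k$ with $\alpha_k\in I$ would, since the $t_{i_k}$ increase, force $\alpha_k=t_{i_k}^{-1}(t_{i_k}\alpha_k)$ to decrease strictly, contradicting the DCC for $I$. As finite unions and Minkowski sums of DCC subsets of $\bR_{>0}$ are again DCC, the boundaries $\Delta_i+t_iA_i$ have coefficients in one fixed DCC set $J$ depending only on $I$ and on the sequence $\{t_i\}$. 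Hence Theorem \ref{thm:main} applies verbatim to the family $(X,\Delta_i+t_iA_i)$ at the points $\eta_i$.

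What remains is to convert the failure of threshold-ACC into a failure of mld-ACC for a family with $J$-coefficients on the \emph{same} threefold $X$, and this is the content I would import from \cite{K21}. At each threshold a divisorial valuation $E_i$ over $X$, centered in the closure of $\eta_i$, computes $a_{E_i}(X,\Delta_i+t_iA_i)=a$, and the affine dependence $a_{E_i}(X,\Delta_i+tA_i)=a_{E_i}(X,\Delta_i)-t\,\ord_{E_i}(A_i)$ expresses $t_i=(a_{E_i}(X,\Delta_i)-a)/\ord_{E_i}(A_i)$, tying the threshold to honest log discrepancies of pairs with $I$-coefficients. Feeding the increments of the $t_i$ back into the neighbouring perturbations $\Delta_i+tA_i$, the reduction of \cite{K21} turns the strictly increasing $\{t_i\}$ into a sequence of minimal log discrepancies at points of $X$, with coefficients in $J$, that violates the ACC, contradicting Theorem \ref{thm:main}.

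The main obstacle, and the point I would have to treat with care, is the \emph{globalization} of the equivalence of \cite{K21}: that statement is phrased on a fixed germ, whereas both Theorem \ref{thm:main} and the assertion to be proved are single uniform statements over all scheme-theoretic points of $X$. A germ-wise equivalence yields the ACC for the threshold set of each individual germ, which does not formally imply the ACC for the union over all $\eta$. I would therefore have to verify that the reduction of \cite{K21} neither leaves $X$ nor enlarges the coefficients beyond the DCC set $J$ determined by $I$, and in particular that it controls the denominators $\ord_{E_i}(A_i)$ uniformly as $\eta_i$ ranges over $X$ — the last being exactly where Nakamura's boundedness, also equivalent to the mld statement by \cite{K21}, enters — so that the manufactured mld sequence genuinely lives in the one family governed by Theorem \ref{thm:main}.
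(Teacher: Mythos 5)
Your DCC bookkeeping (that $\bigcup_i t_iI$ and hence the coefficient set of the $\Delta_i+t_iA_i$ stays DCC) is correct, but the argument never actually manufactures an ACC violation for minimal log discrepancies: the pairs $(X,\Delta_i+t_iA_i)$ all have $\mld_{\eta_i}=a$, so Theorem~\ref{thm:main} ``applies verbatim'' to them and yields nothing. The whole content of the proof is your third step, which you defer to the equivalence of \cite{K21}, and that equivalence is established only on a \emph{fixed germ}. You name this globalization problem yourself but do not resolve it, and it is not a routine verification: an infinite union over closed points of germ-wise ACC sets need not satisfy the ACC, and even the germ-wise implication (mld-ACC implies threshold-ACC) is not soft --- to control how far the mld drops as $t$ passes $t_i$ one needs a uniform bound on $\ord_{E_i}A_i$ (note also that your formula $t_i=(a_{E_i}(X,\Delta_i)-a)/\ord_{E_i}(A_i)$ divides by a quantity that can vanish), and such a bound is precisely Nakamura's boundedness. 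Your proposed fix, invoking Nakamura's boundedness on the global $X$ because it is ``also equivalent to the mld statement by \cite{K21}'', is circular: the global Nakamura statement is Theorem~\ref{thm:nakamura}, one of the four theorems being proved in this paper, and the cited equivalence again holds only germ by germ.

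The paper's route is structurally different and shows what is needed to close this gap: it does not deduce the threshold theorem from Theorem~\ref{thm:main} at all. In Lemma~\ref{lem:reduction} all four theorems are reduced \emph{simultaneously} to the germ of a $\bQ$-factorial terminal threefold --- via the dlt modification of Lemma~\ref{lem:dlt}, a $\bQ$-factorial terminalisation together with the Cartier-index bound of Theorem~\ref{thm:GKP}, and the already known statements on smooth threefolds \cite{K24}, \cite{Kaxv} --- and only then is the germ-level equivalence \cite[theorem~4.6]{K21}, \cite[proposition~8.1]{Kaxv} invoked, reducing everything to the boundedness statement Theorem~\ref{thm:reduction}, which the rest of the paper proves. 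The decisive point missing from your proposal is that a $\bQ$-factorial terminal threefold is singular at only \emph{finitely many} points, so germ-wise statements do globalize after this reduction; without it, the passage from ``for each germ'' to ``for the fixed $X$'' is exactly the unproved step on which your argument rests.
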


\begin{theorem}\label{thm:adic}
Fix a normal threefold $X$ and a subset $I$ of the positive real numbers which satisfies the DCC\@. Then there exists a positive integer $l$ such that for a scheme-theoretic point $\eta$ in $X$, for an $\bR$-divisor $\Delta$ on $X$ with $\Delta\in I$ forming a pair $(X,\Delta)$ and for $\bR$-ideals $\fa=\prod_{j=1}^e\fa_j^{r_j}$ and $\fb=\prod_{j=1}^e\fb_j^{r_j}$ on $X$ with $r_j\in I$, if $\fa_j\sO_{X,\eta}+\fm_\eta^l=\fb_j\sO_{X,\eta}+\fm_\eta^l$ for all $j$, where $\fm_\eta$ denotes the maximal ideal in $\sO_{X,\eta}$, then $\mld_\eta(X,\Delta,\fa)=\mld_\eta(X,\Delta,\fb)$.
\end{theorem}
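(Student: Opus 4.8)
The plan is to obtain Theorem~\ref{thm:adic} from Theorem~\ref{thm:main} through the germ-wise equivalences of \cite{K21}; the substantive input is Nakamura's boundedness, which by \cite{K21} is itself a consequence of the ACC of Theorem~\ref{thm:main}. First I would dispose of the low-codimension points. If $\eta$ is the generic point of $X$ then $\sO_{X,\eta}$ is a field and $\fm_\eta=0$, so the hypothesis reads $\fa_j\sO_{X,\eta}=\fb_j\sO_{X,\eta}$ and the two minimal log discrepancies trivially agree; if $\dim\sO_{X,\eta}=1$ then $\sO_{X,\eta}$ is a discrete valuation ring, and for $l$ larger than the orders involved the hypothesis again forces $\fa_j\sO_{X,\eta}=\fb_j\sO_{X,\eta}$. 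So it suffices to treat the points of codimension $2$ and $3$. For these, $\mld_\eta(X,\Delta,\fa)\le\dim\sO_{X,\eta}\le3$ whenever finite, and any divisor $E$ over $X$ computing the minimal log discrepancy has centre $\overline{\{\eta\}}$, so that $\ord_E\fm_\eta\ge1$.

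The mechanism is an order comparison. Suppose $\mld_\eta(X,\Delta,\fa)\le\mld_\eta(X,\Delta,\fb)$, assume both are finite, and let $E$ compute $\mld_\eta(X,\Delta,\fa)$, so that $a_E(X,\Delta,\fa)=A_X(E)-\ord_E\Delta-\sum_jr_j\ord_E\fa_j$ equals the minimal log discrepancy. As every summand is non-negative and $r_j\ge\min I>0$ (a DCC set of positive reals has a positive minimum), this gives $\ord_E\fa_j\le A_X(E)/\min I$ for every $j$. On the other hand, the hypothesis $\fa_j\sO_{X,\eta}+\fm_\eta^l=\fb_j\sO_{X,\eta}+\fm_\eta^l$ yields $\min(\ord_E\fa_j,\,l\cdot\ord_E\fm_\eta)=\min(\ord_E\fb_j,\,l\cdot\ord_E\fm_\eta)$, hence $\ord_E\fa_j=\ord_E\fb_j$ as soon as $\ord_E\fa_j<l\cdot\ord_E\fm_\eta$. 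Since $\ord_E\fm_\eta\ge1$, it therefore suffices that $l>A_X(E)/\min I$: then $\ord_E\fb_j=\ord_E\fa_j$ for all $j$, so $a_E(X,\Delta,\fb)=a_E(X,\Delta,\fa)=\mld_\eta(X,\Delta,\fa)$ and thus $\mld_\eta(X,\Delta,\fb)\le\mld_\eta(X,\Delta,\fa)$; the reverse inequality is symmetric.

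It remains to choose $l$ independently of all the data, which is exactly where Theorem~\ref{thm:main} enters. Because the minimal log discrepancies in play lie in $[0,3]$, the equivalence in \cite{K21} turns the ACC of Theorem~\ref{thm:main} into Nakamura's boundedness for the family parametrised by $X$ and $I$: a constant $N=N(X,I)$ such that a finite minimal log discrepancy is always computed by some $E$ with $A_X(E)\le N$. Taking $l=\rd{N/\min I}+1$ then settles the log-canonical case. The non-log-canonical case, where $\mld_\eta=-\infty$, is subsumed in the same package, since the accompanying ACC for $0$-lc thresholds makes log canonicity an $\fm_\eta$-adically stable condition, so that $(X,\Delta,\fa)$ and $(X,\Delta,\fb)$ are log canonical together. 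The main obstacle is precisely this uniformity: the statements of \cite{K21} are germ-wise, and the crux is that the global ACC of Theorem~\ref{thm:main}, quantified over all scheme-theoretic points $\eta$ simultaneously, is what allows the boundedness constant $N$, and hence $l$, to be chosen independently of $\eta$ and of the members of the DCC family.
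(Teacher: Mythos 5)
Your order-comparison mechanism (bounding $\ord_E\fa_j$ by the log discrepancy of a computing divisor, then using $\min(\ord_E\fa_j,l\ord_E\fm_\eta)=\min(\ord_E\fb_j,l\ord_E\fm_\eta)$) is sound and is essentially the germ-wise argument behind \cite[theorem~4.6]{K21}. But the step you yourself call ``the crux'' --- extracting a boundedness constant $N(X,I)$ \emph{independent of $\eta$} from Theorem~\ref{thm:main} --- is a genuine gap, not a consequence of the cited equivalences. The equivalences of \cite{K21} and \cite{Kaxv} are formulated on a \emph{fixed germ}: they convert the ACC at a fixed point into a bound at that point, because their proofs run through generic limits, which require a single complete local ring. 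Applying them point by point gives a bound $N_\eta$ depending on $\eta$, and nothing in the global ACC statement of Theorem~\ref{thm:main} upgrades this to uniformity: if a sequence witnessing unboundedness involves pairwise distinct closed points $x_i$ (exactly the case the germ-wise theory does not cover), no generic limit can be formed, and the ACC for the values $\mld_{x_i}(X,\Delta_i)$ --- which may all be equal --- yields no contradiction. The paper's logic runs the other way: neither Theorem~\ref{thm:main} nor Theorem~\ref{thm:adic} is deduced from the other; all four theorems are reduced simultaneously (Lemma~\ref{lem:reduction}) to the germ-wise boundedness of Theorem~\ref{thm:reduction}, and uniformity in $\eta$ is obtained geometrically --- a dlt modification (Lemma~\ref{lem:dlt}) and a $\bQ$-factorial terminalisation with the Cartier-index bound of Theorem~\ref{thm:GKP} produce a crepant terminal model that is singular at only finitely many points, where the germ-wise equivalences are applied, the smooth locus being covered by the fixed smooth threefold results of \cite{K24}, \cite{Kaxv}.

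Two further points would also need repair on your route. First, the germ-wise equivalence you invoke is for triples with $\bR$-ideals, whereas Theorem~\ref{thm:main} concerns pairs; since the exponents $r_j\in I$ may exceed one and the $\fa_j$ are arbitrary ideals, replacing $\fa_j$ by the divisor of a general member is not automatic, and the paper performs this translation only on the terminal germ, using $\fm$-primary approximations and the bounded Cartier index. Second, on a normal $X$ that is not $\bQ$-Gorenstein the quantity $a_E(X)$ in your inequality $\ord_E\fa_j\le a_E(X)/\min I$ is undefined, so the bound must be phrased via $a_E(X,\Delta)$ or run on a crepant terminal model --- which again forces precisely the reduction that your proposal tries to bypass.
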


\begin{theorem}\label{thm:nakamura}
Fix a log terminal threefold $X$ and a subset $I$ of the positive real numbers which satisfies the DCC\@. Then there exists a positive integer $l$ such that for every scheme-theoretic point $\eta$ in $X$ and for every $\bR$-Cartier $\bR$-divisor $\Delta$ on $X$ with $\Delta\in I$, there exists a divisor $E$ over $X$ which computes $\mld_\eta(X,\Delta)$ and has the log discrepancy $a_E(X)\le l$.
\end{theorem}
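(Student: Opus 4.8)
The plan is to deduce Theorem~\ref{thm:nakamura} from Theorem~\ref{thm:main} by means of the germ-wise equivalence between the ACC for minimal log discrepancies and Nakamura's boundedness established in \cite{K21}. The essential new difficulty is not the equivalence itself but the \emph{uniformity} of the integer $l$ as the point $\eta$ ranges over the whole fixed threefold $X$, so I would argue globally rather than germ by germ, and by contradiction.

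First I would set up the contradiction. Suppose no single $l$ works. Then for each positive integer $i$ there exist a scheme-theoretic point $\eta_i$ and an $\bR$-Cartier $\bR$-divisor $\Delta_i$ with $\Delta_i\in I$ forming a pair, such that every divisor $E$ computing $\mld_{\eta_i}(X,\Delta_i)$ satisfies $a_E(X)>i$. The aim is to convert this sequence of computing divisors of unbounded log discrepancy into a strictly ascending sequence of minimal log discrepancies, which Theorem~\ref{thm:main} forbids since it provides the ACC for the values $\mld_\eta(X,\Delta)$ over \emph{all} $\eta\in X$ simultaneously.

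Next, to bring the situation into the fixed-germ framework of \cite{K21}, I would exploit that $X$ is of finite type and log terminal. By Noetherian induction and constructibility I would stratify $X$ into finitely many locally closed pieces along which the formal germ $\widehat{\sO}_{X,\eta}$, and hence the relevant singularity data, is uniform; the log terminal hypothesis is what confines the three-dimensional singularity types to finitely many families and guarantees $\mld_\eta(X,\Delta)\ge 0$, so that a computing divisor genuinely exists. Passing to a subsequence, I may then assume that all $\eta_i$ lie in a single stratum, so that the ambient local models coincide and only the boundary $\Delta_i$ and the extracted divisor $E_i$ vary.

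On the resulting fixed germ I would run the generic-limit machinery of \cite{K21}: taking generic limits of the coefficients appearing in $\Delta_i$ (using the DCC of $I$) and of the ideals cutting out the centres of the $E_i$, I would produce a limit boundary $\Delta_\infty\in I$ together with divisors whose contributions to $\mld_{\eta_i}(X,\Delta_i)$ form a strictly increasing sequence of real numbers, contradicting the ACC of Theorem~\ref{thm:main}. The main obstacle is precisely this last, globalized limiting step: controlling the generic limit when both the centre $\eta_i$ and the extracted divisor $E_i$ move, and verifying that unbounded log discrepancy of the computing divisors really forces the limiting minimal log discrepancies to strictly increase rather than to stabilize. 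The log terminal assumption, together with the Noetherian stratification that pins down a single complete local ring in which to take limits, is what makes this control possible.
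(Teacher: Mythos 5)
Your overall strategy --- deducing Theorem~\ref{thm:nakamura} from Theorem~\ref{thm:main} via the germ-wise equivalence of \cite{K21} and then fighting for uniformity of $l$ over the points $\eta$ --- is not the paper's route (the paper proves Theorems~\ref{thm:main}--\ref{thm:nakamura} simultaneously, reducing all of them through Lemma~\ref{lem:reduction} to the single boundedness statement of Theorem~\ref{thm:reduction} on finitely many terminal germs), and as sketched it has two genuine gaps. First, your uniformity mechanism rests on stratifying the log terminal threefold $X$ into finitely many locally closed pieces ``along which the formal germ $\widehat{\sO}_{X,\eta}$ is uniform''. Noetherian induction and constructibility do not give this: formal (or analytic) triviality of a threefold along a curve of its singular locus is an equisingularity assertion, not a consequence of constructibility, and it is exactly the kind of statement that fails for general varieties (Whitney's cross-ratio example). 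For klt threefolds it is plausible, because the transverse singularities are generically rigid quotient singularities, but you would have to prove it; nothing in your sketch does. The paper avoids ever needing such a statement: via Lemma~\ref{lem:dlt} and a $\bQ$-factorial terminalisation (with Theorem~\ref{thm:GKP} controlling the new coefficient set $K\cup r^{-1}I_+\cup(K+r^{-1}I_+)$), every pair on $X$ is transferred crepantly to a fixed $\bQ$-factorial \emph{terminal} threefold, which is singular at only finitely many points; uniformity then comes from finitely many fixed germs plus the known smooth case \cite{Kaxv}, with no equisingularity input.

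Second, and more seriously, your final step --- running ``the generic-limit machinery'' to show that unbounded log discrepancies of computing divisors force a strictly increasing sequence of minimal log discrepancies contradicting Theorem~\ref{thm:main} --- is precisely the hard implication, and your sketch does not supply it (you yourself flag it as ``the main obstacle''). The generic limit $\hat\Delta$ lives on $\hat X$, the spectrum of a complete local ring over a large extension field $K$, so $\mld_{\hat x}(\hat X,\hat\Delta)$ is \emph{not} one of the values governed by Theorem~\ref{thm:main}; relating it to $\mld_x(X,\Delta_i)$ is exactly what Lemma~\ref{lem:limit} does not give for free and what Sections~4--7 of the paper are devoted to (construction of the models $Y_K\to X_K$ via Propositions~\ref{prp:ct} and~\ref{prp:sequence}, the classification of divisorial contractions, and the termination argument of Theorem~\ref{thm:sequence}). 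If instead you intend to quote the equivalence of \cite{K21} as a black box germ by germ, then the generic-limit paragraph is redundant, but the entire burden falls back on the unproved stratification. Finally, a smaller error: the log terminal hypothesis does not guarantee $\mld_\eta(X,\Delta)\ge0$ (pairs with $\mld_\eta(X,\Delta)=-\infty$ certainly occur on a log terminal $X$); as remarked after Lemma~\ref{lem:reduction}, that hypothesis is used to control the divisors computing $\mld_\eta(X,\Delta)=-\infty$, which is a different matter.
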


The ACC for minimal log discrepancies is predicted without fixing a variety. In dimension three, it is known for minimal log discrepancies greater than $1-\varepsilon$, where $\varepsilon$ is a positive real number depending on the set of coefficients in the boundaries \cite{HLLaxv}. If the boundaries are zero, then one can take $\varepsilon$ to be $1/6$ \cite{LLaxv}. The ACC is also known in the case when the pairs admit a bounded number of exceptional divisors with log discrepancy at most one \cite{HLapp}.

We shall explain the proof of the theorems. After reduction to the case when $x\in X$ is the germ at a closed point of a $\bQ$-factorial terminal threefold, we aim at Nakamura's boundedness for a sequence of boundaries $\Delta_i$ stated as Theorem~\ref{thm:reduction}, that is, the existence of a bound $l$ such that for every $i$ there exists a divisor $E_i$ over $X$ which computes $\mld_x(X,\Delta_i)$ and has $a_{E_i}(X)\le l$. Extending our former work \cite{K15}, we may assume that $\mld_x(X,\Delta_i)$ is less than one. We seek a birational model $y_i\in Y_i\to x\in X$ such that it admits a pair $(Y_i,\Gamma_i)$ crepant to $(X,\Delta_i)$ and such that $\mld_x(X,\Delta_i)$ equals $\mld_{y_i}(Y_i,\Gamma_i)$ at a smooth point $y_i$ of $Y_i$. Once one obtains the model for infinitely many indices $i$ in a family, one can replace the pair $(X,\Delta_i)$ by $(Y_i,\Gamma_i)$ and apply the boundedness on a smooth threefold.

A uniform construction of models follows from the construction of a model of the pair $(\hat X,\hat\Delta)$ for the generic limit $\hat\Delta$ of the sequence of $\Delta_i$ defined on the spectrum $\hat x\in\hat X$ of a complete local ring. Regarding $(\hat X,\hat\Delta)$ as if it were a usual pair $(X,\Delta)$, one has only to construct a model $y\in Y\to x\in X$ for the fixed pair $(X,\Delta)$. The remaining case is when $(X,\Delta)$ is log canonical and has the smallest lc centre $C$ of dimension one, which is smooth. After a reduction using the ACC for canonical thresholds \cite{HLLaxv}, we inductively build an infinite sequence of divisorial contractions $E_{i+1}\subset X_{i+1}\to x_i\in X_i$ contracting the divisor $E_{i+1}$ to the point $x_i$ over $x$ that lies on the strict transform $C_i$ of $C$, where $x_0\in X_0$ is $x\in X$, as in Proposition~\ref{prp:sequence}. It suffices to find an index $i$ such that $X_i$ is smooth at $x_i$.

We have a classification of threefold divisorial contractions mainly due to the author \cite{K01}, \cite{K02}, \cite{K03}, \cite{K05}. One can derive from the classification that $X_i$ is Gorenstein at $x_i$ for infinitely many $i$ as in Corollary~\ref{crl:nG}. Hence we consider the composite $x_j\in X_j\to x_i\in X_i$ where $i<j$ such that both $x_i\in X_i$ and $x_j\in X_j$ are Gorenstein. Define an invariant $m_i$ to be the minimum of the integer which equals the order along $E_j$ of the maximal ideal in $\sO_{X_i}$ defining $x_i$ for infinitely many $j$. By definition, $m_i$ does not increase and thus we may assume $m_i$ to be constant. Then we study an invariant $l_i$ originally introduced by Mori \cite{Mo88} for the existence of threefold flips. It is defined to be the length of the cokernel of the natural map $\bigwedge^2\sI_i/\sI_i^{(2)}\to\omega_{X_i}\otimes\omega_{C_i}^{-1}$, where $\sI_i$ denotes the ideal sheaf in $\sO_{X_i}$ defining $C_i$. By a careful application of the classification, we conclude as in Propositions~\ref{prp:G-G} and~\ref{prp:G-nG-G} that $l_i$ decreases strictly until we attain a smooth point $x_k$ at some index $k$.

\section{Preliminaries}\label{sct:prelim}
We follow the notation and basic definitions in our preceding papers \cite{K21}, \cite{Kaxv}. We work over an algebraically closed field $k$ of characteristic zero. A \textit{variety} means an integral separated scheme of finite type over $\Spec k$. An ideal sheaf on a variety is assumed to be coherent. The germ is considered at a closed point. A \textit{contraction} means a projective morphism of normal varieties with connected fibres. The \textit{round-down} $\rd{r}$ of a real number $r$ is the greatest integer less than or equal to $r$ whilst the \textit{round-up} $\ru{r}$ is defined as $\ru{r}=-\rd{-r}$.

Let $X$ be a variety and let $Z$ be a closed subvariety of $X$. The \textit{order} $\ord_Z\sI$ along $Z$ of an ideal sheaf $\sI$ in $\sO_X$ is the supremum of the integers $\nu$ such that $\sI\sO_{X,\eta}\subset\fm_\eta^\nu$ for the generic point $\eta$ of $Z$ and the maximal ideal $\fm_\eta$ in $\sO_{X,\eta}$. For a prime divisor $E$ on a normal variety $Y$ equipped with a birational morphism to $X$, we write $\ord_E\sI$ for $\ord_E\sI\sO_Y$. For a function $f$ in $\sO_X$, we write $\ord_Zf$ for $\ord_Zf\sO_X$. When $X$ is normal, the \textit{order} $\ord_ZD$ of an effective $\bQ$-Cartier divisor $D$ on $X$ is defined as $r^{-1}\ord_Z\sO_X(-rD)$ by a positive integer $r$ such that $rD$ is Cartier. The notion of $\ord_ZD$ is linearly extended to $\bR$-Cartier $\bR$-divisors.

A \textit{pair} $(X,\Delta)$ consists of a normal variety $X$ and an effective $\bR$-divisor $\Delta$ on $X$ such that $K_X+\Delta$ is $\bR$-Cartier, in which $\Delta$ is called the \textit{boundary}. Unlike our preceding works, we shall not treat a triple with an $\bR$-ideal except for Theorem~\ref{thm:adic} and the proofs of Theorem~\ref{thm:success} and Proposition~\ref{prp:sequence}. A divisor \textit{over} $X$ means a prime divisor $E$ on some normal variety $Y$ equipped with a birational morphism $\pi\colon Y\to X$. The closure of the image $\pi(E)$ is called the \textit{centre} in $X$ of $E$ and denoted by $c_X(E)$. Two divisors over $X$ are usually identified if they define the same valuation on the function field of $X$. The \textit{log discrepancy} of $E$ with respect to the pair $(X,\Delta)$ is
\[
a_E(X,\Delta)=1+\ord_E(K_Y-\pi^*(K_X+\Delta)).
\]
For the birational morphism $\pi\colon Y\to X$, we say that a pair $(Y,\Gamma)$ is \textit{crepant} to $(X,\Delta)$ if $a_E(Y,\Gamma)=a_E(X,\Delta)$ for all divisors $E$ over $Y$. When the boundary $\Delta$ is zero, in which $X$ is $\bQ$-Gorenstein, we write $a_E(X)$ for $a_E(X,0)$ and the \textit{discrepancy} of $E$ with respect to $X$ is
\[
d_E(X)=a_E(X)-1=\ord_E(K_Y-\pi^*K_X).
\]
The \textit{index} of a $\bQ$-Gorenstein normal variety $X$ is the least positive integer $r$ such that $rK_X$ is Cartier.

Let $\eta$ be a scheme-theoretic point in $X$ and let $Z$ denote the closure of $\{\eta\}$ in $X$. The \textit{minimal log discrepancy} of $(X,\Delta)$ at $\eta$ is
\[
\mld_\eta(X,\Delta)=\inf\{a_E(X,\Delta)\mid\textrm{$E$ a divisor over $X$},\ c_X(E)=Z\}.
\]
It is either a non-negative real number or minus infinity. We say that a divisor $E$ over $X$ \textit{computes} $\mld_\eta(X,\Delta)$ if $c_X(E)=Z$ and $a_E(X,\Delta)=\mld_\eta(X,\Delta)$ (or $a_E(X,\Delta)<0$ when $\mld_\eta(X,\Delta)=-\infty$). It often suffices to deal with the case when $Z$ is a closed point by the relation
\[
\mld_\eta(X,\Delta)=\mld_x(X,\Delta)-\dim Z
\]
for the general closed point $x$ in $Z$. We also define the \textit{minimal log discrepancy} $\mld_W(X,\Delta)$ of $(X,\Delta)$ in a closed subset $W$ of $X$ as the infimum of $a_E(X,\Delta)$ for the divisors $E$ over $X$ such that $c_X(E)\subset W$.

The pair $(X,\Delta)$ is said to be \textit{terminal} (resp.\ \textit{canonical}, \textit{purely log terminal} (\textit{plt})) if $a_E(X,\Delta)>1$ (resp.\ $\ge1$, $>0$) for all divisors $E$ exceptional over $X$. It is said to be \textit{Kawamata log terminal} (\textit{klt}) (resp.\ \textit{log canonical} (\textit{lc})) if $a_E(X,\Delta)>0$ (resp.\ $\ge0$) for all divisors $E$ over $X$. It is said to be \textit{divisorially log terminal} (\textit{dlt}) if it is log canonical and admits a log resolution $X'\to X$ such that $a_E(X,\Delta)>0$ for all divisors $E$ on $X'$ exceptional over $X$, where a \textit{log resolution} means a birational contraction from a smooth variety such that the exceptional locus is a divisor and such that the union of the exceptional locus and the support of the strict transform of the boundary is simple normal crossing. When $\Delta$ is zero, we say that $X$ is terminal, canonical and so forth. In this case, the notions of klt, plt and dlt singularities coincide and we simply say that $X$ is \textit{log terminal} (\textit{lt}). When $(X,\Delta)$ is log canonical, the centre $c_X(E)$ of a divisor $E$ over $X$ such that $a_E(X,\Delta)=0$ is called an \textit{lc centre}. Considered on a germ, an lc centre contained in all lc centres is called the \textit{smallest lc centre}.

Let $o\in\bA^d$ be the germ at origin of the affine space with coordinates $x_1,\ldots,x_d$. The notation $\bA^d/\bZ_r(a_1,\ldots,a_d)$ stands for the quotient of $\bA^d$ by the cyclic group $\bZ_r$ of order $r$ whose generator sends $x_i$ to $\zeta^{a_i}x_i$ for a primitive $r$-th root $\zeta$ of unity. A singularity \'etale to $o\in A=\bA^d/\bZ_r(a_1,\ldots,a_d)$ is called a \textit{cyclic quotient singularity} of \textit{type} $\frac{1}{r}(a_1,\ldots,a_d)$. When $k$ is the field $\bC$ of complex numbers, we use the notation $o\in\fD^d/\bZ_r(a_1,\ldots,a_d)$ for the analytic germ of $o\in A$.

The quotient $A$ is the toric variety $T_N(\Delta)$ for the lattice $N=\bZ^d+\bZ v$ with $v=\frac{1}{r}(a_1,\ldots,a_d)$ and the standard fan $\Delta$. For a primitive element $e=\frac{1}{r}(w_1,\ldots,w_d)$ in $N$ with all $w_i$ positive, we define the \textit{weighted blow-up} $B\to A$ with $\wt(x_1,\ldots,x_d)=\frac{1}{r}(w_1,\ldots,w_d)$ by adding the ray generated by $e$. If $N=\bZ^d+\bZ e$, then $B$ is covered by the affine charts $(x_i\neq0)\simeq\bA^d/\bZ_{w_i}(w_1,\ldots,w_{i-1},-r,w_{i+1},\ldots,w_d)$ for $1\le i\le d$. For a closed subvariety $o\in X$ of the germ $o\in A$, the induced morphism $X_B\to X$ from the strict transform in $B$ is the \textit{weighted blow-up} of $o\in X\subset A$ with the weights above. For a germ $v\in V$ equipped with an \'etale morphism to $o\in X$, the \textit{weighted blow-up} of $V$ is defined as the base change $X_B\times_XV\to V$. See \cite[section~2.2]{K24} for details.

\section{Reduction to a fixed terminal threefold}
The purpose of this section is to reduce the theorems in the introduction to the following boundedness on a fixed terminal singularity.

\begin{theorem}\label{thm:reduction}
Let $x\in X$ be the germ of a $\bQ$-factorial terminal threefold and fix a positive integer $n$. Let $\{\Delta_i\}_{i\in\bN}$ be an infinite sequence of effective $\bQ$-divisors on $X$ such that $n\Delta_i$ is a Cartier divisor. Then there exists a positive integer $l$ such that for infinitely many indices $i$, there exists a divisor $E_i$ over $X$ which computes $\mld_x(X,\Delta_i)$ and has $a_{E_i}(X)\le l$.
\end{theorem}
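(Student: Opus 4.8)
The plan is to pass to a \emph{generic limit} of the sequence $\{\Delta_i\}_{i\in\bN}$ and thereby reduce the uniform boundedness to the construction of a single birational model for one fixed pair on a complete local ring. Concretely, since $n\Delta_i$ is Cartier for all $i$, the $\Delta_i$ all have bounded denominators, and I would record each $\Delta_i$ by its defining data on $\sO_{X,x}$ truncated modulo higher and higher powers of $\fm_x$. By a standard diagonal/compactness argument on the space of truncations (the generic limit construction), after passing to an infinite subsequence there is a single $\bR$-divisor $\hat\Delta$ on the spectrum $\hat x\in\hat X=\Spec\widehat{\sO_{X,x}}$ whose truncations agree with those of infinitely many $\Delta_i$ to arbitrarily high order. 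The key input here is that $\mld_x(X,\Delta)$ depends only on $\Delta$ modulo a sufficiently high power of $\fm_x$; this is precisely the $\fm$-adic semi-continuity established for fixed threefolds in Theorem~\ref{thm:adic}, which lets me transfer any divisor computing the mld of $(\hat X,\hat\Delta)$ back down to one computing the mld of $(X,\Delta_i)$ for infinitely many $i$, with the same log discrepancy $a_{E_i}(X)$.

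With the reduction in hand, it suffices to prove Nakamura's boundedness for the single fixed pair $(\hat X,\hat\Delta)$, which I treat formally as an ordinary pair $(X,\Delta)$ on a $\bQ$-factorial terminal threefold germ $x\in X$. First I would dispose of the easy regimes. When $\mld_x(X,\Delta)\ge1$, the earlier work \cite{K15} already supplies a divisor of bounded log discrepancy computing the mld, so I may assume $\mld_x(X,\Delta)<1$. When $(X,\Delta)$ is klt but not lc, or lc with an lc centre of dimension $0$ or $2$, the classification of low-discrepancy divisors over terminal threefolds—via the weighted blow-ups described in Section~\ref{sct:prelim} and the divisorial-contraction classification of \cite{K01,K02,K03,K05}—directly bounds the relevant divisor. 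The genuinely remaining case, as the introduction signals, is when $(X,\Delta)$ is log canonical with smallest lc centre $C$ a smooth curve.

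The heart of the argument is then the construction of a crepant birational model $y\in Y\to x\in X$ with a pair $(Y,\Gamma)$ crepant to $(X,\Delta)$ for which $\mld_x(X,\Delta)=\mld_{y}(Y,\Gamma)$ at a \emph{smooth} point $y\in Y$, whence boundedness on a smooth threefold (from \cite{Kaxv}) finishes the proof. To build $Y$ I would, after reducing via the ACC for canonical thresholds \cite{HLLaxv}, inductively produce a sequence of divisorial contractions $E_{i+1}\subset X_{i+1}\to x_i\in X_i$ contracting $E_{i+1}$ to a point $x_i$ lying on the strict transform $C_i$ of $C$ (Proposition~\ref{prp:sequence}), with $x_0\in X_0$ equal to $x\in X$. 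From the classification one extracts that $X_i$ is Gorenstein at $x_i$ for infinitely many $i$ (Corollary~\ref{crl:nG}), so I may restrict attention to Gorenstein steps, control the $\fm$-adic multiplicity $m_i$ so that it stabilizes, and then track Mori's invariant $l_i$—the length of the cokernel of $\bigwedge^2\sI_i/\sI_i^{(2)}\to\omega_{X_i}\otimes\omega_{C_i}^{-1}$ along $C_i$. The decisive step, and the one I expect to be the main obstacle, is to show that $l_i$ strictly decreases at each Gorenstein-to-Gorenstein step (and is controlled across intervening non-Gorenstein steps), as in Propositions~\ref{prp:G-G} and~\ref{prp:G-nG-G}; this forces the sequence to terminate at a smooth point $x_k$ in finitely many steps, delivering the desired model and completing Theorem~\ref{thm:reduction}.
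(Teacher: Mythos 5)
Your overall architecture --- generic limit, reduction to the log canonical case with a one-dimensional smallest lc centre, construction of a crepant model with a smooth point via the sequence of divisorial contractions and the invariants $m_i$ and $l_i$ --- matches the paper's. But your first paragraph contains a genuine circularity that breaks the proof. You transfer divisors computing $\mld_{\hat x}(\hat X,\hat\Delta)$ down to the pairs $(X,\Delta_i)$ by invoking Theorem~\ref{thm:adic}, the uniform $\fm$-adic semi-continuity on the fixed threefold $X$. That theorem is not an available input: it is one of the main results of this paper, and Lemma~\ref{lem:reduction} derives it (together with Theorems~\ref{thm:main} and~\ref{thm:nakamura}) \emph{from} Theorem~\ref{thm:reduction}; on a fixed germ the statements are all equivalent by \cite[theorem~4.6]{K21} and \cite[proposition~8.1]{Kaxv}, and for singular terminal germs none of them was previously known. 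So using Theorem~\ref{thm:adic} to prove Theorem~\ref{thm:reduction} assumes exactly what is to be proven. (It is also stated for comparing pairs on $X$ itself, not for comparing $\hat\Delta$ on $\hat X$ with the $\Delta_i$, so even as a black box it would need a reformulation over $\hat X$; and $\hat X$ lives over a large algebraically closed extension $K$ of $k$, being the completion of $\sO_{X_K,x_K}$, not of $\sO_{X,x}$ as you wrote.)

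What the paper does at this juncture is the content your proposal is missing. The transfer from the generic limit is Lemma~\ref{lem:limit}, which applies only under the hypothesis $\mld_{\hat x}(\hat X,\hat\Delta)=\mld_x(X,\Delta_i)$ --- an ACC-type equality that is precisely the issue in the hard case and may fail. The paper therefore proves Theorem~\ref{thm:success}: after constructing the model $\pi_K\colon Y_K\to X_K$ (everything is built over $X_K$, since $\hat X$ itself need not be $\bQ$-factorial), for each $i$ either the equality above holds, and Lemma~\ref{lem:limit} gives the bound, or $\mld_x(X,\Delta_i)=\mld_{y_i}(Y_i,\Gamma_i)$ at the smooth point $y_i\in Y_i$, after which the smooth-threefold boundedness of \cite{Kaxv} applies to $(Y_i,\Gamma_i)$ and the bound $a_{E_i}(Y_i)\le l'$ is converted into a bound on $a_{E_i}(X)$ using the log canonical threshold of $\hat\fm$ on $(\hat X,\hat\Delta)$ and the discrepancies of the exceptional divisors of $Y_K\to X_K$. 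Without Theorem~\ref{thm:success}, or some substitute providing this dichotomy, your reduction ``it suffices to prove Nakamura's boundedness for the single pair $(\hat X,\hat\Delta)$'' is unjustified: a bounded divisor computing $\mld_{\hat x}(\hat X,\hat\Delta)$ says nothing by itself about the divisors computing $\mld_x(X,\Delta_i)$. Two smaller slips in the same spirit: ``klt but not lc'' in your second paragraph is vacuous (you presumably meant the non-lc and the klt cases, which the paper handles by Theorem~\ref{thm:dFEM} and by \cite[theorem~5.1]{K15} respectively --- again in order to obtain the equality feeding Lemma~\ref{lem:limit}, not by any classification of weighted blow-ups), and the dimension-zero lc centre case cannot occur once $\mld_{\hat x}(\hat X,\hat\Delta)>0$.
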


\begin{lemma}\label{lem:dlt}
Let $X$ be a normal quasi-projective threefold. Then there exists a contraction $\mu\colon X'\to X$ from a $\bQ$-factorial normal threefold such that the divisorial part $S'$ of the exceptional locus forms a dlt pair $(X',S')$ and such that for every $\bR$-divisor $\Delta$ on $X$ forming a pair $(X,\Delta)$, the $\bR$-divisor $\Delta'$ on $X'$ defined by the equality $K_{X'}+\Delta'=\mu^*(K_X+\Delta)$ satisfies $S'\le\Delta'$.
\end{lemma}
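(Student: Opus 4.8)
The plan is to construct $\mu\colon X'\to X$ as a $\bQ$-factorial dlt modification of $X$, and then to deduce the inequality $S'\le\Delta'$ uniformly in $\Delta$ from the negativity lemma; the point is that the verification of $S'\le\Delta'$ will use only a property of the model $X'$ itself, so a single $X'$ works simultaneously for every boundary.

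First I would take a log resolution $\mu_0\colon X_0\to X$ and let $S_0$ be the reduced $\mu_0$-exceptional divisor. Then $(X_0,S_0)$ is a $\bQ$-factorial dlt (indeed simple normal crossing) pair and $\mu_0$ is projective birational. Next I would run a $(K_{X_0}+S_0)$-MMP over $X$. Since $(X_0,S_0)$ is dlt and we work relatively over the quasi-projective $X$ in dimension three, this MMP terminates with a relative minimal model $\mu\colon X'\to X$; this is precisely the standard construction of a $\bQ$-factorial dlt modification. Each step preserves $\bQ$-factoriality and the dlt property of the pair, so the output $(X',S')$, where $S'$ is the reduced $\mu$-exceptional divisor, that is, the divisorial part of the exceptional locus, is a $\bQ$-factorial dlt pair with $K_{X'}+S'$ being $\mu$-nef. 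This settles the first assertion.

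For the universal property, fix any $\bR$-divisor $\Delta$ forming a pair $(X,\Delta)$ and define $\Delta'$ by $K_{X'}+\Delta'=\mu^*(K_X+\Delta)$; here $\mu^*(K_X+\Delta)$ makes sense and is numerically $\mu$-trivial because $K_X+\Delta$ is $\bR$-Cartier. Set $D=\Delta'-S'$. Then
\[
-D=(K_{X'}+S')-\mu^*(K_X+\Delta)
\]
is the difference of a $\mu$-nef divisor and a $\mu$-trivial one, hence $-D$ is $\mu$-nef. Pushing forward, $\mu_*D=\mu_*\Delta'-\mu_*S'=\Delta-0=\Delta\ge0$, using $\mu_*K_{X'}=K_X$ and that $S'$ is $\mu$-exceptional. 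By the negativity lemma, an $\bR$-Cartier divisor $D$ with $-D$ being $\mu$-nef and $\mu_*D\ge0$ is itself effective; thus $D\ge0$, that is, $S'\le\Delta'$. Since the only input here is the $\mu$-nefness of $K_{X'}+S'$, which depends on $X'$ alone and not on $\Delta$, the same model $X'$ works for every $\Delta$ at once.

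I expect the main obstacle to be the termination of the relative MMP and the guarantee that it ends at a minimal model rather than a Mori fibre space, together with the relative pseudo-effectivity of $K_{X_0}+S_0$ over $X$ in the case when $K_X$ need not be $\bQ$-Cartier; all of this is absorbed into the known existence of $\bQ$-factorial dlt modifications in dimension three. The remaining points, that $S'$ is exactly the divisorial exceptional locus and that dlt-ness and $\bQ$-factoriality propagate through the MMP, are routine.
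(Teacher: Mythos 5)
Your proposal is correct and follows essentially the same route as the paper: take a log resolution, run the $(K+S)$-MMP over $X$ for the reduced exceptional divisor to get a model with $K_{X'}+S'$ nef over $X$, and then apply the negativity lemma to $S'-\Delta'=(K_{X'}+S')-\mu^*(K_X+\Delta)$, noting that $\mu_*(\Delta'-S')=\Delta\ge0$. Your additional remarks on termination and on $\mu_*\Delta'=\Delta$ only make explicit what the paper leaves implicit.
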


\begin{proof}
Take a log resolution $Y\to X$ and let $S_Y$ denote the exceptional locus. Running the $(K_Y+S_Y)$-MMP over $X$ for the dlt pair $(Y,S_Y)$ by \cite{Sh96}, we produce a contraction $\mu\colon X'\to X$ such that $K_{X'}+S'$ is nef over $X$, where $S'$ is the strict transform of $S_Y$. For a pair $(X,\Delta)$ in the statement, $S'-\Delta'=(K_{X'}+S')-\mu^*(K_X+\Delta)$ is nef over $X$ and hence $S'-\Delta'\le0$ from the negativity lemma \cite[lemma~2.19]{Ko+92}.
\end{proof}

The Cartier index is bounded on a log terminal variety. The reader might compare it with Lemma~\ref{lem:class}.

\begin{theorem}[Greb--Kebekus--Peternell \cite{GKP16}]\label{thm:GKP}
Let $X$ be a normal variety which admits a klt pair $(X,\Delta)$. Then there exists a positive integer $r$ such that for every $\bQ$-Cartier divisor $D$ on $X$, the multiple $rD$ is Cartier.
\end{theorem}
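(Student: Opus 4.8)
The plan is to reduce the statement to a uniform bound on local Cartier indices and to control each such index by the order of the local fundamental group, which is finite at a klt singularity. For a closed point $x\in X$ and a $\bQ$-Cartier divisor $D$, let $m_x(D)$ denote the order of the class of $D$ in the local class group $\mathrm{Cl}(\sO_{X,x})$, equivalently the least positive integer with $m_x(D)D$ Cartier near $x$. It suffices to produce an integer $N$ with $m_x(D)\le N$ for all $x$ and all $\bQ$-Cartier $D$ simultaneously: then $m_x(D)$ divides $N!$ at every $x$, so $rD$ is Cartier everywhere for $r=N!$. Thus the problem is local and reduces to bounding $m_x(D)$ independently of $x$ and $D$.

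First I would bound $m_x(D)$ at a single point by the local fundamental group. Put $m=m_x(D)$ and fix on a neighbourhood of $x$ an isomorphism $\sO_X(mD)\cong\sO_X$. The sheaf of algebras $\bigoplus_{i=0}^{m-1}\sO_X(iD)$, with multiplication induced by this isomorphism, has relative spectrum a finite cover $\pi\colon Y\to X$ of degree $m$ which is \'etale over the locus where $D$ is Cartier, in particular over the regular locus $X_{\mathrm{reg}}$; along the support of $D$ no ramification occurs precisely because $\sO_X(mD)$ is trivialised there. Since the class of $D$ has order exactly $m$, the cover $Y$ is connected over the germ at $x$, so it defines a surjection from the local \'etale fundamental group $\pi_1^{\mathrm{loc}}(X_{\mathrm{reg}},x)$ onto $\bZ/m$. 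Hence $m$ divides the order of $\pi_1^{\mathrm{loc}}(X_{\mathrm{reg}},x)$, whence $m_x(D)\le\bigl|\pi_1^{\mathrm{loc}}(X_{\mathrm{reg}},x)\bigr|$ with a bound independent of $D$. Because $(X,\Delta)$ is klt, the theorem of Xu on the finiteness of the local fundamental group of a klt singularity ensures this order is finite at every $x$.

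To make the bound uniform in $x$, I would stratify $X$ into finitely many smooth locally closed subvarieties along which $X$ is locally topologically trivial, as provided by a Whitney stratification. Along each stratum the local fundamental group of $X_{\mathrm{reg}}$ is locally constant, hence constant on each of the finitely many connected components, so it takes only finitely many values, each finite by the previous step. Setting $N$ to be their maximum and $r=N!$ completes the argument. The main obstacle is precisely the finiteness input: proving that a klt singularity has finite local fundamental group is the deep ingredient, resting on the minimal model program, whereas the cyclic-cover dictionary and the constructibility yielding uniformity are comparatively formal.
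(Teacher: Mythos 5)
Your argument is essentially correct over $\bC$, but it is a genuinely different proof from the one in the paper. The paper does not reprove the complex case at all: it quotes it directly from \cite[remark~1.11]{GKP16}, whose own derivation is global rather than local --- Greb--Kebekus--Peternell construct a quasi-\'etale Galois cover $\tilde X\to X$ with the property that every finite cover of $\tilde X$ \'etale in codimension one is \'etale, and boundedness of Cartier indices on $X$ then follows uniformly in terms of the degree of this one cover. The entire content of the paper's proof is instead the transfer of the complex statement to the arbitrary algebraically closed base field $k$ of characteristic zero fixed in Section~2: spread a given sequence $(D_i,x_i)$ out over a countably generated subfield $k_0$, embed $k_0$ into $\bC$, and test Cartier-ness via $\sO_X(rD_i)\otimes k(x_i)\simeq k$. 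Your route reconstructs the complex case from its deeper source: the cyclic-cover dictionary (the local Cartier index $m_x(D)$ is the degree of a connected $\bZ/m$-cover \'etale over the regular locus, hence divides the order of $\pi_1^{\mathrm{loc}}(X_{\mathrm{reg}},x)$), Xu's finiteness theorem for local \'etale fundamental groups of klt singularities, and a Whitney stratification with Thom--Mather local triviality to make the bound uniform in $x$. This is sound, and it is a legitimate alternative to the global argument of \cite{GKP16}; note, however, that what you call the ``comparatively formal'' uniformity step is precisely what the maximal quasi-\'etale cover of \cite{GKP16} is engineered to provide, so in their approach uniformity is a theorem, not a remark, while in yours it is outsourced to stratification theory and to the comparison between topological and \'etale local fundamental groups of germs.

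The one genuine gap concerns the ground field. The statement is asserted over $k$, an arbitrary algebraically closed field of characteristic zero, and the paper applies it over very large fields such as $K$ in Sections~4--5. Whitney stratifications, local topological triviality, the conical structure of germs, and the identification of the \'etale local fundamental group with the profinite completion of the topological one are all complex-analytic ingredients, so your proof as written only yields the theorem over $\bC$. To repair this you must add exactly the reduction that constitutes the paper's proof --- descend to a countably generated subfield and embed it into $\bC$ --- or else replace the stratification step by a purely algebraic uniformity argument. With that supplement your proof is complete.
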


\begin{proof}
The assertion over $\bC$ is in \cite[remark~1.11]{GKP16}. Over $k$, we want to bound the local Cartier index at $x_i$ of $D_i$ for an arbitrary sequence $\{(D_i,x_i)\}_{i\in\bN}$ of pairs of a $\bQ$-Cartier divisor on $X$ and a closed point in $X$. There exists a subfield $k_0$ of $k$ countably generated over $\bQ$ such that $(X,\Delta)$ and all $D_i$ and $x_i$ are defined over $k_0$. Extend the objects over $k_0$ by an embedding of $k_0$ into $\bC$ and apply the assertion over $\bC$. Note that $rD_i$ is Cartier at $x_i$ if and only if $\sO_X(rD_i)\otimes k(x_i)\simeq k$ for the residue field $k(x_i)$ at $x_i$.
\end{proof}

\begin{lemma}\label{lem:reduction}
Theorems~\textup{\ref{thm:main}} to~\textup{\ref{thm:nakamura}} follow from Theorem~\textup{\ref{thm:reduction}}.
\end{lemma}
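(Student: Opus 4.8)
The plan is to invert the equivalence of \cite{K21} and feed it Theorem~\ref{thm:reduction}. By \cite{K21}, on a fixed germ and for a fixed DCC set of coefficients, the ACC for minimal log discrepancies (Theorem~\ref{thm:main}), the ACC for $a$-lc thresholds, the uniform $\fm$-adic semi-continuity (Theorem~\ref{thm:adic}) and Nakamura's boundedness (Theorem~\ref{thm:nakamura}) are all equivalent, so it suffices to prove Nakamura's boundedness with a bound $l$ uniform over the points of $X$. First I would pass from a scheme-theoretic point $\eta$ to a general closed point $x$ of $Z=\overline{\{\eta\}}$ by the relation $\mld_\eta(X,\Delta)=\mld_x(X,\Delta)-\dim Z$, reducing to closed points after restricting to a subsequence with $\dim Z$ constant. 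Then, by Noetherian induction on $X$, either infinitely many of the closed points are general in a positive-dimensional subvariety $W$, in which case $\mld_x=\mld_{\eta_W}+\dim W$ at the generic point $\eta_W$ expresses the invariant through a germ of dimension at most two, where the ACC is classical (the smooth threefold case being \cite{Kaxv}), or the points accumulate on the zero-dimensional stratum, that is, on finitely many fixed closed points. Thus the whole problem reduces to finitely many fixed closed-point germs of a threefold, and the uniform $l$ is the maximum of the finitely many bounds.

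Fix such a germ $x\in X$. To reach the hypotheses of Theorem~\ref{thm:reduction} I would first apply Lemma~\ref{lem:dlt} to obtain a single contraction $\mu\colon X'\to X$, independent of the boundary, with $X'$ $\bQ$-factorial and $S'\le\Delta'$, where $K_{X'}+\Delta'=\mu^*(K_X+\Delta)$; in particular $\Delta'\ge0$. Next I would take a $\bQ$-factorial terminalization $g\colon X''\to X'$ of the \emph{variety} $X'$: since $K_{X''}-g^*K_{X'}$ is anti-effective, the boundary $\Delta''$ defined by $K_{X''}+\Delta''=(\mu g)^*(K_X+\Delta)$ stays effective, so $(X'',\Delta'')$ is crepant to $(X,\Delta)$ with $X''$ $\bQ$-factorial terminal and $\Delta''\ge0$. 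Because $\mu g$ is a fixed morphism, $\mld_x(X,\Delta)=\mld_{(\mu g)^{-1}(x)}(X'',\Delta'')$ and the log discrepancies $a_E(X)$ and $a_E(X'')$ differ by a bounded amount, so it is enough to bound $a_E(X'')$. By Theorem~\ref{thm:GKP} the Cartier index of $X''$ is bounded; absorbing it together with a denominator into a single integer $n$, I would replace the real DCC coefficients of $\Delta''$ by rationals lying in $\frac{1}{n}\bZ$ and invoke Theorem~\ref{thm:reduction} to bound $a_{E_i}(X'')$ for infinitely many members of the sequence.

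The hard part is this last passage from real DCC coefficients to a bounded-denominator rational sequence: rounding the coefficients changes the pair and a priori the divisor computing the minimal log discrepancy, so the delicate point is to carry out the approximation while keeping control of the log discrepancy $a_E(X'')$ of the computing divisor; this, together with exhibiting a single terminal model valid along the entire sequence, is where the argument will concentrate its effort. By contrast the reverse direction is routine: once a computing divisor $E$ has $a_E(X)\le l$ it ranges over a bounded family, so writing $\Delta=\sum_j c_jD_j$ one has $\mld_x(X,\Delta)=a_E(X)-\sum_j c_j\ord_E(D_j)$, where $a_E(X)$ and the non-negative integers $\ord_E(D_j)$ range over finite sets while the $c_j$ lie in the DCC set; such values satisfy the ACC, and the remaining theorems then follow from the equivalence of \cite{K21}.
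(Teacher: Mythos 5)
Your proposal has two genuine gaps, and they sit exactly where the paper's proof does real work.

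First, the reduction to finitely many fixed closed-point germs via ``Noetherian induction'' is not valid. The relation $\mld_x(X,\Delta_i)=\mld_{\eta_W}(X,\Delta_i)+\dim W$ holds only when the closed point $x$ is general in $W$ \emph{with respect to the given boundary} $\Delta_i$; in your setting both the points $x_i$ and the boundaries $\Delta_i$ vary, so nothing forces $x_i$ to be general for $\Delta_i$ (take $W$ a line in $\bA^3$ and $\Delta_i$ a plane through $x_i$: the mld at $x_i$ is unrelated to the mld at $\eta_W$). Nor do infinitely many distinct closed points ``accumulate on finitely many fixed closed points'' --- in the Zariski topology they can be dense in a curve or in $X$ itself. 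The paper avoids this dichotomy entirely by reversing your order of operations: it performs the crepant modifications \emph{globally} on the quasi-projective $X$ (Lemma~\ref{lem:dlt}, then a $\bQ$-factorial terminalisation, with Theorem~\ref{thm:GKP} and the lc condition keeping the new coefficients in a DCC set, namely adding $1$ to $I$ and passing to $r^{-1}I_+$), so that the resulting model is a $\bQ$-factorial \emph{terminal} threefold, smooth outside finitely many points; the known theorems on smooth threefolds \cite{Kaxv}, \cite{K24} handle the smooth locus, and only then does one localise at the finitely many singular points. Your order (localise first, modify afterwards) also breaks down for a second reason: after modifying the germ $x\in X$, the relevant invariant is $\mld_{(\mu g)^{-1}(x)}(X'',\Delta'')$ over a whole fibre, not at a fixed closed point, so Theorem~\ref{thm:reduction} --- a statement at one fixed point --- cannot be invoked; one again needs the smooth case plus finiteness of the singular locus of $X''$.

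Second, you correctly identify the passage from real DCC coefficients to the fixed-denominator hypothesis of Theorem~\ref{thm:reduction} as the delicate point, but you then leave it open (``where the argument will concentrate its effort''), and your suggestion of absorbing denominators into a single integer $n$ cannot work: a DCC set of reals is not contained in $\frac{1}{n}\bZ$ for any $n$. The paper closes this gap not by rounding but by citation to its established machinery: the minimal log discrepancy of $(X,\Delta)$ at $x$ is rewritten as that of a triple with $\fm$-primary $\bR$-ideals $\fa_j=\sO_X(-r\Delta_j)+\fm^l$, the four statements for $\bR$-ideals on a fixed germ are equivalent by \cite[theorem~4.6]{K21} and \cite[proposition~8.1]{Kaxv}, and the reduction from real to rational exponents is \cite[lemma~4.11]{K21}; finally, boundedness for sequences of $\bQ$-ideals is identified with Theorem~\ref{thm:reduction} by replacing an ideal with the divisor of its general member. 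Your closing remark that the converse direction (boundedness implies ACC) is routine is fine, but it is also already part of the quoted equivalence, so it does not substitute for the missing steps above.
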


\begin{proof}
Note that for the theorems, one has only to consider a closed point as $\eta$ by the relation $\mld_\eta(X,\Delta)=\mld_x(X,\Delta)-\dim Z$ mentioned in Section~\ref{sct:prelim}. We may assume the fixed threefold $X$ to be quasi-projective.

Take $\mu\colon X'\to X$ as in Lemma~\ref{lem:dlt}. Consider an arbitrary pair $(X,\Delta)$ and define $\Delta'$ as in the lemma so that $K_{X'}+\Delta'=\mu^*(K_X+\Delta)$. Given $\eta\in X$, there exists a scheme-theoretic point $\eta'\in X'$ over $\eta$ such that $\mld_\eta(X,\Delta)=\mld_{\eta'}(X',\Delta')$. It follows from the inequality $S'\le\Delta'$ that if $(X,\Delta)$ is log canonical at $\eta$, then $\Delta'=\mu_*^{-1}\Delta+S'$ at $\eta'$ for the strict transform $\mu_*^{-1}\Delta$ of $\Delta$ and thus $\mld_\eta(X,\Delta)=\mld_{\eta'}(X',\mu_*^{-1}\Delta+S')$. Hence by replacing $X$ by $X'$ and adding one to $I$, we may assume that $X$ is $\bQ$-factorial and log terminal. Note that $\mu$ is a small $\bQ$-factorialisation if $X$ is log terminal.

Then we take a $\bQ$-factorial terminalisation $\pi\colon Y\to X$, that is, $Y$ is $\bQ$-factorial and admits a terminal pair $(Y,B)$ such that $K_Y+B=\pi^*K_X$. In particular, $Y$ is terminal. Take an integer $r$ supplied for $X$ by Theorem~\ref{thm:GKP}. For every pair $(X,\Delta)$ with $\Delta\in I$, the pair $(Y,B+\pi^*\Delta)$ is crepant to it and the coefficients in $\pi^*\Delta$ belong to the set $r^{-1}I_+=\{r^{-1}\sum_i\delta_i\mid\delta_i\in I\}$ which satisfies the DCC\@. Replacing $X$ by $Y$ and $I$ by the union $K\cup r^{-1}I_+\cup(K+r^{-1}I_+)$ for the set $K$ of the coefficients in $B$, we may assume that $X$ is terminal.

The $\bQ$-factorial terminal threefold $X$ is smooth outside finitely many points. Since we know the four statements on smooth threefolds \cite{K24}, the statements on $X$ follow from those on the germ of $X$ at each of the finitely many points. Hence we may assume the threefold to be the fixed germ $x\in X$ of a $\bQ$-factorial terminal threefold.

We have confirmed the equivalence of the ACC for minimal log discrepancies, the ACC for $a$-lc thresholds, the uniform $\fm$-adic semi-continuity and Nakamura's boundedness for pairs with $\bR$-ideals on a fixed germ \cite[theorem~4.6]{K21}, \cite[proposition~8.1]{Kaxv}. Let $\fm$ denote the maximal ideal in $\sO_X$ defining $x$. Take an integer $r$ as in Theorem~\ref{thm:GKP}. For an $\bR$-divisor $\Delta=\sum_j\delta_j\Delta_j$ on $X$ with $\delta_j\in I$, the minimal log discrepancy $\mld_x(X,\Delta)$ coincides with $\mld_x(X,\fa)$ for the $\bR$-ideal $\fa=\prod_j\fa_j^{\delta_j/r}$ with $\fm$-primary ideals $\fa_j=\sO_X(-r\Delta_j)+\fm^l$ for a sufficiently large integer $l$, where $r\Delta_j$ is Cartier. Through the equivalence mentioned above together with the reduction \cite[lemma~4.11]{K21} to the case of rational coefficients, we complete the reduction of Theorems~\ref{thm:main} to~\ref{thm:nakamura} to Theorem~\ref{thm:reduction}. Note that the boundedness in Theorem~\ref{thm:reduction} is equivalent to that for a sequence of $\bQ$-ideals, since $\mld_x(X,\fa^{1/n})$ for an $\fm$-primary ideal $\fa$ in $\sO_X$ equals $\mld_x(X,n^{-1}A)$ for the divisor $A$ defined by the general member of $\fa$.
\end{proof}

We used the log terminal assumption in Theorem~\ref{thm:nakamura} in controlling the divisors that compute $\mld_\eta(X,\Delta)=-\infty$. Even if $X$ is not log terminal, the boundedness holds for $\Delta$ such that $\mld_\eta(X,\Delta)\ge0$.

\section{The generic limit of functions}\label{sct:limit}
We adopt the notion of the generic limit of functions in order to stay with $\bQ$-divisors. In fact, the generic limit was originally constructed for functions \cite{Koaxv}. We retain our style in \cite{K21} and \cite{Kaxv} by minimal modification. We review the theory briefly and refer the reader to \cite[sections~3,~4]{K21} for details.

Fix the germ $x\in X$ of a variety and let $\fm$ denote the maximal ideal in $\sO_X$ defining $x$. Consider an infinite sequence $\cS=\{f_i\}_{i\in\bN}$ of functions in $\sO_X$. Using Hilbert schemes, one can construct a \textit{family} $\cF=(Z_l,f(l),N_l,s_l,t_l)_{l\ge l_0}$ \textit{of approximations} of $\cS$ \cite[definition~3.1]{K21}. Here $f(l)$ is a function in $\sO_{X\times Z_l}$ compatible with a dominant morphism $t_l\colon Z_{l+1}\to Z_l$ of varieties in the sense that $t_l^*f(l)-f(l+1)$ belongs to $\fm^l\sO_{X\times Z_{l+1}}$, and $s_l\colon N_l\to Z_l(k)$ is a map from an infinite subset of $\bN$ to the set of $k$-points in $Z_l$ with dense image, satisfying $N_{l+1}\subset N_l$ and $t_l\circ s_{l+1}=s_l|_{N_{l+1}}$. The function $f(l)_i=f(l)|_{X\times s_l(i)}$ in $\sO_X$ given by the point $s_l(i)\in Z_l$ approximates $f_i$ in the sense that $f(l)_i-f_i$ belongs to $\fm^l$.

Define the field $K$ to be the algebraic closure of the direct limit $\varinjlim_lk(Z_l)$ of the function field $k(Z_l)$ of $Z_l$. Unlike our previous formulation, we take the algebraic closure as a matter of convenience. Let $\hat X$ be the spectrum of the completion of the local ring $\sO_{X,x}\otimes_kK$ and let $\hat x$ denote the closed point in $\hat X$. The \textit{generic limit} of $\cS$ with respect to $\cF$ is the function $\hat f$ in $\sO_{\hat X}$ defined by the inverse limit
\[
\hat f=\varprojlim_lf(l).
\]

Suppose that $X$ is normal and fix a positive integer $n$. Each function $f_i$ defines an effective Cartier divisor $D_i$ on $X$ whilst $\hat f$ defines an effective Cartier divisor $\hat D$ on $\hat X$. The $\bQ$-divisor $\hat\Delta=n^{-1}\hat D$ is the \textit{generic limit} of the sequence of $\bQ$-divisors $\Delta_i=n^{-1}D_i$ on $X$. The notions of singularities in Section~\ref{sct:prelim} make sense on $\hat X$ \cite{dFEM11}.

The regular morphism $\hat x\in\hat X\to x\in X$ factors through the base change $x_K\in X_K$ of $x\in X$ by $\Spec K\to\Spec k$. The intermediate germ $x_K\in X_K$ is defined over an algebraically closed field $K$ and thus one can treat it without any special care. On the other hand, even if there exists a good birational model $Y_K\to X_K$, it does not always \textit{descend} to a model $Y_l\to X\times Z_l$ in the sense that $Y_K=Y_l\times_{Z_l}\Spec K$ but only to a model $Y_l'\to X\times Z_l'$ after a quasi-finite extension $Z_l'\to Z_l$. For this reason, we need to modify the notion of a \textit{subfamily} $\cF'=(Z'_l,f'(l),N'_l,s'_l,t'_l)_{l\ge l'_0}$ in \cite[definition~3.5]{K21} in such a manner that the compatible morphism $Z'_l\to Z_l$ is only a quasi-finite dominant morphism. We shall often replace $\cF$ by a subfamily and accordingly replace $\cS$ by an infinite subsequence.

Suppose that the normal variety $X$ is $\bQ$-Gorenstein. The ACC for minimal log discrepancies on the fixed germ $x\in X$ is equivalent to the equality $\mld_{\hat x}(\hat X,\hat\Delta)=\mld_x(X,\Delta_i)$.

\begin{lemma}[{\cite[lemma~4.7]{K21}}]\label{lem:limit}
If the equality $\mld_{\hat x}(\hat X,\hat\Delta)=\mld_x(X,\Delta_i)$ holds for all $i\in N_{l_0}$, then after replacement of $\cF$ by a subfamily, there exists a rational number $l$ such that for every $i\in N_{l_0}$ there exists a divisor $E_i$ over $X$ which computes $\mld_x(X,\Delta_i)$ and has $a_{E_i}(X)=l$.
\end{lemma}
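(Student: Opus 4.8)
The plan is to manufacture the divisors $E_i$ by spreading out and specialising a \emph{single} divisor that computes the minimal log discrepancy of the generic limit; the point is that the common log discrepancy with respect to $X$ of that divisor is then inherited by every member of the sequence. First I would fix a divisor $\hat E$ over $\hat X$ computing $\mld_{\hat x}(\hat X,\hat\Delta)$, which exists since $\hat X$ is $\bQ$-Gorenstein, and put $l=a_{\hat E}(\hat X)$ and $c=\ord_{\hat E}\hat f$. As $rK_{\hat X}$ is Cartier for the index $r$ of $X$, the discrepancy $d_{\hat E}(\hat X)=\ord_{\hat E}(K-\pi^*K)$ lies in $r^{-1}\bZ$, so $l$ is a rational number, and
\[
\mld_{\hat x}(\hat X,\hat\Delta)=a_{\hat E}(\hat X,\hat\Delta)=a_{\hat E}(\hat X)-n^{-1}\ord_{\hat E}\hat f=l-n^{-1}c .
\]

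The core step is to realise $\hat E$ at a finite level of the family. Since $c_{\hat X}(\hat E)=\hat x$ we have $\ord_{\hat E}\fm\ge1$, whence $\ord_{\hat E}\fm^{l_2}\ge l_2$ for every $l_2$. Choosing a level $l_2>c$ and using that $\hat f-f(l_2)\in\fm^{l_2}\sO_{\hat X}$, I obtain $\ord_{\hat E}f(l_2)=c$; thus $\hat E$ sees $\hat f$ only through the finite jet $f(l_2)$, which is algebraic over the intermediate germ $x_K\in X_K$. Exploiting the regularity of the completion $\hat X\to X_K$, one transports $\hat E$ to a divisor $E_K$ over $X_K$ with $a_{E_K}(X_K)=l$ and $\ord_{E_K}f(l_2)=c$. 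Because $K$ is the algebraic closure of $\varinjlim_l k(Z_l)$, the divisor $E_K$ is defined over a finite extension of $k(Z_{l_2})$ after enlarging $l_2$ if necessary; replacing $\cF$ by a subfamily along whose defining quasi-finite dominant base change $E_K$ spreads out, I obtain a divisor $\mathcal E$ over $X\times Z_{l_2}$, defined over a dense open of the base along which the fibrewise centre is $x$ and the fibrewise order of $f(l_2)$ is $c$. Shrinking the base and restricting the index map, and re-basing the subfamily at level $l_2$, I may assume $\mathcal E$ is defined along $s_{l_2}(i)$ for every $i\in N_{l_0}$.

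It then remains to specialise. For every such $i$, the restriction of $\mathcal E$ to the fibre $X=X\times s_{l_2}(i)$ is a divisor $E_i$ over $X$ with $c_X(E_i)=x$. Since $X$, and hence $K_X$, is constant along the family, the relative discrepancy $\ord_{E_i}(K-\pi^*K)$ is constructible on the base, so it is constant on the dense open, equal to its generic value $l-1$; thus $a_{E_i}(X)=l$. On the other hand $\ord_{E_i}f(l_2)_i=c$, and as $f(l_2)_i-f_i\in\fm^{l_2}$ with $\ord_{E_i}\fm^{l_2}\ge l_2>c$, we get $\ord_{E_i}f_i=c$. Therefore
\[
a_{E_i}(X,\Delta_i)=a_{E_i}(X)-n^{-1}\ord_{E_i}f_i=l-n^{-1}c=\mld_{\hat x}(\hat X,\hat\Delta)=\mld_x(X,\Delta_i),
\]
the last equality being the hypothesis, so that $E_i$ computes $\mld_x(X,\Delta_i)$ and has the prescribed value $a_{E_i}(X)=l$.

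I expect the main obstacle to be the descent in the second paragraph: passing from a divisor on the completion $\hat X$, where the generic limit genuinely lives, down to a finite level $X\times Z_{l_2}$. What makes it possible, and must be argued with care, is that the finiteness $\ord_{\hat E}\hat f=c<l_2$ renders $\hat E$ insensitive to the infinite tail of $\hat f$, so that $\hat E$ is captured by finite-level data and both its discrepancy $l$ and its order $c$ descend to $E_K$; and the return from the algebraically closed field $K$ to a function field $k(Z_{l_2})$ is exactly what forces the compatible morphisms of subfamilies to be merely quasi-finite, as anticipated in the modified notion of subfamily.
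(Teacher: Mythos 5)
Your overall architecture --- compute the invariants of a divisor at a finite level of the family, spread it out over a quasi-finite base change (the modified subfamilies), specialise to the fibres, and close with the hypothesis --- is exactly that of the proof the paper cites ([K21, lemma~4.7]). But the step carrying all the weight, the ``transport'' of $\hat E$ to a divisor $E_K$ over $X_K$ with \emph{both} $a_{E_K}(X_K)=l$ and $\ord_{E_K}f(l_2)=c$, is asserted rather than proved, and it is not a formal consequence of the regularity of $\hat X\to X_K$. What regularity actually provides (via [dFEM11], itself a nontrivial theorem) is invariance of minimal log discrepancy \emph{values} for data pulled back from $X_K$; it does not attach to an individual divisor over $\hat X$ with centre $\hat x$ a divisor over $X_K$ with the same pair of invariants. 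Moreover, the values-only invariance is genuinely too weak for your set-up: applied to the pair $(X_K,n^{-1}D(l_2)_K)$, where $D(l_2)_K$ is the divisor of $f(l_2)$, it yields a divisor computing $\mld_{x_K}(X_K,n^{-1}D(l_2)_K)$, but this finite-level minimal log discrepancy can be strictly smaller than $\mld_{\hat x}(\hat X,\hat\Delta)$ (the truncation can be more singular than the limit), and its computing divisors can have $\ord f(l_2)\ge l_2$, in which case your final perturbation step ($\ord_{E_i}f_i=\ord_{E_i}f(l_2)_i$) collapses. So your argument really does require the stronger statement that $\hat E$ itself descends --- which is true, but needs a genuine proof (for instance: all centres of $\ord_{\hat E}$ on successive blow-ups lie in the fibres over the closed point; ideals of such centres, the blow-ups along them, and the normalisations are unchanged by the completion, so they descend to $X_K$, and Zariski's iteration realises $\hat E$ after finitely many such steps). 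Invoking ``the regularity of the completion'' names the difficulty without resolving it; compare the paper's own caution that a contraction $\hat Y\to\hat X$ need not descend to $X_K$.

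The standard repair, and the route of the cited proof, is to change the finite-level datum: replace the divisor of $f(l_2)$ by the $\fm$-primary ideal $\fb=f(l_2)\sO_{\hat X}+\hat\fm^{l_2}$. Then $\ord_F\fb=\min\{\ord_Ff(l_2),\,l_2\ord_F\hat\fm\}\le\ord_F\hat f$ for every divisor $F$ with centre $\hat x$, so $\mld_{\hat x}(\hat X,\fb^{1/n})\ge\mld_{\hat x}(\hat X,\hat\Delta)$, while testing with $\hat E$ (using $l_2>c$) gives the reverse inequality; hence the triple has minimal log discrepancy exactly $\mld_{\hat x}(\hat X,\hat\Delta)$. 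Now the values-only invariance under $\hat X\to X_K$ suffices to produce a computing divisor $E_K$ over $X_K$, which you spread out and specialise precisely as in your third paragraph. The conclusion then needs only one-sided estimates: from $f_i\sO_X+\fm^{l_2}=f(l_2)_i\sO_X+\fm^{l_2}$ one gets $\ord_{E_i}f_i\ge\ord_{E_i}(f_i\sO_X+\fm^{l_2})$, hence $a_{E_i}(X,\Delta_i)\le\mld_{\hat x}(\hat X,\hat\Delta)=\mld_x(X,\Delta_i)$, and the reverse inequality is automatic because $c_X(E_i)=x$. This bypasses the transport of a specific divisor entirely, which is exactly why the theory works with ideals of the form $\fa+\fm^l$ throughout.
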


The achievement on log canonicity stated as Theorem~\ref{thm:dFEM} settles the above equality when $\mld_{\hat x}(\hat X,\hat\Delta)$ is not positive. The equality also holds when $(\hat X,\hat\Delta)$ is klt \cite[theorem~5.1]{K15}, the proof of which works even if $X$ is singular.

\begin{theorem}[de Fernex--Ein--Musta\c{t}\u{a} \cite{dFEM10}, \cite{dFEM11}]\label{thm:dFEM}
If $(\hat X,\hat\Delta)$ is log canonical, then so is $(X,\Delta_i)$ for all $i\in N_{l_0}$ after replacement of $\cF$.
\end{theorem}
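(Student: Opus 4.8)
The plan is to translate log canonicity into the language of log canonical thresholds and then to exploit the constructibility of the threshold in the family $\cF$. Writing $\hat\Delta=n^{-1}\hat D$, the pair $(\hat X,\hat\Delta)$ is log canonical precisely when $\mathrm{lct}_{\hat x}(\hat X,\hat D)\ge1/n$, and likewise $(X,\Delta_i)$ is log canonical precisely when $\mathrm{lct}_x(X,D_i)\ge1/n$. It therefore suffices to produce, after replacement of $\cF$, an infinite index set on which $\mathrm{lct}_x(X,D_i)=\mathrm{lct}_{\hat x}(\hat X,\hat D)$. I emphasise that semicontinuity of the threshold in a family only delivers the inequality $\mathrm{lct}_x(X,D_i)\le\mathrm{lct}_{\hat x}(\hat X,\hat D)$, which is the wrong direction for descending log canonicity from the generic limit to the members; the equality on a dense set is exactly what the constructibility supplies.

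First I would fix a level $l$ and let $D(l)_z$ denote the effective divisor on the fixed base $X$ cut out by $f(l)|_{X\times z}$ for a point $z\in Z_l$. By the theory of multiplier ideals and log canonical thresholds on the $\bQ$-Gorenstein variety $X$ established in \cite{dFEM10} and \cite{dFEM11}, the function $z\mapsto\mathrm{lct}_x(X,n^{-1}D(l)_z)$ is constructible on $Z_l$; since $Z_l$ is a variety, it is constant, equal to its generic value $c_l$, on a dense open subset $U_l\subseteq Z_l$. As $s_l\colon N_l\to Z_l(k)$ has dense image, the open $U_l$ contains $s_l(i)$ for infinitely many $i$, and writing $D(l)_i=D(l)_{s_l(i)}$ one has $\mathrm{lct}_x(X,n^{-1}D(l)_i)=c_l$ for each such $i$.

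It remains to pass from the level-$l$ data to the members $D_i$ and to the generic limit $\hat D$. For this I would invoke the uniform $\fm$-adic truncation of the log canonical threshold \cite{dFEM10,dFEM11}: because all thresholds in question are bounded below by the fixed number $1/n$, there is an integer $l$, depending only on $\dim X$ and on $1/n$, such that $\mathrm{lct}_x(X,n^{-1}(\,\cdot\,))$ is unchanged when the defining function is altered modulo $\fm^l$. Since $f(l)_i-f_i\in\fm^l$ by construction, and $\hat f=\varprojlim_lf(l)$ agrees modulo $\fm^l$ with the generic fibre of $f(l)$ over $K$, this single level identifies $c_l$ simultaneously with $\mathrm{lct}_x(X,D_i)$ for the chosen members and with $\mathrm{lct}_{\hat x}(\hat X,\hat D)$ for the generic limit. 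Replacing $\cF$ by the subfamily over $U_l$---after the quasi-finite base change of $Z_l$ needed to descend the ambient model, as discussed in Section~\ref{sct:limit}---and replacing $\cS$ by the infinite subsequence indexed by $\{i\mid s_l(i)\in U_l\}$, every remaining member satisfies $\mathrm{lct}_x(X,D_i)=\mathrm{lct}_{\hat x}(\hat X,\hat D)\ge1/n$ and is thus log canonical.

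The main obstacle is the interplay between constructibility and the passage to the limit over the algebraically closed field $K$. Constructibility of the threshold on the fixed, possibly singular base $X$ rests on the multiplier-ideal machinery of \cite{dFEM10} and \cite{dFEM11}, and one must verify that the generic value $c_l$ of the level-$l$ family genuinely coincides with the threshold of $\hat\Delta$ after completion and base change. The delicate point is that a single truncation level $l$ must control the approximation error uniformly across the three objects---the members $f_i$, the family functions $f(l)_i$, and the limit $\hat f$---and it is precisely the uniform $\fm$-adic semicontinuity, valid because of the a priori bound $1/n$, that makes this possible.
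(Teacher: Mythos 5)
The paper does not actually prove this statement---it is quoted as a theorem of de Fernex--Ein--Musta\c{t}\u{a}, and the genuine content lies in \cite{dFEM10}, \cite{dFEM11}. Your first two steps (recasting log canonicity as $\operatorname{lct}_x(X,n^{-1}D_i)\ge 1/n$, and using spreading-out/constructibility to equate the threshold of the geometric generic fibre of $f(l)$ with that of $f(l)_i$ for all $i$ with $s_l(i)$ in a dense open subset of $Z_l$) are correct and are indeed ingredients of the real argument. The fatal gap is your third step: the ``uniform $\fm$-adic truncation'' you invoke---an integer $l$ depending only on $\dim X$ and $1/n$ such that $\operatorname{lct}_x(X,n^{-1}(\,\cdot\,))$ is \emph{unchanged} under alteration of the defining function modulo $\fm^l$---is false. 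For example, on $\bA^3$ the functions $f=x_1^3+x_2^3+x_3^N$ and $g=x_1^3+x_2^3$ agree modulo $\fm^N$, both have log canonical threshold bounded below by $2/3$, yet their thresholds are $2/3+1/N$ and $2/3$, which differ for every $N$. What \cite{dFEM10}, \cite{dFEM11} actually provide is the quantitative estimate $\abs{\operatorname{lct}_x(f)-\operatorname{lct}_x(g)}\le\dim X/l$ whenever $f\equiv g \bmod \fm^l$; feeding that into your argument yields only $\operatorname{lct}_x(X,D_i)\ge 1/n-2\dim X/l$ for the indices $i\in N_l$, and since each fixed index $i$ survives only up to a finite level $l$, you obtain approximate log canonicity of the members, never the exact inequality $\ge 1/n$.

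Passing from this approximate statement to exact log canonicity is precisely the hard content of the theorem. In de Fernex--Ein--Musta\c{t}\u{a}'s work it is achieved not by a uniform truncation but by combining the estimate above with their ACC theorem for log canonical thresholds on a fixed (bounded-singularity) germ: if infinitely many members failed to be log canonical, their thresholds would lie in $[1/n-2\dim X/l,\,1/n)$ for arbitrarily large $l$, producing a strictly increasing sequence of thresholds accumulating at $1/n$ from below, contradicting the ACC. (Alternatively one can argue through the lc centres of $(\hat X,\hat\Delta)$ and multiplier ideals.) Note also that uniform $\fm$-adic statements of the kind you postulate are not available inputs in this subject: they are of the same depth as the conjectures this line of work aims to prove---compare Theorem~\ref{thm:adic} of the present paper, which is exactly such a uniformity statement for threefolds and is one of its main results, not a lemma one may quote. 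So your proposal is circular at its key step: the one claim doing the real work is either false (as stated, for the value of the threshold) or is a theorem whose known proofs already contain the result you are trying to establish.
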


Henceforth we shall assume $x\in X$ to be the germ of a $\bQ$-factorial terminal threefold. The germ $x_K\in X_K$ is also $\bQ$-factorial as well as terminal. Indeed, take a small $\bQ$-factorialisation $X_K'\to X_K$, which descends to a small projective morphism $X_l'\to X\times Z_l$ after replacement of $\cF$. The fibre $X_{lz}'\to X$ at the general point $z$ in $Z_l$ is a small contraction to a $\bQ$-factorial threefold and thus it is an isomorphism. It follows that the geometric generic fibre $X_K'\to X_K$ is also an isomorphism.

In our setting, we have the equality in Lemma~\ref{lem:limit} unless $(\hat X,\hat\Delta)$ is log canonical and has the smallest lc centre $\hat C$ of dimension one, which is regular. Indeed, as remarked after that lemma, it suffices to discuss the case when $(\hat X,\hat\Delta)$ is not klt with $\mld_{\hat x}(\hat X,\hat\Delta)>0$. Then, following the argument in \cite[section~1]{Ka97} with the vanishing theorem due to Murayama \cite{Muapp}, or adjusting the proof of \cite[theorem~1.2]{K15}, one can prove that $(\hat X,\hat\Delta)$ has the smallest lc centre and that it is normal. Since $\mld_{\hat x}(\hat X,\hat\Delta)$ is positive, the smallest lc centre is either a regular curve or a normal surface. If it is a surface, then the equality in the lemma is derived from a minor modification of \cite[theorem~5.3]{K15}. In fact, the proof of Theorem~\ref{thm:success} works in disregard of $\hat C$.

We extend the result \cite[proposition~6.1]{K15} to a terminal threefold.

\begin{lemma}\label{lem:LM}
Notation and assumptions as above, where $x\in X$ is the germ of a $\bQ$-factorial terminal threefold. Suppose that $(\hat X,\hat\Delta)$ is log canonical and has the smallest lc centre $\hat C$ of dimension one. Then $\mld_{\hat x}(\hat X,\hat\Delta)\le1$.
\end{lemma}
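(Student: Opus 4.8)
The plan is to exhibit a single divisor over $\hat X$ with centre $\hat x$ whose log discrepancy is at most one, by cutting the one-dimensional lc centre $\hat C$ down to the point $\hat x$ with a general hyperplane section and then invoking inversion of adjunction. Let $\hat{\fm}$ denote the maximal ideal in $\sO_{\hat X}$ defining $\hat x$. Since $\hat X$ is terminal, hence $\bQ$-Gorenstein, a general function $g\in\hat{\fm}$ cuts out a Cartier divisor $\hat H$ on $\hat X$; as a general hyperplane section of a terminal (compound Du Val) threefold, $\hat H$ is a normal surface with at worst canonical singularities. Because $\hat C$ is a regular curve through $\hat x$, choosing $g$ generally I may assume that $\hat H$ meets $\hat C$ transversally, so that $\hat H\cap\hat C=\{\hat x\}$ as a reduced point near $\hat x$, and that $\hat H$ carries no other lc centre of $(\hat X,\hat\Delta)$ through $\hat x$.

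First I would record that, $\hat H$ being a general Cartier hyperplane section, adjunction gives an effective boundary $\hat\Delta|_{\hat H}$ with $K_{\hat H}+\hat\Delta|_{\hat H}=(K_{\hat X}+\hat\Delta)|_{\hat H}$, that $(\hat H,\hat\Delta|_{\hat H})$ is again log canonical, and that its lc centres are exactly the intersections $W\cap\hat H$ for the lc centres $W$ of $(\hat X,\hat\Delta)$. Since $\hat C$ is an lc centre of $(\hat X,\hat\Delta)$, the reduced point $\hat C\cap\hat H=\{\hat x\}$ is then an lc centre of $(\hat H,\hat\Delta|_{\hat H})$, and therefore $\mld_{\hat x}(\hat H,\hat\Delta|_{\hat H})=0$.

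It then remains to compare the minimal log discrepancies on $\hat X$ and on the section through the restriction inequality $\mld_{\hat x}(\hat X,\hat\Delta)\le\mld_{\hat x}(\hat H,\hat\Delta|_{\hat H})+1$, valid for a general hyperplane through $\hat x$. Starting from a divisor $E$ over $\hat H$ with $c_{\hat H}(E)=\{\hat x\}$ computing $\mld_{\hat x}(\hat H,\hat\Delta|_{\hat H})$, I would let $\hat H$ vary among the hyperplane sections through $\hat x$ and take the divisor over $\hat X$ swept out by $E$, whose log discrepancy with respect to $(\hat X,\hat\Delta)$ exceeds $a_E(\hat H,\hat\Delta|_{\hat H})$ by exactly one coming from the extra fibre direction; its centre is $\hat x$. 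Combining this with the previous step yields $\mld_{\hat x}(\hat X,\hat\Delta)\le 0+1=1$, as desired.

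The main obstacle is that every one of these operations---the choice of a general hyperplane section, the Bertini-type preservation of log canonicity, the description of the lc centres of the restriction, and the sweeping-out construction of the comparison divisor---must be performed on $\hat X$, which is the spectrum of a complete local ring over the large algebraically closed field $K$ rather than a variety of finite type. I would control this either by descending the section $\hat H$ and the relevant divisors to a model over $X\times Z_l$ after replacing $\cF$ by a subfamily, thereby reducing to the corresponding statements on genuine varieties, or by transferring them directly through the comparison of singularities on $\hat X$ with those on the intermediate base change $x_K\in X_K$ in the manner of \cite{dFEM11}; the genericity required for $g$ is available because $K$ is uncountable. The regularity of $\hat C$ assumed in the hypothesis is precisely what secures the clean transversal intersection $\hat H\cap\hat C=\{\hat x\}$ on which the argument rests.
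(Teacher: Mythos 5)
Your strategy breaks at both of its key steps, and for the same underlying reason: the linear system of hyperplane sections through the fixed point $\hat x$ has a base point at $\hat x$, so no Bertini-type theorem controls the restricted pair there. Concretely, take $X=\bA^3$ at the origin and $\hat\Delta=S=\{x_1^2=x_2^2x_3\}$ (the Whitney umbrella) with coefficient one. Blowing up the $x_3$-axis $\hat C$ gives a crepant snc model, from which one reads off that $(\hat X,\hat\Delta)$ is log canonical, that its lc centres are exactly $S$ and $\hat C$ (so the smallest lc centre is the regular curve $\hat C$ and the hypotheses of the lemma hold), and that $\mld_{\hat x}(\hat X,\hat\Delta)=1$. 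But for \emph{every} plane $\hat H$ through $\hat x$ the restriction $\hat\Delta|_{\hat H}$ has multiplicity at least two at $\hat x$, and for a general such plane it is an $A_2$ cusp with coefficient one, whose log canonical threshold is $5/6$; hence $(\hat H,\hat\Delta|_{\hat H})$ is \emph{not} log canonical at $\hat x$, and $\hat x$ is not an lc centre of it. So your first claim (the restriction is lc with lc centres $W\cap\hat H$) fails, and your second claim, the inequality $\mld_{\hat x}(\hat X,\hat\Delta)\le\mld_{\hat x}(\hat H,\hat\Delta|_{\hat H})+1$, would read $1\le-\infty+1$ in this example.

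The sweeping construction cannot repair the inequality, because its output is not what you claim: if you move $\hat H$ in a general pencil with axis $\ell$ and sweep a divisor $E$ over $\hat H$ with centre $\hat x$, the swept divisor $\mathcal{E}$ over $\hat X$ satisfies $a_{\mathcal{E}}(\hat X,\hat\Delta)=a_E(\hat H,\hat\Delta|_{\hat H})+\ord_E\ell$, and $\ord_E\ell$ equals the order along $E$ of the maximal ideal at $\hat x$, which is at least one but in general larger; the ``extra fibre direction'' contributes $\ord_E\hat\fm$, not $1$. In the example above, every divisor over $\hat H$ with non-positive log discrepancy has $\ord_E\hat\fm\ge2$ (for instance the $(2,3)$-weighted blow-up of the cusp has $a_E=-1$ and $\ord_E\hat\fm=2$, so it sweeps out to log discrepancy $1$, not $0$), which is exactly why $\mld_{\hat x}=1$ is not contradicted and exactly why your argument produces no bound. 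A correct repair would require a divisor $E$ over $\hat H$ with centre $\hat x$ and $a_E(\hat H,\hat\Delta|_{\hat H})+\ord_E\hat\fm\le1$, which adjunction does not supply and which is essentially as hard as the lemma itself. The paper's proof avoids restriction entirely: using the MMP for excellent schemes of Lyu--Murayama (needed precisely because $\hat X$ is not of finite type), it takes a $\bQ$-factorial terminalisation of $(\hat X,(1-\varepsilon)\hat\Delta)$, which extracts a divisor $\hat F$ that is an lc place of $(\hat X,\hat\Delta)$ over the generic point of $\hat C$, and then blows up a curve $\hat l\subset\hat F$ lying over $\hat x$; the resulting divisor has log discrepancy $2-\ord_{\hat E}(\hat F+\hat B)\le1$ with respect to $(\hat X,\hat\Delta)$.
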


\begin{proof}
Thanks to Lyu and Murayama \cite{LMaxv}, we have a $\bQ$-factorial terminalisation $\hat\pi\colon\hat Y\to\hat X$ of the klt pair $(\hat X,(1-\varepsilon)\hat\Delta)$ for a small positive rational number $\varepsilon$. It extracts a divisor $\hat F$ which computes $\mld_{\hat\eta}(\hat X,\hat\Delta)=0$ at the generic point $\hat\eta$ of $\hat C$. Write $K_{\hat Y}+\hat F+\hat B=\hat\pi^*(K_{\hat X}+\hat\Delta)$ with an effective $\bQ$-divisor $\hat B$. For a curve $\hat l$ in $\hat F$ mapped to $\hat x$, the terminal threefold $\hat Y$ is regular at the generic point $\hat\eta_l$ of $\hat l$ and the divisor $\hat E$ obtained at $\hat\eta_l$ by the blow-up of $\hat Y$ along $\hat l$ satisfies
\[
\mld_{\hat x}(\hat X,\hat\Delta)\le a_{\hat E}(\hat X,\hat\Delta)=a_{\hat E}(\hat Y,\hat F+\hat B)=2-\ord_{\hat E}(\hat F+\hat B)\le1.
\qedhere
\]
\end{proof}

In the remaining case, we shall construct a contraction $\pi_K\colon Y_K\to X_K$ from a $\bQ$-factorial terminal threefold which is isomorphic outside $x_K$. Let $\hat\pi\colon\hat Y\to\hat X$ denote the base change and let $\hat y\in\hat Y$ denote the point above $\hat x$ that lies on the strict transform of $\hat C$. The point $\hat y$ descends to a point $y_K\in Y_K$. Replacing $\cF$, $\pi_K$ together with $y_K$ descends to a contraction $\pi_l\colon Y_l\to X\times Z_l$ endowed with a section $y_l\subset Y_l$ of $Y_l\to Z_l$ such that for all $i$ the fibre $\pi_i\colon Y_i\to X$ at $s_l(i)\in Z_l$ is a contraction from a terminal threefold endowed with a point $y_i\in Y_i$. Every exceptional prime divisor $E_K$ on $Y_K$ also descends to a divisor $E_l$ on $Y_l$. We may assume descent to be compatible with base change, so that $\pi_{l+1}$ is the base change of $\pi_l$ by $t_l$ and hence the fibre $\pi_i$ is independent of $l$ as far as $i\in N_l$. We may further assume that $a_{\hat E}(\hat X)=a_{E_i}(X)$ and $\ord_{\hat E}\hat\Delta=\ord_{E_i}\Delta_i$ for the divisor $\hat E=E_K\times_{X_K}\hat X$ on $\hat Y$ and the divisor $E_i=E_l\times_{Z_l}s_l(i)$ on $Y_i$, which is prime as $K$ is algebraically closed.

One should note that $\hat X$ may not be $\bQ$-factorial as observed in \cite[example~1.2.3]{K24}. In contrast to \cite[proposition~A.7]{K15}, a contraction $\hat Y\to\hat X$ isomorphic outside $\hat x$ may not descend to $Y_K\to X_K$.

\begin{theorem}\label{thm:success}
Notation and assumptions as above, where $x\in X$ is the germ of a $\bQ$-factorial terminal threefold and $(\hat X,\hat\Delta)$ is log canonical and has the smallest lc centre $\hat C$ of dimension one. Given a contraction $\pi_K\colon Y_K\to X_K$ as above, we define the $\bQ$-divisors $\hat\Gamma$ on $\hat Y$ and $\Gamma_i$ on $Y_i$ by the equalities $K_{\hat Y}+\hat\Gamma=\hat\pi^*(K_{\hat X}+\hat\Delta)$ and $K_{Y_i}+\Gamma_i=\pi_i^*(K_X+\Delta_i)$ respectively. Suppose that $\hat\Gamma$ is effective and thus so is $\Gamma_i$. Then after replacement of $\cF$ by a subfamily, $\mld_x(X,\Delta_i)$ equals $\mld_{\hat x}(\hat X,\hat\Delta)$ or $\mld_{y_i}(Y_i,\Gamma_i)$ for all $i$.
\end{theorem}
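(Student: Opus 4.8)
The plan is to rewrite every minimal log discrepancy as a minimal log discrepancy over the exceptional fibre and then to exploit that, away from the distinguished point, the family is rigid. Since $\pi_i\colon Y_i\to X$ is an isomorphism outside $x$ and $(Y_i,\Gamma_i)$ is crepant to $(X,\Delta_i)$, a divisor $E$ over $X$ with $c_X(E)=\{x\}$ is the same as a divisor over $Y_i$ with $c_{Y_i}(E)\subset W_i:=\pi_i^{-1}(x)$, and the two log discrepancies agree; hence $\mld_x(X,\Delta_i)=\mld_{W_i}(Y_i,\Gamma_i)$, and likewise $\mld_{\hat x}(\hat X,\hat\Delta)=\mld_{\hat W}(\hat Y,\hat\Gamma)$ with $\hat W:=\hat\pi^{-1}(\hat x)$. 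As $\{y_i\}\subset W_i$ is closed, this gives at once the right-hand bound $\mld_x(X,\Delta_i)\le\mld_{y_i}(Y_i,\Gamma_i)$. For the left-hand bound I would use that a divisor over $X$ centred at $x$ base changes to a divisor over $\hat X$ centred at $\hat x$ of the same log discrepancy against $\hat\Delta$, by the matching $a_{\hat E}(\hat X)=a_{E_i}(X)$ and $\ord_{\hat E}\hat\Delta=\ord_{E_i}\Delta_i$, so that $\mld_{\hat x}(\hat X,\hat\Delta)\le\mld_x(X,\Delta_i)$. The statement then reduces to proving, for each $i$, that $\mld_x(X,\Delta_i)$ equals one of the two ends of the chain $\mld_{\hat x}(\hat X,\hat\Delta)\le\mld_x(X,\Delta_i)\le\mld_{y_i}(Y_i,\Gamma_i)$; passing afterwards to the subfamily on which one alternative holds for all $i$ yields the asserted form.

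The mechanism is a splitting according to the distinguished point. Write $Q_i$ for the infimum of $a_E(Y_i,\Gamma_i)$ over divisors with $c_{Y_i}(E)\subset W_i$ and $c_{Y_i}(E)\neq\{y_i\}$, and $\hat Q$ for the analogous infimum on $\hat Y$. Then $\mld_x(X,\Delta_i)=\min(\mld_{y_i}(Y_i,\Gamma_i),Q_i)$ and $\mld_{\hat x}(\hat X,\hat\Delta)=\min(\mld_{\hat y}(\hat Y,\hat\Gamma),\hat Q)$. The key input is that, after replacing $\cF$ by a subfamily, the exceptional geometry of $\pi_K$ away from the section $y_l$ is rigid, so a divisor whose centre is not the distinguished point lifts and descends between $\hat Y$ and $Y_i$ with the same log discrepancy; this identifies the two ``away-from-distinguished'' contributions, $Q_i=\hat Q$, the lifting direction requiring a uniform spreading-out of the computing divisors over $Z_l$.

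With this in hand I would split on whether $\mld_{\hat y}(\hat Y,\hat\Gamma)\ge\hat Q$ or $<\hat Q$. If $\mld_{\hat y}(\hat Y,\hat\Gamma)\ge\hat Q$, then $\mld_{\hat x}(\hat X,\hat\Delta)=\hat Q$ is computed by a divisor centred away from $\hat y$; descending it to $Y_i$ produces a divisor over $X$ with $c_X=\{x\}$ and log discrepancy $\hat Q$, whence $\mld_x(X,\Delta_i)\le\hat Q=\mld_{\hat x}(\hat X,\hat\Delta)$, and combined with the reverse inequality this forces $\mld_x(X,\Delta_i)=\mld_{\hat x}(\hat X,\hat\Delta)$. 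If instead $\mld_{\hat y}(\hat Y,\hat\Gamma)<\hat Q$, then $\mld_{\hat x}(\hat X,\hat\Delta)=\mld_{\hat y}(\hat Y,\hat\Gamma)$ and $\mld_x(X,\Delta_i)=\min(\mld_{y_i}(Y_i,\Gamma_i),Q_i)$, so the desired conclusion $\mld_x(X,\Delta_i)=\mld_{y_i}(Y_i,\Gamma_i)$ is exactly the inequality $\mld_{y_i}(Y_i,\Gamma_i)\le Q_i$, that is, the distinguished point is the most singular point of the fibre.

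The hard part will be this last inequality $\mld_{y_i}(Y_i,\Gamma_i)\le Q_i$. It is not formal: the naive generic-limit comparison only yields $\mld_{\hat y}(\hat Y,\hat\Gamma)\le\mld_{y_i}(Y_i,\Gamma_i)$, which points the wrong way, and the case hypothesis $\mld_{\hat y}(\hat Y,\hat\Gamma)<\hat Q=Q_i$ does not by itself bound $\mld_{y_i}(Y_i,\Gamma_i)$ by $Q_i$. Here I expect to have to use the standing hypotheses essentially — the effectivity of $\hat\Gamma$ (so that log discrepancies along the exceptional divisors and over the rest of the fibre stay near or above one), the fact that $\hat y$ lies on the strict transform of the one-dimensional smallest lc centre $\hat C$ (so that the pair is most degenerate precisely at the distinguished point), and the terminality of $\hat Y$ — to show that no divisor centred elsewhere in the fibre can drop below $\mld_{y_i}(Y_i,\Gamma_i)$ without, by rigidity, already lowering $\mld_{\hat x}(\hat X,\hat\Delta)$ away from $\hat y$ and contradicting the case. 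Pinning down this localised control at the distinguished point is the crux on which the whole argument turns.
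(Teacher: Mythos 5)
Your skeleton --- writing $\mld_x(X,\Delta_i)=\min\bigl(\mld_{y_i}(Y_i,\Gamma_i),Q_i\bigr)$ and splitting according to whether the minimum is attained at $y_i$ or away from it --- is indeed the shape of the paper's argument, but the direction of the ``automatic'' generic-limit inequality is backwards throughout your proposal, and the one claim that carries all the content is asserted rather than proved. The construction of the generic limit gives $\mld_x(X,\Delta_i)\le\mld_{\hat x}(\hat X,\hat\Delta)$ (specialisation can only drop the minimal log discrepancy; this is \cite[remark~4.3]{K21}, which is exactly what the paper invokes), not your inequality $\mld_{\hat x}(\hat X,\hat\Delta)\le\mld_x(X,\Delta_i)$. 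Your justification fails concretely: $\hat\Delta$ is not the pullback of $\Delta_i$, it only agrees with an approximation of $\Delta_i$ modulo $\fm^l$, and the matchings $a_{\hat E}(\hat X)=a_{E_i}(X)$ and $\ord_{\hat E}\hat\Delta=\ord_{E_i}\Delta_i$ were arranged only for the finitely many exceptional divisors of $\pi_K$, which descend through the family; an arbitrary divisor over $X$ with centre $x$ has no such matching. The same reversal appears in your remark that the naive comparison yields $\mld_{\hat y}(\hat Y,\hat\Gamma)\le\mld_{y_i}(Y_i,\Gamma_i)$; it yields the opposite.

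Second, the identity $Q_i=\hat Q$ that you call ``rigidity'' is not a spreading-out statement: divisors over the special fibre $Y_i$ do not lift to the family (only divisors over $Y_K$ spread out and then specialise), so the direction you need, $\hat Q\le Q_i$, is precisely the hard content of the theorem and cannot be obtained by the mechanism you describe. What the paper proves is the slightly weaker (and sufficient) inequality $\mld_{\hat x}(\hat X,\hat\Delta)\le a_E(X,\Delta_i)$ for every divisor $E$ over $X$ with $c_X(E)=x$ and $c_{Y_i}(E)\neq y_i$, and this occupies the entire proof: one descends a log resolution $\hat V\to\hat Y$ of $(\hat Y,\hat F)$ with flat simple-normal-crossing strata, introduces the general hyperplane section $\hat H$ and the $\hat\fm$-primary ideal $\sfd$, and establishes log canonicity of $(V_i,G_i+\Delta_{Vi})$ outside $M_i\cup N_i$ and of the triple $(Y_i,\Gamma_i,(\fd_i\sO_{Y_i})^t)$ outside $y_i$, against which any such $E$ is compared. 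Finally, these two corrections show that your diagnosis of the crux is misplaced: once $\mld_{\hat x}(\hat X,\hat\Delta)\le Q_i$ is known, the dichotomy is formal --- either $\mld_x(X,\Delta_i)=\mld_{y_i}(Y_i,\Gamma_i)$, or $\mld_x(X,\Delta_i)=Q_i$ and then $\mld_{\hat x}(\hat X,\hat\Delta)\le Q_i=\mld_x(X,\Delta_i)\le\mld_{\hat x}(\hat X,\hat\Delta)$ forces the first alternative --- so the inequality $\mld_{y_i}(Y_i,\Gamma_i)\le Q_i$ (``the distinguished point is the most singular''), which you identify as the hard step, is never needed and is not what the paper proves.
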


\begin{proof}
The proof is the same as that of \cite[theorem~4.10]{Kaxv} except that $\hat X$ may not be $\bQ$-factorial. We shall explain the way of reformulation, along which one can write down the complete proof following the proof in \cite{Kaxv} verbatim.

We shall use the notion of a triple where an $\bR$-ideal is added to a pair. Recall the notation $\hat D=n\hat\Delta$ and $D_i=n\Delta_i$. We also write $D(l)$ for the Cartier divisor on $X\times Z_l$ defined by the function $f(l)$. Since $\hat C$ is the smallest lc centre of $(\hat X,\hat\Delta)$, $\mld_{\hat x}(\hat X,\hat\Delta)$ is positive. Since $Y_K$ is $\bQ$-factorial, the exceptional locus $\hat F$ of $\hat\pi$ consists of $\bQ$-Cartier prime divisors. By Theorem~\ref{thm:dFEM}, $(X,\Delta_i)$ is log canonical for all $i$ after replacement of $\cF$. Replacing $\cF$, it suffices to prove the inequality $\mld_{\hat x}(\hat X,\Delta)\le a_E(X,\Delta_i)$ for all $i\in N_{l_0}$ and all divisors $E$ over $X$ such that $c_X(E)=x$ and $c_{Y_i}(E)\neq y_i$.

The reduced part $\hat S=\rd{\hat\Delta}$ of $\hat\Delta$ (which may be zero) is not necessarily $\bQ$-Cartier. For this reason, we take the general hyperplane section $\hat H$ in $\hat X$ containing the support of $\hat\Delta-\hat S$ and fix a positive rational number $t$ such that $(\hat Y,\hat\Gamma+t\hat\pi^*\hat H)$ is log canonical outside the strict transform $\hat C_Y$ of $\hat C$. Since $(\hat Y,\hat\Gamma)$ is plt outside $\hat C_Y$, the strict transform $\hat S_Y$ of $\hat S$ is normal outside $\hat C_Y$. There exists a log resolution $\hat V\to\hat Y$ of $(\hat Y,\hat F)$ isomorphic outside $\hat F$ such that the sum $\hat G+\hat S_V$ of the exceptional locus $\hat G=\sum_j\hat G_j$ of $\hat V\to\hat X$ and the strict transform $\hat S_V$ of $\hat S$ is simple normal crossing outside the inverse image $\hat N$ of $\hat C_Y$. Let $\hat L$ denote the restriction to $\hat G$ of $\hat S_V$ and let $\hat M$ denote the restriction to $\hat G$ of the strict transform of the support of $\hat\Delta-\hat S$. We may assume that $\hat M\setminus\hat N$ is contained in the union of the divisors $\hat G_j$ such that $t\ord_{\hat G_j}\hat H\ge\mld_{\hat x}(\hat X,\hat\Delta)$. We take an $\hat\fm$-primary ideal $\sfd=\sO_{\hat X}(-\hat H)+\hat\fm^h$ in $\sO_{\hat X}$ by the maximum $h$ of $\ord_{\hat G_j}\hat H$, where $\hat\fm$ denotes the maximal ideal in $\sO_{\hat X}$. Then $\ord_{\hat G_j}\sfd=\ord_{\hat G_j}\hat H$ and the triple $(\hat Y,\hat\Gamma,(\sfd\sO_{\hat Y})^t)$ is log canonical outside $\hat C_Y$.

We can assume that $\hat V\to\hat Y$ descends to a resolution $V_l\to Y_l$ for all $l\ge l_0$ with the following good properties. The loci $\hat G_j$, $\hat L$ and $\hat M$ descend to flat families $G_{jl}$, $L_l$ and $M_l$ in $V_l$, whilst $\hat N$ descends to the inverse image $N_l$ in $V_l$ of $y_l$. The ideal $\sfd$ descends to an ideal sheaf $\fd_l$ in $\sO_{X\times Z_l}$. The divisor $G_l=\sum_jG_{jl}$ is simple normal crossing and all strata as well as the restrictions to $L_l\setminus N_l$ are smooth over $Z_l$. We express the fibre at $z\in Z_l$ by adding the subscript $z$ such as $G_{jl,z}=G_{jl}\times_{Z_l}z$ and $\fd_{lz}=\fd_l\sO_{X\times z}$. If the fibre at $z=s_l(i)$ is independent of $l$ as far as $i\in N_l$, then we express it by using the subscript $i$ instead of $lz$ such as $V_i=V_{lz}$, $G_{ji}=G_{jl,z}$ and $\fd_i=\fd_{lz}$ With this notation, $\ord_{G_{jl,z}}D(l)_z$ and $\ord_{G_{jl,z}}\fd_{lz}$ are constant on $Z_l$, with $\ord_{G_{jl,z}}D(l)_z<l$, and the equalities
\begin{itemize}
\item
$\sO_{V_l}(-D_V(l))\sO_{\hat V}+\sfI=\sO_{\hat V}(-\hat D_V)+\sfI$ with $\sfI=\sO_{\hat V}(-(n+1)\hat G)$,
\item
$\sO_{V_l}(-D_V(l))\sO_{V_i}+\sI_i=\sO_{V_i}(-D_{Vi})+\sI_i$ with $\sI_i=\sO_{V_i}(-(n+1)G_i)$
\end{itemize}
hold for the strict transforms $D_V(l)$, $\hat D_V$, $D_{Vi}$ of $D(l)$, $\hat D$, $D_i$. The restriction to $G_i$ of $D_{Vi}$ is supported in $L_i\cup M_i$, and the pair $(V_i,G_i+\Delta_{Vi})$ for $\Delta_{Vi}=n^{-1}D_{Vi}$ is log canonical outside $M_i\cup N_i$ by the same argument as for \cite[lemma~5.5]{K15}.

We shall verify the inequality $\mld_{\hat x}(\hat X,\Delta)\le a_E(X,\Delta_i)$. If $c_{V_i}(E)\not\subset M_i\cup N_i$, then we choose any divisor $G_{ji}$ that contains $c_{V_i}(E)$. It follows that
\[
\mld_{\hat x}(\hat X,\hat\Delta)\le a_{\hat G_j}(\hat X,\hat\Delta)=a_{G_{ji}}(X,\Delta_i)\le a_{G_{ji}}(X,\Delta_i)\ord_EG_{ji}\le a_E(X,\Delta_i),
\]
where the last inequality is derived from the above log canonicity of $(V_i,G_i+\Delta_{Vi})$. If $c_{V_i}(E)\subset M_i$ but $c_{V_i}(E)\not\subset N_i$, then $c_{V_i}(E)$ lies on some divisor $G_{ji}$ that has $\ord_{G_{ji}}\fd_i=\ord_{\hat G_j}\sfd=\ord_{\hat G_j}\hat H\ge t^{-1}\mld_{\hat x}(\hat X,\Delta)$. It follows that
\[
\mld_{\hat x}(\hat X,\hat\Delta)\le t\ord_{G_{ji}}\fd_i\le t\ord_E\fd_i\le a_E(Y_i,\Gamma_i)=a_E(X,\Delta_i),
\]
where the last inequality is derived from the log canonicity of $(Y_i,\Gamma_i,(\fd_i\sO_{Y_i})^t)$ outside $y_i$, which is the replacement of \cite[lemma~4.11]{Kaxv}. For the counterpart of the divisor $\hat Q$, we reformulate the proof by fixing an integer $n_Q>r^2\ord_{\hat Q}\hat D_Y$ for a positive integer $r$ such that the multiple by $r$ of every exceptional prime divisor on $Y_K$ is Cartier. Then we attain the equalities
\begin{itemize}
\item
$\sO_{Y_l}(-rD_Y(l))\sO_{\hat Y}+\sfJ=\sO_{\hat Y}(-r\hat D_Y)+\sfJ$ with $\sfJ=\sO_{\hat Y}(-n_Q\hat F)$,
\item
$\sO_{Y_l}(-rD_Y(l))\sO_{Y_i}+\sJ_i=\sO_{Y_i}(-rD_{Yi})+\sJ_i$ with $\sJ_i=\sO_{Y_i}(-n_QF_i)$
\end{itemize}
for the strict transforms $D_Y(l)$, $\hat D_Y$, $D_{Yi}$ of $D(l)$, $\hat D$, $D_i$ and the exceptional loci $\hat F$, $F_i$ of $\hat\pi$, $\pi_i$.
\end{proof}

\section{Construction of a birational model}
We shall explain how to construct a model $Y_K\to X_K$ to which Theorem~\ref{thm:success} will be applied. The first step amounts to the construction in \cite[section~5]{K21} and uses the ACC for canonical thresholds on threefolds.

\begin{proposition}\label{prp:ct}
Let $x\in X$ be the germ of a $\bQ$-factorial terminal threefold and let $\hat x\in\hat X$ denote the spectrum of the completion of the local ring $\sO_{X,x}$. Consider a log canonical pair $(\hat X,\hat\Delta)$ with a $\bQ$-divisor $\hat\Delta$. Then there exists a contraction $\mu\colon X'\to X$ from a $\bQ$-factorial terminal threefold which is isomorphic outside $x$ such that for the base change $\hat\mu\colon\hat X'\to\hat X$ of $\mu$ and for the exceptional $\bQ$-divisor $\hat B$ on $\hat X'$ defined by the equality $K_{\hat X'}+\hat\Delta'+\hat B=\hat\mu^*(K_{\hat X}+\hat\Delta)$ where $\hat\Delta'$ is the strict transform of $\hat\Delta$,
\begin{itemize}
\item
$\hat B$ is effective and
\item
$\hat\Delta'$ is $\bQ$-Cartier and $\mld_{\hat\mu^{-1}(\hat x)}(\hat X',\hat\Delta')\ge1$.
\end{itemize}
\end{proposition}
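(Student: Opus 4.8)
The plan is to obtain $X'$ by extracting over $x$ the divisorial valuations that obstruct the canonicity of $(\hat X,\hat\Delta)$ along the fibre and contracting all others, so that the strict transform pair becomes canonical while the discrepancy correction stays effective. Because $\hat X$ is the completion of $\sO_{X,x}$, the divisorial valuations over $\hat X$ centred at $\hat x$ are precisely those over $X$ centred at $x$; hence the valuations to be extracted are dictated by $\hat\Delta$, yet the extraction can be realised by a projective birational $\mu\colon X'\to X$ that is an isomorphism away from $x$, and its base change carries out the extraction on $\hat X$. Following \cite[section~5]{K21}, I would take a log resolution of $(\hat X,\hat\Delta)$ and run a relative minimal model program whose output is a $\bQ$-factorial model on which the strict transform of $\hat\Delta$ is canonical along the fibre and the exceptional part $\hat B$ of the crepant pullback is effective, and then pass to a terminalisation so that $X'$ is $\bQ$-factorial and terminal.

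Granting such a $\mu$, the three assertions follow formally. The effectivity of $\hat B$ is built into the construction, which extracts no valuation with $a_E(\hat X,\hat\Delta)>1$, so each exceptional $\hat E_j$ of $\hat\mu$ enters $\hat B$ with coefficient $1-a_{\hat E_j}(\hat X,\hat\Delta)\ge0$. The $\bQ$-Cartierness of $\hat\Delta'$ is then automatic: $\hat\mu^*(K_{\hat X}+\hat\Delta)$ is $\bQ$-Cartier as a pullback, $K_{\hat X'}$ is $\bQ$-Cartier because the terminal $X'$ is $\bQ$-Gorenstein, and each component of $\hat B$ is the base change of a $\bQ$-Cartier divisor on the $\bQ$-factorial $X'$; hence $\hat\Delta'=\hat\mu^*(K_{\hat X}+\hat\Delta)-K_{\hat X'}-\hat B$ is $\bQ$-Cartier. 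The inequality $\mld_{\hat\mu^{-1}(\hat x)}(\hat X',\hat\Delta')\ge1$ is exactly the canonicity of the strict transform along the fibre that the construction delivers.

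The heart of the matter, and the main obstacle, is the existence of $\mu$ with these properties within the category of $\bQ$-factorial terminal threefolds. A key point is that one need not extract every valuation with $a_E(\hat X,\hat\Delta)<1$, of which there may be infinitely many; for any divisor $F$ over $\hat X'$ one has
\[
a_F(\hat X',\hat\Delta')=a_F(\hat X',\hat\Delta'+\hat B)+\ord_F\hat B=a_F(\hat X,\hat\Delta)+\ord_F\hat B,
\]
so it suffices to extract a finite, well-chosen set of valuations for which the resulting effective $\hat B$ lifts the log discrepancies of all remaining valuations centred in the fibre to at least $1$. Guaranteeing that such a finite model exists and that the inductive sequence of extractions closes up is where the ACC for canonical thresholds on threefolds \cite{HLLaxv} enters, forcing the procedure to terminate. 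The genuinely new point compared with the smooth case of \cite{K21} is that $X$ is only terminal: one must keep every intermediate model terminal rather than smooth, arranged by a terminalisation contracting only divisors of discrepancy already at least $1$ and hence preserving all three conclusions, and one must check that the model descends from $\hat X$ to $X$, which follows from the valuation correspondence above together with the projectivity of each extraction.
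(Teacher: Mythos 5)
Your formal deductions are fine: effectivity of $\hat B$ amounts to extracting only valuations with $a_{\hat E}(\hat X,\hat\Delta)\le1$, and $\bQ$-Cartierness of $\hat\Delta'$ follows once $X'$ is $\bQ$-factorial, exactly as in the paper. Your skeleton --- iterated extraction of finite sets of valuations, with termination forced by the ACC for canonical thresholds of \cite{HLLaxv} --- is also the paper's. The gap sits in the one step you yourself call the heart of the matter: how the extraction is realised over $X$. You propose to take a log resolution of the formal pair $(\hat X,\hat\Delta)$, run a relative MMP over $\hat X$ (legitimate in principle by \cite{LMaxv}), and then descend the output to $X$ using the correspondence between divisorial valuations over $\hat X$ centred at $\hat x$ and those over $X$ centred at $x$. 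That correspondence (which itself needs proof) identifies individual valuations, but it does not descend models: a contraction over $\hat X$ need not be the base change of any contraction over $X$, because $\hat X$ carries formal divisors and centres that are not algebraic; the paper itself warns that $\hat X$ may fail to be $\bQ$-factorial even though $X$ is \cite[example~1.2.3]{K24}, and that contractions over a completion need not descend. If you instead try to build $X'\to X$ directly on the variety, you need the chosen valuations to be extractable there, i.e.\ to have log discrepancy at most one with respect to some klt pair \emph{on} $X$; but the only pair available, $(\hat X,\hat\Delta)$, lives on the completion, so there is nothing algebraic to feed into the extraction results of the MMP. The same defect undermines your appeal to \cite{HLLaxv}: in a purely formal procedure the scaling factors are thresholds of formal pairs, and the ACC there is a statement about varieties.

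What is missing is precisely the approximation device on which the paper's proof is built, and which resolves both difficulties at once. Writing $\hat\Delta=q\hat D$ with $\hat D$ Cartier, one takes $\hat\fa=\sO_{\hat X}(-\hat D)+\hat\fm^l$ with $l$ so large that $\ord_{\hat E}\hat\fa=\ord_{\hat E}\hat D$ for the finitely many relevant valuations, namely those computing $\mld_{\hat Z_i}(\hat X_i,c_i\hat\Delta_i)=1$ along the inverse image $\hat Z_i$ of $\hat x$, a finite set because $(\hat X_i,c_i\hat\Delta_i)$ is klt. Being $\hat\fm$-primary, $\hat\fa$ descends to an $\fm$-primary ideal $\fa$ on $X$, and the strict transform $\Gamma_i=qG_i$ of the divisor of a general member of $\fa$ is an algebraic $\bQ$-divisor with the same log discrepancies on those valuations. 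The extraction is then the $\bQ$-factorial terminalisation of the canonical pair $(X_i,c_i\Gamma_i)$ --- an MMP on a variety, whose base change performs the desired operation on $\hat X_i$ --- and the $c_i$, being canonical thresholds of the algebraic pairs $(X_i,\Gamma_i)$ on terminal threefolds, form a strictly increasing sequence to which \cite[theorem~1.7]{HLLaxv} genuinely applies. So the direction of travel must be reversed relative to your plan: one approximates algebraically and base changes to $\hat X$, rather than constructing formally and descending.
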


\begin{proof}
Since $\hat X$ is $\bQ$-Gorenstein, $\hat\Delta$ is $\bQ$-Cartier. Since $X'$ is $\bQ$-factorial, the exceptional locus of $\hat\mu$ consists of $\bQ$-Cartier prime divisors. In particular, $\hat\Delta'$ is $\bQ$-Cartier.

Write $\hat\Delta=q\hat D$ with a Cartier divisor $\hat D$ and a rational number $0<q<1$. We shall inductively build a sequence $\cdots X_i\to\cdots\to X_0=X$ of contractions between $\bQ$-factorial terminal threefolds isomorphic outside $x$. Suppose that $X_i$ has been constructed. Write the base change $\hat X_i=X_i\times_X\hat X$ and let $\hat\Delta_i$ denote the strict transform in $\hat X_i$ of $\hat\Delta$, which is $\bQ$-Cartier for the same reason as for $\hat\Delta'$. For $j<i$, let $\hat S_j$ denote the set of the exceptional prime divisors of $\hat X_{j+1}\to\hat X_j$.

As far as $\mld_{\hat Z_i}(\hat X_i,\hat\Delta_i)$ is less than one for the inverse image $\hat Z_i$ of $\hat x$, we take a rational number $0<c_i<1$ such that $\mld_{\hat Z_i}(\hat X_i,c_i\hat\Delta_i)=1$. The pair $(\hat X_i,c_i\hat\Delta_i)$ is klt and thus the set $\hat S_i$ of the divisors $\hat E$ over $\hat X_i$ with $c_{\hat X}(\hat E)=\hat x$ and $a_{\hat E}(\hat X_i,c_i\hat\Delta_i)=1$ is finite. It descends to a set $S_i$ of divisors $E$ over $X_i$ by the relation $\hat E=E\times_X\hat X$. We take an $\hat\fm$-primary ideal $\hat\fa=\sO_{\hat X}(-\hat D)+\hat\fm^l$ in $\sO_{\hat X}$ for a large integer $l$ such that $\ord_{\hat E}\hat D=\ord_{\hat E}\hat\fa$ for all $\hat E\in\bigcup_{k\le i}\hat S_k$, where $\hat\fm=\fm\sO_{\hat X}$ with the maximal ideal $\fm$ in $\sO_X$ defining $x$. It descends to an $\fm$-primary ideal $\fa$ in $\sO_X$. Set $\Gamma_i=qG_i$ with the strict transform $G_i$ in $X_i$ of the divisor on $X$ defined by the general member of $\fa$. Then $(X_i,c_i\Gamma_i)$ is canonical and $S_i$ coincides with the set of the divisors $E$ over $X_i$ such that $a_E(X_i,c_i\Gamma_i)=1$. We take a $\bQ$-factorial terminalisation $X_{i+1}\to X_i$ of $(X_i,c_i\Gamma_i)$. The set of the exceptional divisors equals $S_i$. It follows that $(\hat X_{i+1},c_i\hat\Delta_{i+1})$ is crepant to $(\hat X_i,c_i\hat\Delta_i)$ and $\mld_{\hat Z_{i+1}}(\hat X_{i+1},c_i\hat\Delta_{i+1})$ is greater than one.

By construction, the sequence of $c_i$ is strictly increasing. Since $c_i$ is the canonical threshold of $\Gamma_i$ on $X_i$, that is, $\mld_{\eta_i}(X_i,c_i\Gamma_i)=1$ at some scheme-theoretic point $\eta_i$, they form a set satisfying the ACC \cite[theorem~1.7]{HLLaxv}. Hence our construction terminates at some $X_i$ and we may take $X_i$ as $X'$.
\end{proof}

The most substantial part of the proof of Theorem~\ref{thm:reduction} is the existence of a smooth point $x_i\in X_i$ in the following construction. Recall that $d_E(X)$ denotes the discrepancy of $E$ with respect to $X$. The precise definition of a \textit{divisorial contraction} will be provided in Definition~\ref{dfn:divcont}.

\begin{proposition}\label{prp:sequence}
Let $x\in X$ be the germ of a $\bQ$-factorial terminal threefold and let $\hat x\in\hat X$ denote the spectrum of the completion of the local ring $\sO_{X,x}$. Consider an lc but not klt pair $(\hat X,\hat\Delta)$ with a $\bQ$-divisor $\hat\Delta$ such that $\mld_{\hat x}(\hat X,\hat\Delta)=1$ and such that the smallest lc centre of $(\hat X,\hat\Delta)$ is a regular curve $\hat C$. Set $x_0\in X_0$ to be $x\in X$. Then there exists an infinite sequence of divisorial contractions $E_{i+1}\subset X_{i+1}\to x_i\in X_i$ contracting the divisor $E_{i+1}$ to a closed point $x_i$ such that for the base change $\hat E_{i+1}\subset\hat X_{i+1}\to\hat x_i\in\hat X_i$ by $\hat X\to X$ and for the strict transforms $\hat\Delta_i$ and $\hat C_i$ in $\hat X_i$ of $\hat\Delta$ and $\hat C$ respectively,
\begin{itemize}
\item
$\hat x_i=\hat C_i\cap\hat E_i$ if $i\ge1$,
\item
$(\hat X_{i+1},\hat\Delta_{i+1})$ is crepant to $(\hat X_i,\hat\Delta_i)$ and
\item
$d_{E_{i+1}}(X_i)/\ord_{E_{i+1}}\fm_i\le d_{E_j}(X_i)/\ord_{E_j}\fm_i$ for all $j>i$, where $\fm_i$ denotes the maximal ideal in $\sO_{X_i}$ defining $x_i$.
\end{itemize}
\end{proposition}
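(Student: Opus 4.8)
The plan is to build the sequence by induction on $i$, carrying along the assertion that $\hat x_i$ is a closed point of $\hat X_i$ lying on the regular curve $\hat C_i$, that $\hat C_i$ is the smallest lc centre of the crepant pair $(\hat X_i,\hat\Delta_i)$, and that $\mld_{\hat x_i}(\hat X_i,\hat\Delta_i)=1$. The case $i=0$ is the hypothesis. Assuming this at stage $i$, I would construct the contraction $\phi\colon X_{i+1}\to X_i$, verify the three bulleted properties, and show that the assertion is restored at stage $i+1$, so that the construction never stops.

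For the inductive step, since $\mld_{\hat x_i}(\hat X_i,\hat\Delta_i)=1>0$ this minimal log discrepancy is computed by a divisor, and I would consider the set $\mathcal E_i$ of all divisors $E$ over $X_i$ with $c_{X_i}(E)=x_i$ and $a_{\hat E}(\hat X_i,\hat\Delta_i)=1$, where $\hat E=E\times_{X_i}\hat X_i$. Because $\hat X_i$ is terminal, passing to the completion preserves discrepancies, so $d_E(X_i)=\ord_E\hat\Delta_i$ for every $E\in\mathcal E_i$ and the quantity to be controlled is the ratio $d_E(X_i)/\ord_E\fm_i=\ord_E\hat\Delta_i/\ord_E\fm_i$. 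I would take $E_{i+1}\in\mathcal E_i$ minimising this ratio and obtain $\phi$ by extracting $E_{i+1}$: pass to a model of $X_i$ on which $E_{i+1}$ appears and run a relative minimal model program over $X_i$ to contract every other exceptional divisor, much as in the terminalisation of Proposition~\ref{prp:ct}. As $E_{i+1}$ has positive discrepancy over the terminal threefold $X_i$ and its centre is the point $x_i$, the output $\phi\colon X_{i+1}\to X_i$ is a divisorial contraction of $E_{i+1}$ to $x_i$ between $\bQ$-factorial terminal threefolds in the sense of Definition~\ref{dfn:divcont}.

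The three properties then follow. Crepancy is immediate: since $a_{\hat E_{i+1}}(\hat X_i,\hat\Delta_i)=1$ and $\hat\Delta_{i+1}$ is the strict transform, one gets $K_{\hat X_{i+1}}+\hat\Delta_{i+1}=\hat\phi^*(K_{\hat X_i}+\hat\Delta_i)$. For the ratio inequality I would note that each $E_j$ with $j>i$ is, by the crepancy of the composite $(\hat X_{j-1},\hat\Delta_{j-1})$ over $(\hat X_i,\hat\Delta_i)$, again a divisor belonging to $\mathcal E_i$: its centre in $X_i$ is the point $x_i$ and $a_{\hat E_j}(\hat X_i,\hat\Delta_i)=1$. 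Hence $E_j$ lies in the set over which $E_{i+1}$ was chosen to be minimal, and $d_{E_{i+1}}(X_i)/\ord_{E_{i+1}}\fm_i\le d_{E_j}(X_i)/\ord_{E_j}\fm_i$ holds automatically; this is exactly why minimising over all of $\mathcal E_i$, rather than coupling the choice to the unknown future, suffices. Finally I would set $\hat x_{i+1}=\hat C_{i+1}\cap\hat E_{i+1}$ and check that the strict transform $\hat C_{i+1}$ of the regular curve, which meets $\hat E_{i+1}$ because $\hat C_i$ passes through the contracted point $\hat x_i$, does so in the single point descending to the required $x_{i+1}$.

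To restore the inductive hypothesis I would use that $\hat E_{i+1}$, having log discrepancy one, is not an lc centre, so the lc centres of $(\hat X_{i+1},\hat\Delta_{i+1})$ are the birational transforms of those of $(\hat X_i,\hat\Delta_i)$ and $\hat C_{i+1}$ remains the smallest one; and that, by precise adjunction of $(\hat X_i,\hat\Delta_i)$ to the regular curve $\hat C_i$, the equality $\mld_{\hat x_i}=1$ records the vanishing of the different at $\hat x_i$, a condition transported along the isomorphism $\hat C_{i+1}\to\hat C_i$ of regular curves, whence $\mld_{\hat x_{i+1}}(\hat X_{i+1},\hat\Delta_{i+1})=1$, the bound $\le1$ being Lemma~\ref{lem:LM}. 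The main obstacle I anticipate lies at the centre of the step: showing that the infimum of the ratio over $\mathcal E_i$ is actually attained by a divisor whose centre is $x_i$ and which can be extracted to a terminal $\bQ$-factorial $X_{i+1}$, together with the geometric continuation that the regular curve still passes through the newly contracted point. Establishing that the crepant computing divisors form a family controlled enough for the minimum to exist, and that the minimiser is realised by a genuine divisorial contraction preserving the curve, is where I expect the real work to be, with the adjunction along $\hat C_i$ doing the essential bookkeeping.
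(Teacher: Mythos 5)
Your skeleton does match the paper's: induct on $i$, take the minimum $c$ of $d_E(X_i)/\ord_E\fm_i$ over the set $\mathcal{E}_i$ of divisors $E$ with $c_{X_i}(E)=x_i$ and $a_{\hat E}(\hat X_i,\hat\Delta_i)=1$, deduce crepancy from $a_{\hat E_{i+1}}(\hat X_i,\hat\Delta_i)=1$, and get the third bullet from minimality of $c$ together with crepancy, exactly as the paper does. But the step you yourself flag as ``the real work'' is genuinely missing, and the route you sketch for it would fail. You first choose a minimiser $E_{i+1}\in\mathcal{E}_i$ and then assert that extracting it gives a divisorial contraction in the sense of Definition~\ref{dfn:divcont}, ``as $E_{i+1}$ has positive discrepancy over the terminal threefold $X_i$.'' That implication is false. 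A plain relative $K$-MMP cannot be told which divisor to keep: run over the terminal $\bQ$-factorial $X_i$ it contracts \emph{every} exceptional divisor and returns $X_i$. One can extract exactly $E_{i+1}$ by a BCHM-type lemma using a pair in which $E_{i+1}$ has log discrepancy at most one, but the source of that extraction need not be terminal. Concretely, over a smooth point the weighted blow-up with $\wt(x_1,x_2,x_3)=(2,3,5)$ is the unique $\bQ$-factorial extraction of a valuation with centre the point and positive discrepancy, yet it has a quotient singularity of type $\frac{1}{3}(1,1,1)$, which is canonical and not terminal; consistently, the classification \cite{K01} (cf.\ Theorem~\ref{thm:clsA}) allows only weights $(1,a,b)$ over smooth points. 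Since every subsequent argument of the paper applies the classification of \emph{terminal} divisorial contractions to each $\pi_i$, terminality of $X_{i+1}$ is not negotiable, and nothing in your argument excludes that every minimiser in $\mathcal{E}_i$ is a non-extractable valuation of this kind. A second, independent gap: $\hat\Delta_i$ is a divisor on the completion $\hat X_i$ only, so no extraction or MMP guided by it produces a morphism of \emph{varieties} $X_{i+1}\to X_i$; at best you would get a formal model, and the induction (and the statement itself) requires an honest germ $x_{i+1}\in X_{i+1}$ whose base change recovers $\hat X_{i+1}$.

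The paper's proof is built precisely around these two obstructions, and its key point is that it never pre-chooses $E_{i+1}$. It perturbs the pair into the triple $(\hat X_i,(1-\varepsilon)\hat\Delta_i,\hat\fm_i^{\varepsilon c})$, which still has minimal log discrepancy one at $\hat x_i$ and whose computing divisors are exactly the elements of $\mathcal{E}_i$ attaining the minimal ratio $c$; it then replaces $\hat\Delta_i$ by an algebraic approximation $\Gamma_i=qG_i$ on $X_i$, built from the general member of the $\fm$-primary ideal descended from $\sO_{\hat X}(-\hat D)+\hat\fm^l$ as in Proposition~\ref{prp:ct}, so that $(X_i,(1-\varepsilon)\Gamma_i,\fm_i^{\varepsilon c})$ is a klt canonical triple on the actual germ with the same computing divisors; finally it takes a $\bQ$-factorial terminalisation $W$ of this triple and runs the $K_W$-MMP over $X_i$. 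Every step of that MMP preserves terminality, the program ends at $X_i$, and its last step is therefore automatically a divisorial contraction $E_{i+1}\subset X_{i+1}\to x_i\in X_i$ between terminal $\bQ$-factorial threefolds whose exceptional divisor computes the triple's minimal log discrepancy, hence lies in $\mathcal{E}_i$ with ratio exactly $c$ --- which minimiser it is, one does not and need not control. Two smaller points: the existence of the minimum $c$, which you leave open, follows from the bound $\ord_E\fm_i\le t_i^{-1}$ given by the log canonical threshold $t_i$ of $\hat\fm_i$ on $(\hat X_i,\hat\Delta_i)$, since $d_E(X_i)=\ord_{\hat E}\hat\Delta_i$ ranges in $\tfrac{1}{n_i}\bZ_{>0}$; and your adjunction-via-the-different argument for $\mld_{\hat x_{i+1}}(\hat X_{i+1},\hat\Delta_{i+1})=1$ is an unnecessary detour --- crepancy gives $\mld_{\hat x_{i+1}}(\hat X_{i+1},\hat\Delta_{i+1})\ge\mld_{\hat x}(\hat X,\hat\Delta)=1$ directly, and Lemma~\ref{lem:LM} gives the reverse inequality.
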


\begin{proof}
We shall use the notion of a triple. We shall build the sequence inductively. Suppose that $X_i$ has been constructed, for which $X_i$ is $\bQ$-factorial and $\hat\Delta_i$ is $\bQ$-Cartier. On the germ $\hat x_i\in\hat X_i$, the strict transform $\hat C_i$ is the smallest lc centre of $(\hat X_i,\hat\Delta_i)$ and it follows from Lemma~\ref{lem:LM} that $\mld_{\hat x_i}(\hat X_i,\hat\Delta_i)\le1$. Hence $\mld_{\hat x_i}(\hat X_i,\hat\Delta_i)=1$ since $\mld_{\hat x_i}(\hat X_i,\hat\Delta_i)\ge\mld_{\hat x}(\hat X,\hat\Delta)=1$.

Consider the minimum $c$ of $d_E(X_i)/\ord_E\fm_i$ for the divisors $E$ over $X_i$ such that $c_{X_i}(E)=x_i$ and $a_{\hat E}(\hat X_i,\hat\Delta_i)=1$ for $\hat E=E\times_X\hat X$. The existence of $c$ follows from the boundedness $\ord_E\fm_i\le t_i^{-1}$ in terms of the log canonical threshold $t_i$ of $\hat\fm_i=\fm_i\sO_{\hat X_i}$ on $(\hat X_i,\hat\Delta_i)$ defined by the equality $\mld_{\hat x_i}(\hat X_i,\hat\Delta_i,\hat\fm_i^{t_i})=0$. Note that $d_E(X_i)=\ord_{\hat E}\hat\Delta_i$ whenever $a_{\hat E}(\hat X_i,\hat\Delta_i)=1$. In particular, for a small positive rational number $\varepsilon$, the triple $(\hat X_i,(1-\varepsilon)\hat\Delta_i,\hat\fm_i^{\varepsilon c})$ retains $\mld_{\hat x_i}(\hat X_i,(1-\varepsilon)\hat\Delta_i,\hat\fm_i^{\varepsilon c})=1$, and every divisor $\hat E=E\times_X\hat X$ that computes this minimal log discrepancy also computes $\mld_{\hat x_i}(\hat X_i,\hat\Delta_i)=1$ and satisfies $d_E(X_i)/\ord_E\fm_i=c$.

Choose one such $E$ and approximate $\hat\Delta_i$ by a $\bQ$-divisor $\Gamma_i$ on $X_i$ in such a way that $a_E(X_i,(1-\varepsilon)\Gamma_i,\fm_i^{\varepsilon c})=1$ by the same argument as in the proof of Proposition~\ref{prp:ct}. That is, for $\hat\Delta=q\hat D$, $\Gamma_i=qG_i$ with the strict transform $G_i$ of the divisor defined by the general member of the contraction $\fa$ of $\hat\fa=\sO_{\hat X}(-\hat D)+\hat\fm^l$, where $\hat\fm$ is the maximal ideal in $\sO_{\hat X}$. Then $(X_i,(1-\varepsilon)\Gamma_i,\fm_i^{\varepsilon c})$ is klt and canonical, and every divisor $E$ that computes $\mld_{x_i}(X_i,(1-\varepsilon)\Gamma_i,\fm_i^{\varepsilon c})=1$ has $a_{\hat E}(\hat X_i,\hat\Delta_i)=1$ and attains the minimum $d_E(X_i)/\ord_E\fm_i=c$. Set an $\fm_i$-primary ideal $\fb_i=\sO_{X_i}(-r_iG_i)+\fm_i^l$ for a large integer $l$, where $r_i$ is a positive integer such that $r_iG_i$ is Cartier. Now take a $\bQ$-factorial terminalisation $W$ of the canonical pair $(X_i,\fb_i^{(1-\varepsilon)q/r_i}\fm_i^{\varepsilon c})$ and run the $K_W$-MMP over $X_i$ to produce a divisorial contraction $E_{i+1}\subset X_{i+1}\to x_i\in X_i$ as the last step. The exceptional divisor $E_{i+1}$ satisfies $a_{\hat E_{i+1}}(\hat X_i,\hat\Delta_i)=1$ and $d_{E_{i+1}}(X_i)/\ord_{E_{i+1}}\fm_i=c$. For all $j>i$, the divisor $E_j$ obtained in the future has $a_{\hat {E_j}}(\hat X_i,\hat\Delta_i)=1$ and hence $c\le d_{E_j}(X_i)/\ord_{E_j}\fm_i$ by the minimality of $c$.
\end{proof}

\begin{theorem}\label{thm:sequence}
In Proposition~\textup{\ref{prp:sequence}}, $X_i$ is smooth at $x_i$ for some index $i$.
\end{theorem}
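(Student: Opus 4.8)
The plan is to attach to each germ $x_i\in X_i$ produced by Proposition~\ref{prp:sequence} an integer invariant, to prove that it strictly decreases once the sequence is restricted to suitable indices, and to observe that its least value is attained only at a smooth point. Accordingly I would argue by contradiction, assuming $X_i$ is singular at $x_i$ for every $i$ and producing an infinite strictly descending chain of non-negative integers.

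First I would reduce to the Gorenstein case. Each $x_i\in X_i$ is a terminal germ and each $X_{i+1}\to X_i$ is a divisorial contraction to the point $x_i$, so the classification of threefold divisorial contractions applies at every step; from it, Corollary~\ref{crl:nG} yields that $X_i$ is Gorenstein at $x_i$ for infinitely many $i$. Passing to this subsequence and replacing the maps by the composites $x_j\in X_j\to x_i\in X_i$ for Gorenstein indices $i<j$, I may assume that every $x_i\in X_i$ is a terminal Gorenstein, hence compound Du~Val, singularity with $C_i$ a regular curve through $x_i$.

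On such germs I would work with two invariants. The first is $m_i$, the smallest integer equal to $\ord_{E_j}\fm_i$ for infinitely many $j$, where $\fm_i$ is the maximal ideal defining $x_i$ and $E_j$ is viewed as a divisor over $X_i$; since $m_i$ does not increase with $i$, after a further passage to a subsequence I may assume it constant. The second is Mori's invariant $l_i$, the length of the cokernel of the natural map $\bigwedge^2\sI_i/\sI_i^{(2)}\to\omega_{X_i}\otimes\omega_{C_i}^{-1}$ with $\sI_i$ the ideal sheaf of $C_i$. It is a non-negative integer that vanishes exactly when $X_i$ is smooth at $x_i$: at a smooth point with regular $C_i$ the conormal sheaf $\sI_i/\sI_i^{(2)}$ is locally free of rank two, so its second exterior power is the line bundle $\det N_{C_i/X_i}^\vee=\omega_{X_i}\otimes\omega_{C_i}^{-1}$ and the map is an isomorphism, while conversely $l_i=0$ forces smoothness.

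The core is the strict inequality $l_j<l_i$ for the next Gorenstein index $j>i$, which I would obtain from Propositions~\ref{prp:G-G} and~\ref{prp:G-nG-G} according to whether the composite $x_j\in X_j\to x_i\in X_i$ runs only through Gorenstein points or through intermediate non-Gorenstein ones. Granting this, the standing assumption that no $x_i$ is smooth makes $\{l_i\}$ an infinite strictly decreasing sequence of non-negative integers, which is absurd; hence $X_k$ is smooth at $x_k$ for some $k$. The hard part will be the proof of these two propositions: one must read off from the explicit classification how the conormal sheaf $\sI_i/\sI_i^{(2)}$, the sheaf $\omega_{X_i}\otimes\omega_{C_i}^{-1}$, and the map between them transform under the contraction, and then bound the length of the cokernel. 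Here the minimality of $d_{E_{i+1}}(X_i)/\ord_{E_{i+1}}\fm_i$ guaranteed by Proposition~\ref{prp:sequence}, together with the normalization of $m_i$ to a constant, restricts which contractions of the classification can occur and makes the computation uniform; the intermediate non-Gorenstein steps of Proposition~\ref{prp:G-nG-G}, where the ambient germ is no longer a hypersurface singularity, are the most delicate to control.
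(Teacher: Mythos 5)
Your overall strategy --- pass to Gorenstein indices, normalise $m_i$, and run an infinite descent on Mori's invariant $l_i$ via Propositions~\ref{prp:G-G} and~\ref{prp:G-nG-G} --- is the same as the paper's, but there is a genuine gap: your argument does not handle terminal quotient singularities, and your contradiction hypothesis is too weak to exclude them. You assume only that every $x_i\in X_i$ is singular, and you quote Corollary~\ref{crl:nG} as giving infinitely many Gorenstein points; what that corollary actually gives is that infinitely many $x_i$ are \emph{either quotient singularities or} Gorenstein. A non-smooth terminal quotient singularity is singular but not Gorenstein, so under your hypothesis the sequence could a priori consist of quotient singularities from some point on, leaving no Gorenstein subsequence on which $l_i$ is even defined. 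The same problem resurfaces where you invoke Proposition~\ref{prp:G-nG-G}: its conclusion has a third alternative, namely that $x_j$ or $x_{j+1}$ is at most a quotient singularity, and its hypotheses (that the intermediate contractions keep general elephants) must be verified through Lemma~\ref{lem:G-nG}, whose conclusion again contains quotient-singularity escape clauses. None of these can be ruled out by ``all $x_i$ singular'' alone, so the strict descent $l_j<l_i$ does not follow from the cited propositions.

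The missing ingredient is Kawamata's theorem \cite{Ka96}, which the paper uses as its very first step: a threefold divisorial contraction to a non-smooth terminal quotient singularity $x_i\in X_i$ is the unique weighted blow-up, and $x_{i+1}\in X_{i+1}$ is at most a quotient singularity of index strictly less than $n_i$; iterating, one reaches a smooth point within $n_i$ steps. Consequently, if no $x_i$ were smooth, no $x_i$ could be a quotient singularity either, and it is this \emph{strengthened} contradiction hypothesis --- $x_i$ neither smooth nor a quotient singularity for all $i$ --- that makes Corollary~\ref{crl:nG} produce infinitely many Gorenstein indices, makes Lemma~\ref{lem:G-nG} yield that the relevant contractions keep general elephants, and kills alternative (iii) of Proposition~\ref{prp:G-nG-G}. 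With that step inserted, your descent goes through exactly as in the paper. Two minor remarks: the constancy of $m_i$ needs no passage to a subsequence, since $m_{i+1}\le m_i$ (Lemma~\ref{lem:mi}) already forces $m_i$ to stabilise; and your characterisation that $l_i=0$ precisely at smooth points, while correct by Mori's normal form $z_1z_4^{l_i}+f$, is not actually needed --- the contradiction only uses that the $l_i$ are non-negative integers.
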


This will be proved in the last section. Here we shall derive Theorem~\ref{thm:reduction} from it.

\begin{proof}[Proof of Theorems~\textup{\ref{thm:main}} to~\textup{\ref{thm:nakamura}} and~\textup{\ref{thm:reduction}} from Theorem~\textup{\ref{thm:sequence}}]
By virtue of Lemma~\ref{lem:reduction}, it suffices to prove Theorem~\ref{thm:reduction}. Take the function $f_i$ in $\sO_X$ which defines the Cartier divisor $n\Delta_i$ and construct the generic limit $\hat f$ on $\hat x\in \hat X$ of the sequence $\{f_i\}_{i\in\bN}$ following Section~\ref{sct:limit}. We use the notation in Section~\ref{sct:limit} in which $\hat X$ is the spectrum of the completion of the local ring $\sO_{X_K,x_K}$ associated with the germ $x_K\in X_K$ over an algebraically closed extension $K$ of $k$.

Take the pair $(\hat X,\hat\Delta)$ such that $n\hat\Delta$ is the Cartier divisor defined by $\hat f$, that is, $\hat\Delta$ is the generic limit of the sequence of $\Delta_i$. As explained prior to Lemma~\ref{lem:LM}, we may and shall assume that $(\hat X,\hat\Delta)$ is log canonical and has the smallest lc centre $\hat C$ of dimension one, which is regular. From the germ $x_K\in X_K$ and the pair $(\hat X,\hat\Delta)$, we construct by Proposition~\ref{prp:ct} a contraction $\mu\colon X_K'\to X_K$ from a $\bQ$-factorial terminal threefold isomorphic outside $x_K$ and an effective exceptional $\bQ$-divisor $B_K$ on $X_K'$ such that for the base change $\hat\mu\colon \hat X'\to\hat X$ of $\mu$ with $\hat B=B_K\times_{X_K}\hat X$ and the strict transform $\hat\Delta'$ of $\hat\Delta$, the pair $(\hat X',\hat\Delta'+\hat B)$ is crepant to $(\hat X,\hat\Delta)$, $\hat\Delta'$ is $\bQ$-Cartier and $\mld_{\hat\mu^{-1}(\hat x)}(\hat X',\hat\Delta')\ge1$. Let $\hat x'\in\hat X'$ denote the point over $\hat x$ that lies on the strict transform of $\hat C$. Then $\mld_{\hat x'}(\hat X',\hat\Delta')=1$ since $1\le\mld_{\hat\mu^{-1}(\hat x)}(\hat X',\hat\Delta')\le\mld_{\hat x'}(\hat X',\hat\Delta')\le1$ for the same reason as at the beginning of the proof of Proposition~\ref{prp:sequence}. Let $x_K'$ denote the point in $X_K'$ to which $\hat x'$ descends.

Starting with the germ $x_K'\in X_K'$ as $x_{K,0}\in X_{K,0}$ and the pair $(\hat X',\hat\Delta')$, Proposition~\ref{prp:sequence} supplies an infinite sequence of divisorial contractions $E_{K,n+1}\subset X_{K,n+1}\to x_{K,n}\in X_{K,n}$ for $n\in\bN$. It follows from Theorem~\ref{thm:sequence} that there exists an index $n$ such that $X_{K,n}$ is smooth at $x_{K,n}$. Set $y_K\in Y_K$ to be $x_{K,n}\in X_{K,n}$. Define the $\bQ$-divisor $\hat\Gamma$ on $\hat Y=Y_K\times_{X_K}\hat X$ to be the sum of the strict transform of $\hat\Delta'$ and the pull-back of $\hat B$ so that $(\hat Y,\hat\Gamma)$ is crepant to $(\hat X,\hat\Delta)$, and apply Theorem~\ref{thm:success} to the model $\pi_K\colon Y_K\to X_K$. If $\mld_{\hat x}(\hat X,\hat\Delta)=\mld_x(X,\Delta_i)$ for all $i$ after replacement of the family $\cF$, then the existence of the bound $l$ in Theorem~\ref{thm:reduction} follows from Lemma~\ref{lem:limit}.

Otherwise, we may assume that $\mld_x(X,\Delta_i)=\mld_{y_i}(Y_i,\Gamma_i)$ for all $i$ with the notation in Theorem~\ref{thm:success}. Since $Y_K$ is smooth at $y_K$, we may assume that $Y_i$ is smooth at $y_i$. Our preceding work \cite[theorem~1.4]{Kaxv} provides a bound $l'$ such that for every $i$ there exists a divisor $E_i$ over $Y_i$ which computes $\mld_{y_i}(Y_i,\Gamma_i)=\mld_x(X,\Delta_i)$ and has $a_{E_i}(Y_i)\le l'$. For the log canonical threshold $t$ of the maximal ideal $\hat\fm$ on $(\hat X,\hat\Delta)$, we may assume from Theorem~\ref{thm:dFEM} that $(X,\Delta_i,\fm^t)$ is log canonical. Then
\[
t\ord_{E_i}\fm\le a_{E_i}(X,\Delta_i)=\mld_x(X,\Delta_i)\le\mld_{\hat x}(\hat X,\hat\Delta)\le1,
\]
where the middle inequality is an immediate consequence of the construction of the generic limit \cite[remark~4.3]{K21} and the last follows from Lemma~\ref{lem:LM}.

Write $K_{Y_K}=\pi_K^*K_{X_K}+\sum_jd_jF_{jK}$ with discrepancies $d_j=d_{F_{jK}}(X_K)$. Let $d$ be the maximum of $d_j$. The expression of $K_{Y_K}$ induces the expression $K_{Y_i}=\pi_i^*K_X+\sum_jd_jF_{ji}$ on the fibre $Y_i$. Thus
\[
a_{E_i}(X)=a_{E_i}(Y_i)+\sum_jd_j\ord_{E_i}F_{ji}\le l'+d\ord_{E_i}\fm\le l'+dt^{-1},
\]
which yields a bound $l$.
\end{proof}

\section{Threefold divisorial contractions}\label{sct:divcont}
The proof of Theorem~\ref{thm:sequence} requires the explicit study of threefold divisorial contractions mainly due to the author \cite{K01}, \cite{K02}, \cite{K03}, \cite{K05}.

\begin{definition}\label{dfn:divcont}
A \textit{divisorial contraction} $\pi\colon Y\to X$ means a birational contraction between terminal varieties such that the anti-canonical divisor $-K_Y$ is relatively ample and such that the exceptional locus $E$ is a prime divisor. The discrepancy $d_E(X)$ of the exceptional divisor is called the \textit{discrepancy} of $\pi$.
\end{definition}

The discrepancy $d=d_E(X)$ is the positive rational number defined by the equality $K_Y=\pi^*K_X+dE$. It follows that $E$ is $\bQ$-Cartier. If $X$ is $\bQ$-factorial, then so is $Y$. The converse is false such as the blow-up of the ordinary double point given in $\bA^4$ by $x_1x_2+x_3x_4$.

Our object is a threefold divisorial contraction $\pi\colon E\subset Y\to x\in X$ which contracts the divisor $E$ to a point $x$. We shall review some of the results on a classification of $\pi$. The reader may refer to \cite[chapter~3]{K24} for details. In this section, we assume the ground field $k$ to be the field $\bC$ of complex numbers in order to describe $\pi$ analytically.

Let $x\in X$ be the germ of a terminal threefold, which is an isolated singularity. Let $n$ denote the index of $x\in X$. The reader may refer to \cite[chapter~2]{K24} for a classification of $x\in X$. If $n=1$ or equivalently $x\in X$ is Gorenstein, then the general hyperplane section $x\in H$ is a Du Val singularity and $x\in X$ is called a \textit{compound Du Val} (\textit{cDV}) singularity, due to Reid \cite{Re83}. The germ $x\in X$ is said to be of \textit{type c$A_m$}, \textit{c$D_m$} or \textit{c$E_m$} in accordance with the type $A_m$, $D_m$ or $E_m$ of $x\in H$. If $n\ge2$, then $x\in X$ is a cyclic quotient of the index-one cover and thus it is the quotient of a cDV singularity $\tilde x\in\tilde X$ by the cyclic group $\bZ_n$ of order $n$. It is of \textit{type $cA/n$}, \textit{c$Ax/4$}, \textit{c$Ax/2$}, \textit{c$D/3$}, \textit{c$D/2$} or \textit{c$E/2$} as in \cite[p.541]{KM92} in accordance with the type of $\tilde x\in\tilde X$ and the action of $\bZ_n$, due to Mori \cite{Mo85}. In the analytic embedding $\tilde x\in\tilde X\subset\fD^4$ as a hypersurface singularity, the intersection number $(\tilde X\cdot l)_{\tilde x}$ with the axis $l$ of non-free points is called the \textit{axial multiplicity} of $x\in X$. See \cite[definition~2.5.20]{K24} for the explicit description.

The germ $x\in X$ is deformed to a collection of cyclic quotient singularities called \textit{fictitious singularities} from $x\in X$. The set of fictitious singularities is called the \textit{basket} of $x\in X$. Unless $x\in X$ is of type c$Ax/4$, the basket consists of $e$ singularities of index $n$ where $e$ stands for the axial multiplicity. If it is of type c$Ax/4$, then the basket consists of one singularity of index four and $(e-1)/2$ singularities of index two.

We recall the following basic property.

\begin{lemma}[{\cite[lemma~5.1]{Ka88}}]\label{lem:class}
Let $x\in X$ be the germ of a terminal threefold. Then for every $\bQ$-Cartier divisor $D$ on $x\in X$, there exists an integer $l$ such that $D\sim lK_X$.
\end{lemma}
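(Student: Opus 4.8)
The plan is to recast the lemma as a computation of the torsion in the local divisor class group and to reduce it to the Gorenstein case via the index-one cover. Since a terminal threefold singularity is isolated, $X$ is smooth away from $x$, so $\mathrm{Cl}(X)\cong\mathrm{Pic}(U)$ for the punctured germ $U=X\setminus\{x\}$, and on the germ the Picard group vanishes; hence a Weil divisor is Cartier precisely when its class in $\mathrm{Cl}(X)$ is trivial, and $\bQ$-Cartier precisely when its class is torsion. As $nK_X$ is Cartier for the index $n$, the class $[K_X]$ is torsion of order $n$. Consequently the assertion that every $\bQ$-Cartier $D$ satisfies $D\sim lK_X$ is equivalent to
\[
\mathrm{Cl}(X)_{\mathrm{tors}}=\langle[K_X]\rangle,
\]
which is what I would prove.

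Let $p\colon\tilde X\to X$ be the index-one cover, with closed point $\tilde x$ and $\tilde U=\tilde X\setminus\{\tilde x\}$. It is the connected cyclic cover attached to the class $[K_X]$ of exact order $n$, étale over $U$ with Galois group $\bZ_n$, and $\tilde X$ is a Gorenstein terminal, hence compound Du Val, threefold with $p^*K_X=K_{\tilde X}\sim0$. I would use two facts. First, $\mathrm{Cl}(\tilde X)$ is torsion-free: a nontrivial torsion class on $\tilde X$ would yield a nontrivial connected cyclic étale cover of $\tilde U$, hence a nontrivial finite quotient of the étale fundamental group of $\tilde U$; but $\tilde X$ is an isolated hypersurface singularity of dimension three, so its link is simply connected by Milnor over $\bC$, and in general by the Lefschetz principle, forcing that fundamental group to be trivial. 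Second, $\ker(p^*\colon\mathrm{Cl}(X)\to\mathrm{Cl}(\tilde X))$ has order $n$: descent along the $\bZ_n$-torsor $\tilde U\to U$ identifies this kernel with $H^1(\bZ_n,\Gamma(\tilde U,\sO_{\tilde U}^*))$, and writing a unit on $\tilde U$ (which extends to $\tilde X$ by normality) as a constant times an element of $1+\tilde\fm$, the unipotent factor is uniquely divisible and so acyclic in characteristic zero, while the constant factor $k^*$ carries the trivial action and contributes its $n$-torsion $\mu_n\cong\bZ_n$. Since $[K_X]$ lies in this kernel and already generates a subgroup of order $n$, it generates the whole kernel, so $\ker p^*=\langle[K_X]\rangle$.

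Combining the two facts finishes the proof: a torsion class $[D]\in\mathrm{Cl}(X)$ is sent by $p^*$ into the torsion-free group $\mathrm{Cl}(\tilde X)$, hence to $0$, so $[D]\in\ker p^*=\langle[K_X]\rangle$ and $D\sim lK_X$. This specialises correctly to the Gorenstein case $n=1$, where $\tilde X=X$, $[K_X]=0$, and every $\bQ$-Cartier divisor is Cartier. I expect the main obstacle to be the exact determination of $\ker p^*$: one must verify that no classes beyond the powers of $[K_X]$ die on the cover, for which the connectedness of $\tilde X$ and the vanishing of the higher group cohomology of the unipotent units are the essential points, the torsion-freeness of the compound Du Val class group being the remaining geometric input.
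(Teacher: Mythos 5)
Your proposal is correct, and it is essentially a reconstruction of the classical argument behind \cite[lemma~5.1]{Ka88}, which the paper cites without reproving: pass to the index-one cover, use that the cover is an isolated cDV (hence hypersurface) singularity whose link is simply connected by Milnor to get torsion-freeness of its class group, and compute the kernel of $p^*$ by Galois descent as $H^1(\bZ_n,\Gamma(\tilde U,\sO_{\tilde U}^*))\cong\mathrm{Hom}(\bZ_n,k^*)$, generated by $[K_X]$. The one technical point worth making explicit is that both the unique divisibility of $1+\tilde\fm$ and the identification of \'etale covers of $\tilde U$ with covers of the link require working on the analytic (or complete) germ rather than the Zariski local ring; the statement for the algebraic germ then follows from the injectivity of $\mathrm{Cl}(\sO_{X,x})\to\mathrm{Cl}(\hat\sO_{X,x})$ for excellent normal local rings.
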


Let $\pi\colon E\subset Y\to x\in X$ be a threefold divisorial contraction which contracts the divisor $E$ to the point $x$. Let $a/n$ denote the discrepancy $d_E(X)$ of $\pi$, that is, $K_Y=\pi^*K_X+(a/n)E$. By deforming the singularities of $Y$ each by each, one obtains the basket $\{y_\iota\ \textrm{of type $\frac{1}{r_\iota}(1,-1,b_\iota)$}\}_{\iota\in I_0}$ of fictitious singularities from $Y$. For each index $\iota\in I_0$, we take an integer $e_\iota$ such that $E\sim e_\iota K_Y$ at $y_\iota$ by virtue of Lemma~\ref{lem:class}. Possibly replacing $b_\iota$ by $r_\iota-b_\iota$, we may and shall assume that $v_\iota=\overline{e_\iota b_\iota}\le r_\iota/2$, where $\bar l$ denotes the residue of $l$ modulo $r_\iota$. Set $J=\{(r_\iota,v_\iota)\}_{\iota\in I}$ indexed by the subset $I$ of $I_0$ consisting of $\iota$ with $\bar e_\iota\neq0$. The following is a numerical classification of $\pi$.

\begin{theorem}[{\cite[theorem~1.1]{K05}}, {\cite[theorem~3.2.2]{K24}}]
The divisorial contraction $\pi$ is of one of the types in Table~\textup{\ref{tbl:numerical}}. The case when $J$ is of form $\{(r_1,1),(r_2,1)\}$ is divided so that $\pi$ is of type \textup{o3} if $J$ comes from two non-Gorenstein points of $Y$ and of type \textup{e2} or \textup{e3} if $J$ comes from one non-Gorenstein point of $Y$.
\begin{table}[ht]\caption{Numerical classification}\label{tbl:numerical}
\begin{tabular}{@{}cl@{}}
\textup{type}&\multicolumn{1}{c}{$J$}\\
\hline
\textup{o1}  &$\emptyset$\\
\textup{o2}  &$(r,1)$\\
\textup{o3}  &$(r_1,1),(r_2,1)$\\
             &\\
             &\\
             &\\
             &\\
             &
\end{tabular}
\qquad
\begin{tabular}{@{}cl@{}}
\textup{type}&\multicolumn{1}{c}{$J$}\\
\hline
\textup{e1}  &$(r,2)$\\
\textup{e2}  &$(r,1),(r,1)$\\
\textup{e3}  &$(2,1),(4,1)$\\
             &\\
\textup{e5}  &$(7,3)$\\
\textup{e6}  &$(8,3)$\\
\textup{e7}  &$(4,2),(r,1)$\\
\textup{e8}  &$(5,2),(2,1)$
\end{tabular}
\qquad
\begin{tabular}{@{}cl@{}}
\textup{type}&\multicolumn{1}{c}{$J$}\\
\hline
\textup{e9}  &$(5,2),(3,1)$\\
\textup{e10} &$(5,2),(4,1)$\\
\textup{e11} &$(6,2),(2,1)$\\
\textup{e12} &$(7,2),(2,1)$\\
\textup{e13} &$(2,1),(2,1),(r,1)$\\
\textup{e14} &$(2,1),(3,1),(3,1)$\\
\textup{e15} &$(2,1),(3,1),(4,1)$\\
\textup{e16} &$(2,1),(3,1),(5,1)$
\end{tabular}
\end{table}
\end{theorem}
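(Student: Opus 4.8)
This is a numerical classification, so my plan is to run Reid's singular Riemann--Roch for the contraction against the vanishing it supplies, distil a single arithmetic constraint on the multiset $J=\{(r_\iota,v_\iota)\}_{\iota\in I}$, enumerate its solutions, and cut them down to Table~\ref{tbl:numerical}. First I would record the two inputs. Since $-K_Y$ is $\pi$-ample and $K_Y=\pi^*K_X+(a/n)E$ with $a/n>0$, the divisor $-E$ is $\pi$-ample; hence for every $i\ge0$ the $\bQ$-divisor $-iE-K_Y=-\pi^*K_X-(i+a/n)E$ is $\pi$-ample, and relative Kawamata--Viehweg vanishing yields $R^{>0}\pi_*\sO_Y(-iE)=0$. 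As $Y$ and $X$ have rational singularities and $E$ is contracted to $x$, the sheaves $\pi_*\sO_Y(-iE)$ form a decreasing filtration of $\sO_X$ by $\fm$-primary ideals, and the invariants to be computed are the colengths $c(i)=\dim_k\sO_X/\pi_*\sO_Y(-iE)$. The second input is Reid's orbifold Riemann--Roch for the contraction, which presents $c(i)$ as a cubic quasi-polynomial in $i$ whose leading coefficients are fixed by the intersection numbers of $E$, $K_Y$ and $c_2(Y)$ together with $n$ and $a/n$, plus a periodic correction that is a sum of Dedekind-type terms over the basket, the term at $y_\iota$ being determined by $(r_\iota,b_\iota)$ and the local class $e_\iota$ of $E$ read off via Lemma~\ref{lem:class}.

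The heart of the matter is the periodic part. At the basket point $y_\iota$ of type $\frac1{r_\iota}(1,-1,b_\iota)$ the divisor $-E$ has local class $-e_\iota$, and since $\overline{(-e_\iota)b_\iota}=r_\iota-v_\iota$ while the Dedekind summand $\overline{jb_\iota}(r_\iota-\overline{jb_\iota})$ is invariant under $j\mapsto-j$, the normalisation $v_\iota=\overline{e_\iota b_\iota}\le r_\iota/2$ makes the first-order contribution depend only on $(r_\iota,v_\iota)$ and equal a positive multiple of $\frac{v_\iota(r_\iota-v_\iota)}{r_\iota}$. Comparing the Riemann--Roch expression for the first differences $c(i+1)-c(i)$ with their meaning as colengths of successive $\fm$-primary ideals, and imposing the non-negativity and integrality that these colengths must satisfy, the cubic main terms absorb into integers and the surviving constraint takes the shape
\[
\sum_{\iota\in I}\frac{v_\iota(r_\iota-v_\iota)}{r_\iota}<2,
\]
where terminality forces $\gcd(b_\iota,r_\iota)=1$ and $1\le v_\iota\le r_\iota/2$. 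I would then check that each row of Table~\ref{tbl:numerical} satisfies this strict bound and that, exactly as for the classical inequality $\sum(1-1/r)<2$, it admits only finitely many multisets as solutions.

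It then remains to enumerate and prune. Listing all admissible multisets with $\sum v_\iota(r_\iota-v_\iota)/r_\iota<2$ produces the rows o1--o3 and e1--e16 together with a handful of spurious candidates; the absence of any row ``e4'' is a reminder that the numerics alone do not close the problem. The spurious solutions are excluded by the finer geometric constraints that a genuine divisorial contraction must meet --- compatibility of $J$ with an actual terminal basket of $Y$, realisability of the discrepancy $a/n$, and the existence of the contraction itself --- which is where I would appeal to the explicit classifications \cite{K01}, \cite{K02}, \cite{K03}, \cite{K05}. Finally, the coincidence $J=\{(r_1,1),(r_2,1)\}$ is resolved not by $J$ but by the unrefined basket of $Y$: if the two pairs arise from two distinct non-Gorenstein points of $Y$ the contraction is of type o3, whereas if they arise from a single non-Gorenstein point --- whose basket is $e$ copies of an index-$r$ point, or one index-four point together with $(e-1)/2$ index-two points in the c$Ax/4$ case --- one lands in type e2 or e3 respectively.

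The main obstacle is the middle step: isolating the clean inequality requires controlling the cubic intersection-theoretic main term and the $\fm$-adic colengths simultaneously, and the residual spurious solutions genuinely cannot be ruled out by counting. Excluding them, and verifying that each surviving row is actually realised by a contraction, is the substance of the threefold divisorial-contraction classification, so a fully self-contained proof would necessarily retrace large portions of \cite{K01}, \cite{K02}, \cite{K03} and \cite{K05}. By comparison the enumeration and the o3/e2/e3 bookkeeping are routine once the constraint and the basket dictionary are available.
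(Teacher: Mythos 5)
This theorem is not proved in the paper at all --- it is quoted from \cite[theorem~1.1]{K05} and \cite[theorem~3.2.2]{K24} --- and your sketch is a faithful high-level reconstruction of the method of those references: relative Kawamata--Viehweg vanishing plus Reid's singular Riemann--Roch applied to the filtration $\pi_*\sO_Y(-iE)$, yielding a Dedekind-sum bound of the shape $\sum_{\iota\in I}v_\iota(r_\iota-v_\iota)/r_\iota<2$ supplemented by divisibility constraints (from $nK_Y\sim aE$ and $E\sim e_\iota K_Y$ one gets $ae_\iota\equiv n\pmod{r_\iota}$, which for $n=1$ forces $\gcd(v_\iota,r_\iota)=1$ and is needed to cut down the solution set), then enumeration, exclusion of the residual numerical candidates by the geometric classification (the absence of any type e4 records exactly such an exclusion), and the o3 versus e2/e3 division read off from the basket of a single non-Gorenstein point, exactly as you describe. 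Your closing caveat is also the correct assessment: a self-contained argument would amount to retracing \cite{K01}, \cite{K02}, \cite{K03}, \cite{K05}, which is why the present paper, like your proposal, ultimately treats the statement as a citation rather than proving it.
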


We say that $\pi$ is of \textit{ordinary type} if it is of type o1, o2 or o3 whilst we say that it is of \textit{exceptional type} if it is of type e1, \ldots, e15 or e16. Since a singularity $y\in Y$ contributes to $J$ only if $E$ is not Cartier at $y$, we make the following notion.

\begin{definition}\label{dfn:hidden}
A singularity $y\in Y$ is called a \textit{hidden} singularity if $E$ is Cartier at $y$. It follows from the relation $nK_Y\sim aE$ that the index of a hidden singularity is a divisor of the index $n$ of $x\in X$.
\end{definition}

The numerical classification is obtained by computing the dimension $d(l)$ of the complex vector space
\[
V_l=\pi_*\sO_Y(-lE)/\pi_*\sO_Y(-(l+1)E)
\]
for $l\in\bZ$ by means of the singular Riemann--Roch formula \cite[chapter~III]{Re87}. Below we collect some data of $d(l)$ in the Gorenstein case that will be used later.

\begin{lemma}\label{lem:dim}
Suppose that $x\in X$ is Gorenstein, that is, $n=1$.
\begin{enumerate}
\item\label{itm:dim-ord}
If $\pi$ is of ordinary type as in Table~\textup{\ref{tbl:numerical}}, then $d(l)=1+\rd{l/r_1}+\rd{l/r_2}$ for $0\le l<a$, where we set $(r_1,r_2)=(1,r)$ if type \textup{o2} and set $(r_1,r_2)=(1,1)$ if type \textup{o1}. In this case, $a<r_1+r_2$ unless $x\in X$ is smooth.
\item\label{itm:dim-exc}
If $\pi$ is of exceptional type and $a\ge2$, then the pair $(\pi,d(1))$ is $(\textup{e1},1)$, $(\textup{e2},1)$, $(\textup{e3},1)$, $(\textup{e5},0)$ or $(\textup{e9},0)$.
\item\label{itm:dim-e1}
If $\pi$ is of type \textup{e1} and $a=4$, then $d(2)=1$ or $2$.
\end{enumerate}
\end{lemma}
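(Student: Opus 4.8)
The engine of the whole lemma is the reduction of $d(l)$ to an Euler characteristic on the exceptional surface $E$. From the conormal sequence
\[
0\to\sO_Y(-(l+1)E)\to\sO_Y(-lE)\to\sO_E(-lE|_E)\to0
\]
and relative Kawamata--Viehweg vanishing I would kill all higher direct images: since $Y$ is terminal, hence klt, and $-lE=K_Y+(-\pi^*K_X-(a+l)E)$ with $-(a+l)E$ being $\pi$-ample for $l\ge0$ (here $K_Y=\pi^*K_X+aE$ as $n=1$), one gets $R^{>0}\pi_*\sO_Y(-lE)=0$ and thus $R^{>0}\pi_*\sO_E(-lE|_E)=0$, so $d(l)=\dim H^0(E,\sO_E(-lE|_E))$. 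Because $Y$ has rational singularities we have $\chi(\sO_E)=1$, and Serre duality together with the anti-ampleness of $-lE|_E$ forces $H^2(E,\sO_E(-lE|_E))=0$; hence $d(l)=\chi(E,\sO_E(-lE|_E))$ for every $l\ge0$.

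Next I would feed this into Reid's singular Riemann--Roch \cite{Re87} on the normal surface $E$. Writing $A=-E|_E$ and $K_E=(K_Y+E)|_E$, the cyclic quotient singularities of $E$ are exactly the basket points $y_\iota$, whose local contributions are controlled by the pairs $(r_\iota,v_\iota)$ recorded in $J$. Numerically on curves contracted to $x$ one has $K_Y|_E\equiv aA\cdot(-1)$, so $K_E\equiv-(a+1)A$ and the formula takes the shape
\[
d(l)=1+\tfrac12\,l(l+a+1)A^2+\sum_\iota c_\iota(l),
\]
where each $c_\iota(l)$ is the standard periodic (Dedekind-sum type) correction depending only on $(r_\iota,v_\iota)$ and on $\bar l\bmod r_\iota$, with $\sum_\iota c_\iota(0)=0$. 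The self-intersection $A^2=(E^3)>0$ is not free: once the basket and the discrepancy $a$ are fixed, integrality of $d(l)$ and the constraints guaranteeing that the contraction exists pin it down, exactly as in the proof of the numerical classification. This reduces the lemma to an explicit evaluation per type.

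For part~\eqref{itm:dim-ord} the ordinary contractions are (weighted) blow-ups, so $d(l)$ is the dimension of the weighted-degree-$l$ graded piece of the associated ring; equivalently, each basket point has $v_\iota=1$, and in the range $0\le l<a$ the quadratic term $\tfrac12 l(l+a+1)A^2$ and the weight-one corrections recombine into $\rd{l/r_1}+\rd{l/r_2}$, giving $d(l)=1+\rd{l/r_1}+\rd{l/r_2}$ with the stated conventions for types o1, o2. The inequality $a<r_1+r_2$ is the one genuinely new assertion: I would obtain it by comparing the discrepancy with the weights, since for a weighted blow-up of a hypersurface germ $a$ equals the total weight minus the weight of the defining equation minus one, which is strictly below $r_1+r_2$ unless that equation is trivial, i.e.\ unless $x\in X$ is smooth (where the ordinary blow-up gives $a=2=r_1+r_2$). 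This strict bound, together with the correct bookkeeping of the corrections, is where I expect the real work to sit.

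For parts~\eqref{itm:dim-exc} and~\eqref{itm:dim-e1} I would run the same evaluation over the finite list of exceptional types. The hypothesis $a\ge2$ is very restrictive in the Gorenstein case: through $K_E\equiv-(a+1)A$ and the requirement that $d(1)=1+\tfrac12(a+2)A^2+\sum_\iota c_\iota(1)$ be a non-negative integer, only the types e1, e2, e3, e5, e9 survive the numerical constraints, with $d(1)=1$ for the first three and $d(1)=0$ for e5 and e9. Finally, for type e1 with $a=4$ the same formula determines $d(2)$ up to the residual discrete choice in the basket datum $(r,2)$, which yields $d(2)=1$ or $2$. Thus parts~\eqref{itm:dim-exc} and~\eqref{itm:dim-e1} become finite verifications once the Riemann--Roch corrections of part~\eqref{itm:dim-ord} are in hand.
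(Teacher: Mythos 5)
Your framework---kill the higher direct images by relative Kawamata--Viehweg vanishing, identify $d(l)$ with an Euler characteristic, and evaluate by Reid's singular Riemann--Roch---is exactly the machinery on which this lemma rests, and in fact the paper does not redo any of it: its proof simply quotes the output of those computations, namely \cite[lemma~3.3.17]{K24} for item~(\ref{itm:dim-ord}), \cite[remark~2.1]{K03} for the bound $a<r_1+r_2$, \cite[theorem~4.5]{K01} for item~(\ref{itm:dim-exc}), and a direct evaluation of the formula of \cite[lemma~3.3.7]{K24} for item~(\ref{itm:dim-e1}). Your execution, however, contains a genuine flaw in the setup. The basket of $Y$ consists of \emph{fictitious} singularities, obtained by deforming the singularities of $Y$; they are not actual cyclic quotient points of $Y$, let alone of the surface $E$ (the singularities of $Y$ on $E$ may be non-cyclic cDV points, and the singularities of $E$ are yet another matter). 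So orbifold Riemann--Roch on $E$ ``with singular points equal to the basket points'' is unjustified. The computation in the cited works applies singular Riemann--Roch on the threefold $Y$ to $\chi(\sO_Y(-lE))$ and $\chi(\sO_Y(-(l+1)E))$ and subtracts, the pairs $(r_\iota,v_\iota)$ entering through Reid's contribution terms; one must also take care that the quotient $\sO_Y(-lE)/\sO_Y(-(l+1)E)$ need not coincide with $\sO_E(-lE|_E)$ at points where $E$ fails to be Cartier.

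Beyond this, the steps carrying the actual content of the lemma are asserted rather than proved. In (\ref{itm:dim-ord}), your argument for $a<r_1+r_2$ is circular: for the weighted blow-up of Theorem~\ref{thm:clsA}(\ref{itm:clsA-ord}) with $n=1$ the weights are $(r_1,r_2,a,1)$ and $\wt f=r_1+r_2$, so ``discrepancy equals total weight minus weight of the equation minus one'' returns $a=a$ and yields no inequality. The genuine argument at a c$A$ point is that the monomial $x_3^{(r_1+r_2)/a}$ appears in $f$ while a singular germ forces $f$ to have no linear term, whence $(r_1+r_2)/a\ge2$; moreover, ordinary type over a singular Gorenstein point also occurs with $x\in X$ of type c$D$ (type o3, Theorem~\ref{thm:notmin}(\ref{itm:notmin-ord})), a case which your hypersurface weighted-blow-up picture does not cover at all and for which the paper invokes \cite[remark~2.1]{K03}. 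In (\ref{itm:dim-exc}) and (\ref{itm:dim-e1}), the list of surviving types and the values $d(1)\in\{0,1\}$ and $d(2)\in\{1,2\}$ are precisely the finite verifications at issue, and you never perform them: one needs the basket data of Table~\ref{tbl:numerical}, the value of $E^3$, and the explicit correction terms for each of e1, e2, e3, e5, e9 (the restriction to that list can be read off from Table~\ref{tbl:notmin} with $n=1$, which excludes e11, but your claim that it follows from non-negativity and integrality of $d(1)$ alone amounts to redoing the analysis of \cite{K01} and is nowhere substantiated). As written, the proposal describes the correct strategy but establishes neither the formula nor the inequality in (\ref{itm:dim-ord}), nor the numerical values in (\ref{itm:dim-exc}) and (\ref{itm:dim-e1}).
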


\begin{proof}
In the first item, the value $d(l)$ is found in \cite[lemma~3.3.17]{K24} and the estimate of $a$ follows from \cite[remark~2.1]{K03}. The second item is in \cite[theorem~4.5]{K01}, where $d(1)$ equals $D(2)-1$. In the third item, one can directly compute $d(2)$ from the formula in \cite[lemma~3.3.7]{K24}, where our $d(l)$ equals $d(-l)$ there, as $d(2)=10/r-B_\iota(4)+B_\iota(6)$, which equals two if $r=5$ and equals one if $r>5$.
\end{proof}

\begin{definition}
The \textit{general elephant} of a normal variety $V$ means the general member of the anti-canonical system $\abs{-K_V}$. The \textit{general elephant} of a germ $v\in V$ means the general member of $\abs{-K_V}$ passing through the point $v$.
\end{definition}

Reid pointed out that the general elephant $x\in S$ of an arbitrary terminal threefold singularity $x\in X$ is a Du Val singularity. A geometric classification of $\pi$ is established together with the settlement of the general elephant conjecture as below. In fact, the conjecture holds for an arbitrary threefold birational contraction such that the central fibre is irreducible \cite{KM92}.

\begin{theorem}[{\cite[theorem~1.7]{K03}}, {\cite[theorem~1.5]{K05}}]
Let $\pi\colon E\subset Y\to x\in X$ be a threefold divisorial contraction to the germ $x\in X$ which contracts the divisor $E$ to the point $x$. Then the general elephant $S$ of $Y$ has only Du Val singularities.
\end{theorem}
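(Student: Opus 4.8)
The plan is to relate the general elephant $S$ of $Y$ to the general elephant $S_X$ of the base germ $x\in X$ and to exploit Reid's theorem that $x\in S_X$ is Du Val. Since $S$ is a general member of $\abs{-K_Y}$, we have $K_Y+S\sim0$, so adjunction gives $K_S=(K_Y+S)|_S\sim0$; the surface $S$ is thus Gorenstein with trivial dualising sheaf along the part lying over $x$, and it suffices to prove that $S$ has only canonical singularities, since a Gorenstein canonical surface singularity is Du Val. The reduction to the base proceeds by showing that $\pi$ induces an isomorphism $\pi_*\sO_Y(-K_Y)\xrightarrow{\sim}\sO_X(-K_X)$, so that pushing forward identifies $\abs{-K_Y}$ with $\abs{-K_X}$ and realises $S$ as the strict transform of a general $S_X\in\abs{-K_X}$. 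The restriction $\pi|_S\colon S\to S_X$ is then a birational contraction onto a Du Val surface, and the canonicity of $S$ follows once one knows that $(Y,S)$ is canonical in a neighbourhood of the fibre over $x$.

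First I would establish the vanishing $R^1\pi_*\sO_Y(-K_Y)=0$ from relative Kawamata--Viehweg vanishing, writing $-K_Y=K_Y+(-2K_Y)$ with $-2K_Y$ relatively ample and using that $Y$ is terminal, in order to guarantee that $\abs{-K_Y}$ on the germ is cut out by $\abs{-K_X}$ and that no extraneous base components are introduced along $E$. Combined with Lemma~\ref{lem:class}, which controls the local class group at each terminal point of $Y$, this lets me track how a general $S_X$ passes through the singularities of $Y$: away from $E$ the section $S$ agrees with $S_X$, while along $E$ its behaviour is governed by the local relation $E\sim e_\iota K_Y$ at each fictitious singularity $y_\iota$ of type $\frac{1}{r_\iota}(1,-1,b_\iota)$.

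The heart of the argument is the analysis at the non-Gorenstein points of $Y$, and this is where I expect the main obstacle to lie. At a Gorenstein point the conclusion is essentially Kollár--Mori's general elephant theorem \cite{KM92} for threefold contractions with irreducible central fibre, applied to $\pi$; but at a cyclic quotient point $y_\iota$ one must verify directly that the general member of $\abs{-K_Y}$ meets the quotient singularity in a Du Val section. This forces a case division along the numerical classification of Table~\ref{tbl:numerical}: for each type o1--o3 and e1--e16 one writes $\pi$ in an explicit local model, a weighted blow-up or a refinement thereof, and computes the general elephant on the quotient chart, checking that its singularities are only of type $A$, $D$ or $E$. The dimension data of Lemma~\ref{lem:dim} feed into this by constraining which configurations of fictitious singularities occur for a given discrepancy $a/n$, thereby bounding the number of cases that survive.

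Finally, I would assemble the local verifications into the global statement by patching the Du Val descriptions along $E$ and invoking inversion of adjunction to promote the canonicity of each germ of $(Y,S)$ to canonicity of the pair near the whole fibre over $x$, whence $S$ is Du Val. The delicate point throughout is that the strict transform of $S_X$ can acquire new singularities precisely where $E$ fails to be Cartier, so the explicit control over $\pi$ supplied by the classification, rather than any purely formal adjunction argument, is indispensable.
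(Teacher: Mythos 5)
Your proposal founders on its central reduction step. You claim that $\pi_*\sO_Y(-K_Y)\xrightarrow{\sim}\sO_X(-K_X)$, so that the general elephant $S$ of $Y$ is the strict transform of a general member $S_X\in\abs{-K_X}$ and Du Val--ness of $S$ can be pulled back from Reid's theorem on the terminal germ $x\in X$. This is false in general, and the paper itself records exactly when it fails: by Theorem~\ref{thm:keepge}, $\pi$ keeps general elephants \emph{unless} it is of ordinary type with $a/n>1$ over a point of type c$A$, c$D$ or c$A/n$, or of exceptional type with $(\pi,a)$ equal to $(\textup{e1},4)$, $(\textup{e1},2)$, $(\textup{e2},2)$ or $(\textup{e3},3)$. (Even in the good cases with $n=1$ the correct equality is $\pi_*\sO_Y(-K_Y)=\fm\sO_X(-K_X)$, not $\sO_X(-K_X)$.) In the listed cases the image $\pi(S)$ is a \emph{special} member of $\abs{-K_X}$, so Reid's result on the general elephant of $x\in X$ gives no information about it, and the crepancy of $S\to\pi(S)$ runs in the useless direction: one can only deduce that $\pi(S)$ is Du Val \emph{after} knowing that $S$ is. Your relative Kawamata--Viehweg vanishing $R^1\pi_*\sO_Y(-K_Y)=0$ is fine but cannot repair this, because the failure is not cohomological: the linear system $\abs{-K_Y}$ genuinely does not match the pull-back of the general part of $\abs{-K_X}$ in these cases.

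There is also an ordering problem in what you propose to quote. The statement under review is cited by the paper from \cite{K03} and \cite{K05}; in those papers the general elephant theorem is proved \emph{jointly with} the geometric classification, by a direct and lengthy analysis of the anticanonical system on $Y$ (explicit construction of the general member chart by chart, driven by the singular Riemann--Roch computations underlying Table~\ref{tbl:numerical}). The dichotomy of Theorem~\ref{thm:keepge}, and the explicit weighted-blow-up descriptions such as Theorem~\ref{thm:clsA}, are \emph{outputs} of that analysis, so they are not available as inputs to a proof of the present theorem. Your last two paragraphs do gesture at the right mechanism --- a case division along the numerical classification with explicit local models --- but this is precisely the part of the argument that constitutes the actual content of \cite{K03} and \cite{K05}, and your sketch leaves it entirely unexecuted in the only cases where the formal reduction you set up does not already apply.
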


The image $x\in S_X$ of $S$ is a member of $\abs{-K_X}$ and the induced morphism $S\to S_X$ is crepant. In particular, $x\in S_X$ is also a Du Val singularity. The following notion describes the case when $S_X$ is the general elephant of $x\in X$.

\begin{definition}
Let $\pi\colon E\subset Y\to x\in X$ be a threefold divisorial contraction which contracts the divisor $E$ to a point $x$. Let $n$ denote the index of the germ $x\in X$ and let $\fm$ denote the maximal ideal in $\sO_X$ defining $x$. We say that $\pi$ \textit{keeps general elephants} if
\[
\pi_*\sO_Y(-K_Y)=
\begin{cases}
\fm\sO_X(-K_X)&\textrm{for}\ n=1,\\
\sO_X(-K_X)&\textrm{for}\ n\ge2.
\end{cases}
\]
This means that the strict transform $S$ of the general elephant $S_X$ of the germ $x\in X$ is the general elephant of $Y$ about $E$ \cite[theorem~4.1]{K05}.
\end{definition}

By definition, $\pi$ keeps general elephants whenever $a/n\le1$.

\begin{theorem}\label{thm:keepge}
The divisorial contraction $\pi$ keeps general elephants unless
\begin{enumerate}
\item
$\pi$ is of ordinary type, $a/n>1$ and $x\in X$ is of type c$A$, c$D$ or c$A/n$, or
\item
$\pi$ is of exceptional type, $n=1$ and $(\pi,a)$ is $(\textup{e1},4)$, $(\textup{e1},2)$, $(\textup{e2},2)$ or $(\textup{e3},3)$.
\end{enumerate}
\end{theorem}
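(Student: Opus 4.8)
The plan is to convert ``keeps general elephants'' into the vanishing of the graded dimensions $d(l)=\dim V_l$ attached to the filtration $\sO_X\supseteq\pi_*\sO_Y(-E)\supseteq\pi_*\sO_Y(-2E)\supseteq\cdots$, and then to read the answer off the numerical classification in Table~\ref{tbl:numerical} together with Lemma~\ref{lem:dim}. First I would record the elementary identification: since $\pi$ contracts $E$ to $x$, a function $f\in\sO_X$ vanishes at $x$ precisely when $\pi^*f$ vanishes along $E$, so $\pi_*\sO_Y(-lE)=\{f\in\sO_X\mid\ord_Ef\ge l\}$; in particular $\pi_*\sO_Y(-E)=\fm$ and $d(0)=1$. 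As $\pi$ keeps general elephants whenever $a/n\le1$, I assume throughout that $a/n>1$.

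The key reformulation comes next. In the Gorenstein case $n=1$ the projection formula applied to $-K_Y=\pi^*(-K_X)-aE$ gives $\pi_*\sO_Y(-K_Y)=\sO_X(-K_X)\cdot\pi_*\sO_Y(-aE)$, so $\pi$ keeps general elephants exactly when $\pi_*\sO_Y(-aE)=\fm=\pi_*\sO_Y(-E)$; since the filtration is decreasing, this means it is constant on $[1,a]$, that is, $d(1)=\cdots=d(a-1)=0$. In the non-Gorenstein case $n\ge2$ the same mechanism, now comparing $\pi_*\sO_Y(-K_Y)$ with the reflexive sheaf $\sO_X(-K_X)$ through the relation $nK_Y\sim aE$, reduces keeping general elephants to the analogous vanishing of graded pieces.

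I would then run the case analysis. For ordinary type with $n=1$, Lemma~\ref{lem:dim}(\ref{itm:dim-ord}) gives $d(l)=1+\rd{l/r_1}+\rd{l/r_2}\ge1$ for $0\le l<a$, whence $d(1)\ne0$ and keeping general elephants fails once $a>1$; the classification shows these Gorenstein ordinary contractions occur only over c$A$ and c$D$ points, while c$A/n$ emerges from the non-Gorenstein reformulation above. For exceptional type with $n=1$, Lemma~\ref{lem:dim}(\ref{itm:dim-exc}) shows that once $a\ge2$ the pair $(\pi,d(1))$ is one of $(\textup{e1},1),(\textup{e2},1),(\textup{e3},1),(\textup{e5},0),(\textup{e9},0)$; hence e5 and e9 keep general elephants because their graded pieces vanish throughout $[1,a-1]$, whereas e1, e2, e3 fail because $d(1)=1$, and enumerating the discrepancies $a\ge2$ admitted by each type (using Lemma~\ref{lem:dim}(\ref{itm:dim-e1}) for $a=4$ on e1) leaves exactly $(\textup{e1},4)$, $(\textup{e1},2)$, $(\textup{e2},2)$, $(\textup{e3},3)$. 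Finally, the non-Gorenstein reformulation together with the classification shows that exceptional contractions with $n\ge2$ always keep general elephants.

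The main obstacle is the non-Gorenstein reformulation and its consequences. When $\sO_X(-K_X)$ is merely reflexive one must establish the precise dictionary between the equality $\pi_*\sO_Y(-K_Y)=\sO_X(-K_X)$ and the vanishing of graded dimensions, and then invoke the full geometric classification to separate the ordinary contractions over c$A/n$, which fail, from those over the remaining higher-index types, which keep general elephants. This last distinction is exactly the one invisible to the crude Gorenstein dimension count, and carrying it out carefully is where the weight of the proof lies.
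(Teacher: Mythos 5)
Your reduction of ``keeps general elephants'' to the vanishing of the graded dimensions $d(l)$ is correct in the Gorenstein case: since $-K_X$ is Cartier, the projection formula gives $\pi_*\sO_Y(-K_Y)=\sO_X(-K_X)\otimes\pi_*\sO_Y(-aE)$ and $\pi_*\sO_Y(-E)=\fm$, so the condition is indeed $d(1)=\cdots=d(a-1)=0$. Combined with Lemma~\ref{lem:dim} and Theorem~\ref{thm:notmin} this correctly disposes of the Gorenstein exceptional cases, in particular the positive assertion for $(\textup{e5},2)$ and $(\textup{e9},2)$, which are not on the exception list and for which $d(1)=0$ with $a-1=1$ suffices. (Note, though, that much of your Gorenstein analysis proves \emph{failure} in the listed cases, which the theorem, being of the form ``keeps unless\dots'', does not actually require.)

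The genuine gap is exactly where you locate it yourself: the non-Gorenstein cases, and these are precisely where the theorem's positive content lives. For ordinary type with $a/n>1$, Theorem~\ref{thm:notmin}(\ref{itm:notmin-ord}) allows $x\in X$ of type c$D/2$ (necessarily with $\pi$ of type o3), which is \emph{not} on the exception list, so you must prove that $\pi$ keeps general elephants there; similarly for exceptional types e1, e2, e11 over c$D/2$ and c$E/2$ points in Table~\ref{tbl:notmin}. When $n\ge2$ the sheaf $\sO_X(-K_X)$ is only reflexive, the projection formula no longer yields a clean identity, and the required statement cannot be extracted from the integral filtration $\pi_*\sO_Y(-lE)$ alone: one needs the dimension counts for the twisted pieces $\pi_*\sO_Y(-K_Y-lE)$ from the singular Riemann--Roch formula together with the detailed geometric classification. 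The paper does not rederive this; it cites \cite[theorem~4.3]{K05} for the ordinary c$D/2$ case and \cite[corollary~3.3.3]{K24} for the exceptional types. A sentence announcing that ``the classification shows'' these cases keep general elephants is an appeal to exactly the result being proved, so as written your argument establishes the theorem only for $n=1$.
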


\begin{proof}
One has only to discuss the case when $a/n>1$. The case when $\pi$ is of exceptional type is treated in \cite[corollary~3.3.3]{K24}. Suppose that $\pi$ is of ordinary type and that $a/n>1$. By Theorem~\ref{thm:notmin}(\ref{itm:notmin-ord}) below, $x\in X$ is of type c$A$, c$A/n$, c$D$ or c$D/2$. If it is of type c$D/2$, in which $\pi$ is of type o3, then $\pi$ keeps general elephants as shown in \cite[theorem~4.3]{K05}.
\end{proof}

When $x\in X$ is of type c$A$ or c$A/n$, we have an almost complete geometric classification of $\pi$. We also have a list which exhausts the case when $a/n$ is not $1/n$. See \cite[theorems~3.5.5 to~3.5.7]{K24} and the references therein.

\begin{theorem}\label{thm:clsA}
Suppose that $x\in X$ is of type c$A$ or c$A/n$.
\begin{enumerate}
\item\label{itm:clsA-ord}
If $\pi$ is of ordinary type, then there exists an analytic identification
\[
x\in X\simeq o\in(x_1x_2+f(x_3^n,x_4)=0)\subset\fD^4/\bZ_n(1,-1,b,0)
\]
such that $\pi$ is the weighted blow-up with $\wt(x_1,x_2,x_3,x_4)=\frac{1}{n}(r_1,r_2,a,n)$, where $n$ divides $a-br_1$, $an$ divides $r_1+r_2$ and $(a-br_1)/n$ is coprime to $r_1$. Further, $f$ is of weighted order $(r_1+r_2)/n$ with respect to $\wt(x_3,x_4)=(a/n,1)$ and the monomial $x_3^{(r_1+r_2)/a}$ appears in $f$.
\item
If $\pi$ is of exceptional type, then $n=1$ and the triple $(\pi,x,a)$ is $(\textup{e1},\textup{c$A_1$},4)$ or $(\textup{e3},\textup{c$A_2$},3)$.
\end{enumerate}
\end{theorem}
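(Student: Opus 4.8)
The plan is to read off both parts from the explicit geometric classification of divisorial contractions to c$A$ and c$A/n$ germs assembled in \cite[theorems~3.5.5 to~3.5.7]{K24}, and then to verify by hand that the arithmetic conditions recorded in~(i) are exactly the constraints carried by the weighted blow-up description. The structural input — that an ordinary contraction over such a germ is a weighted blow-up, and that an exceptional one is so restricted — comes from the author's earlier work \cite{K01}, \cite{K05}; the present task is the translation into coordinates.

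First I would treat the ordinary case~(i). Fixing the standard analytic normal form $x_1x_2+f(x_3^n,x_4)=0$ in $\fD^4/\bZ_n(1,-1,b,0)$ for a germ of type c$A$ or c$A/n$, the classification identifies $\pi$ with the weighted blow-up of weights $\frac1n(r_1,r_2,a,n)$. I would then check that the listed conditions are precisely what is needed for this blow-up to be a divisorial contraction of discrepancy $a/n$. The divisibility $n\mid a-br_1$ (together with $n\mid r_1+r_2$) is the integrality forcing the weight vector $\frac1n(r_1,r_2,a,n)$ to lie in the lattice $N=\bZ^4+\bZ\cdot\frac1n(1,-1,b,0)$, so that the added ray defines a toric divisor; the coprimality of $(a-br_1)/n$ to $r_1$ is the primitivity of that ray, equivalently the statement that $E$ is extracted as a single reduced prime divisor. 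Finally the divisibility $an\mid r_1+r_2$ together with the appearance of the monomial $x_3^{(r_1+r_2)/a}$ forces the weighted order of the defining equation to equal $\wt(x_1x_2)=(r_1+r_2)/n$, since a monomial $x_3^{ni}x_4^{j}$ has $\wt=ia+j$ and the stated monomial is the unique one of weight $(r_1+r_2)/n$ supported on $x_3$. Granting the last point, the toric adjunction computation $a_E(X)=\frac{r_1+r_2+a+n}{n}-\frac{r_1+r_2}{n}=1+\frac an$ recovers the discrepancy $a/n$, and relative ampleness of $-K_Y$ follows from the structure of the weighted blow-up.

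For the exceptional case~(ii) I would argue by elimination from Table~\ref{tbl:numerical}. The general elephant $S$ of $Y$ is Du Val by the general elephant theorem of \cite{K03}, \cite{K05}, and its crepant image $S_X\in\abs{-K_X}$ is a Du Val singularity of type $A$ because $x\in X$ is of type c$A$ or c$A/n$. I would combine this with Theorem~\ref{thm:keepge}, which pins down when $\pi$ keeps general elephants, and with the dimension data of Lemma~\ref{lem:dim}: for an exceptional contraction with $a\ge2$ the pair $(\pi,d(1))$ is already confined to $(\textup{e1},1)$, $(\textup{e2},1)$, $(\textup{e3},1)$, $(\textup{e5},0)$, $(\textup{e9},0)$, and only those compatible with a type-$A$ elephant over a c$A$ or c$A/n$ base survive. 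Running through e1 to e16, the sole survivors are $(\textup{e1},\textup{c$A_1$},4)$ and $(\textup{e3},\textup{c$A_2$},3)$; in each case $n=1$ because the non-Gorenstein possibilities are excluded by the same numerical constraints together with the keep-general-elephant dichotomy of Theorem~\ref{thm:keepge}.

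The main obstacle I anticipate is the bookkeeping in~(i): one must confirm that the stated conditions are not merely necessary but sufficient, that is, that every ordinary divisorial contraction over a c$A$ or c$A/n$ germ is realized by a weighted blow-up in these coordinates, the primitivity condition being the delicate combinatorial point. This is exactly where the full force of the classification in \cite{K01} and \cite{K05} enters, whereas the discrepancy and weighted-order computations above become routine once the coordinate normal form and the weight vector are granted.
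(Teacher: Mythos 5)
Your proposal is correct and follows essentially the same route as the paper: Theorem~\ref{thm:clsA} is stated there as a recollection of the known classification, with the entire content deferred to \cite[theorems~3.5.5 to~3.5.7]{K24} and the original papers \cite{K01}, \cite{K05}, exactly as you do. Your added checks (lattice integrality and primitivity of the weight vector, the weighted-order and discrepancy computation, and the elimination in the exceptional case) are consistent sanity verifications of the cited statement rather than a divergence from the paper's approach.
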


\begin{theorem}\label{thm:notmin}
Suppose that $a/n\neq1/n$.
\begin{enumerate}
\item\label{itm:notmin-ord}
If $\pi$ is of ordinary type, then $x\in X$ is of type $cA$, c$A/n$, c$D$ or c$D/2$. The case when $x\in X$ is of type c$D$ or c$D/2$ occurs only if $\pi$ is of type \textup{o3}.
\item
If $\pi$ is of exceptional type, then it belongs to one of the cases in Table~\textup{\ref{tbl:notmin}}.
\begin{table}[ht]\caption{Exceptional type with $a/n\neq1/n$}\label{tbl:notmin}
\begin{tabular}{@{}clc@{}}
\textup{type}&\multicolumn{1}{c}{$x$}&$a/n$\\
\hline
\textup{e1}  &\textup{c}$A_1$, \textup{c}$D$ &$4$\\
             &\textup{c}$D$                  &$2$\\
             &\textup{c}$D/2$                &$4/2$\\
             &\textup{c}$D/2$                &$2/2$
\end{tabular}
\qquad
\begin{tabular}{@{}clc@{}}
\textup{type}&\multicolumn{1}{c}{$x$}&$a/n$\\
\hline
\textup{e2}  &\textup{c}$D$, \textup{c}$E_6$ &$2$\\
             &\textup{c}$D/2$                &$2/2$\\
\textup{e3}  &\textup{c}$A_2$, \textup{c}$D$ &$3$\\
\multicolumn{3}{c}{}
\end{tabular}
\qquad
\begin{tabular}{@{}clc@{}}
\textup{type}&\multicolumn{1}{c}{$x$}&$a/n$\\
\hline
\textup{e5}  &\textup{c}$E_7$                &$2$\\
\textup{e9}  &\textup{c}$E_{7,8}$            &$2$\\
\textup{e11} &\textup{c}$E/2$                &$2/2$\\
\multicolumn{3}{c}{}
\end{tabular}
\end{table}
\end{enumerate}
\end{theorem}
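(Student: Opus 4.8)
My plan is to deduce both assertions from the explicit classification of threefold divisorial contractions compiled in \cite[chapter~3]{K24}, organising the extraction around the general elephant. By the general elephant theorem the strict transform $S\subset Y$ of a general $S_X\in\abs{-K_X}$ is Du Val, $S\to S_X$ is crepant, and $S_X$ is Du Val; its Dynkin type, together with the $\bZ_n$-action in the non-Gorenstein case, determines the type of $x\in X$. Thus fixing the type of $x\in X$ becomes the problem of fixing the Dynkin type of $S_X$, while the numerical type of $\pi$ in Table~\ref{tbl:numerical} and the discrepancy $a/n$ are controlled by how the curves $E\cap S$ sit inside the partial resolution $S\to S_X$.

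For part~(i) I would split on the index. When $n=1$ and $x\in X$ is singular, Lemma~\ref{lem:dim}(\ref{itm:dim-ord}) gives $a<r_1+r_2$, so o1 forces $a=1$ and only o2, o3 survive for $a\ge2$; the weighted blow-up analysis then shows that the elephant is of type $A_k$ when $\pi$ is o2 and of type $A_k$ or $D_k$ when $\pi$ is o3, whence $x\in X$ is c$A$ or c$D$ and c$D$ forces o3. The non-Gorenstein c$A/n$ contractions are made completely explicit by Theorem~\ref{thm:clsA}(\ref{itm:clsA-ord}), and c$D/2$ is recovered as the $\bZ_2$-descent of the Gorenstein c$D$ picture. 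The remaining types c$E$, c$E/2$, c$Ax/4$, c$Ax/2$, c$D/3$ must be excluded, which amounts to verifying that for each of them every ordinary divisorial contraction to the point attains only the minimal discrepancy $1/n$.

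For part~(ii), in the Gorenstein case Lemma~\ref{lem:dim}(\ref{itm:dim-exc}) at once restricts the exceptional type to e1, e2, e3, e5 or e9, and Lemma~\ref{lem:dim}(\ref{itm:dim-e1}) isolates the e1 rows with $a=4$ through the value of $d(2)$. The Dynkin type of $S_X$ then fills in the middle column: e5 forces an $E_7$ elephant, e9 an $E_7$ or $E_8$ elephant, e2 a $D_k$ or $E_6$ elephant, and e1, e3 an $A$- or $D$-type elephant; the two c$A$ entries $(\textup{e1},\textup{c}A_1,4)$ and $(\textup{e3},\textup{c}A_2,3)$ are precisely Theorem~\ref{thm:clsA}(ii). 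For the three non-Gorenstein rows---c$D/2$ under e1 and e2, and c$E/2$ under e11---I would pass to the index-one $\bZ_2$-cover, identify it with the corresponding Gorenstein c$D$ or c$E$ contraction, and check that both the contraction and its $\bZ_2$-action descend, so that the quotient discrepancies are the tabulated $4/2$ and $2/2$.

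The hard part throughout is everything touching a c$D$ or c$E$ elephant, together with its quotients. The numerical data of Lemma~\ref{lem:dim} and the c$A$ classification of Theorem~\ref{thm:clsA} settle only the $A$-type and the Gorenstein-exceptional bookkeeping; pinning down the $D$- and $E$-type base singularities, excluding the five types in part~(i), and performing the equivariant descent in part~(ii) all rest on the geometric weighted blow-up analysis of \cite{K05} carried out case by case in \cite[chapter~3]{K24}. I expect this case-by-case geometric input, rather than any single clean argument, to be the real content of the proof.
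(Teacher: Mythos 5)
The paper does not actually prove Theorem~\ref{thm:notmin}: it is imported verbatim from the author's earlier classification work, with the pointer ``see \cite[theorems~3.5.5 to~3.5.7]{K24} and the references therein'' (ultimately \cite{K01}, \cite{K02}, \cite{K03}, \cite{K05}). Measured against that, your proposal is essentially the same approach: you reconstruct, correctly, the organising principles of those references --- the general elephant theorem pinning down the Dynkin type of $S_X$, the singular Riemann--Roch data of Lemma~\ref{lem:dim} cutting the ordinary types down to o2, o3 and the exceptional types down to e1, e2, e3, e5, e9 in the Gorenstein case, Theorem~\ref{thm:clsA} handling c$A$/c$A/n$, and $\bZ_2$-equivariant descent for the index-two rows --- and you are candid that the exclusion of c$E$, c$E/2$, c$Ax/4$, c$Ax/2$, c$D/3$ and the $D$/$E$-elephant bookkeeping rest on the case-by-case weighted blow-up analysis of \cite{K05} and \cite[chapter~3]{K24}. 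Two small caveats: Lemma~\ref{lem:dim} is itself extracted from that same classification, so invoking it does not shorten the underlying work, only reorganises it; and in part~(i) your appeal to Theorem~\ref{thm:clsA}(\ref{itm:clsA-ord}) presupposes $x$ is already known to be of type c$A/n$, so it contributes nothing to the exclusion step, which is where the entire burden lies. Neither point is an error --- your outline contains no false step --- but, like the paper, it is in the end a citation of the prior classification rather than an independent proof.
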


\section{Termination of divisorial contractions}
The last section is devoted to the proof of Theorem~\ref{thm:sequence}. We keep the notation in the theorem. First of all, we may and shall assume the ground field $k$ to be the field $\bC$ of complex numbers by the same argument as for Theorem~\ref{thm:GKP}. We set $\pi_i\colon E_{i+1}\subset X_{i+1}\to x_i\in X_i$ and $\hat\fm_i=\fm_i\sO_{\hat X_i}$. Let $\sI_i$ denote the ideal sheaf in $\sO_{\hat X_i}$ defining $\hat C_i$. Let $n_i$ denote the index of the germ $x_i\in X_i$ and let $a_i/n_i$ denote the discrepancy $d_{E_{i+1}}(X_i)$ of $\pi_i$, that is, $K_{X_{i+1}}=\pi_i^*K_{X_i}+(a_i/n_i)E_{i+1}$. If $x_i\in X_i$ is not Gorenstein, then we let $e_i$ denote the axial multiplicity of it. We shall define two invariants $m_i$ and $l_i$.

\begin{definition}\label{dfn:mi}
We define $m_i$ to be the least integer such that $\ord_{E_j}\fm_i=m_i$ for infinitely many indices $j>i$. It is well-defined because of the boundedness $\ord_{E_j}\fm_i\le t_i^{-1}$ in terms of the log canonical threshold $t_i$ of $\hat\fm_i$ on $(\hat X_i,\hat\Delta_i)$, as $a_{\hat E_j}(\hat X_i,\hat\Delta_i)=1$.
\end{definition}

\begin{lemma}\label{lem:mi}
$m_{i+1}\le m_i$.
\end{lemma}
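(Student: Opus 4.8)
The plan is to compare the two maximal ideals $\fm_i$ and $\fm_{i+1}$ through the divisorial contraction $\pi_i\colon X_{i+1}\to X_i$ and then reduce the inequality to a pigeonhole argument. The starting point is the first bullet of Proposition~\ref{prp:sequence}, which places $x_{i+1}$ on the exceptional divisor $E_{i+1}$; since $\pi_i$ contracts $E_{i+1}$ to $x_i$, this gives $\pi_i(x_{i+1})=x_i$. Consequently every function vanishing at $x_i$ pulls back to a function vanishing at $x_{i+1}$, so as germs at $x_{i+1}$ one has the containment $\fm_i\sO_{X_{i+1}}\subseteq\fm_{i+1}$.

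Next I would pin down the centres. For $j>i+1$ the divisor $E_j$ is exceptional for $\pi_{j-1}$ with centre $x_{j-1}$ on $X_{j-1}$, and pushing this centre down the tower $X_{j-1}\to\cdots\to X_{i+1}$ --- using repeatedly that $x_{k+1}$ lies on $E_{k+1}$ and hence maps to $x_k$ --- shows $c_{X_{i+1}}(E_j)=x_{i+1}$. Thus the relevant valuations are all centred at $x_{i+1}$, exactly where the containment above is valid. Since passing to a smaller ideal can only raise its order along a fixed divisor, the containment reverses to
\[
\ord_{E_j}\fm_{i+1}\le\ord_{E_j}(\fm_i\sO_{X_{i+1}})=\ord_{E_j}\fm_i
\qquad\textrm{for all }j>i+1,
\]
where the equality is the functoriality of the order under the birational morphism $X_{i+1}\to X_i$.

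Finally I would invoke the definition of $m_i$. There are infinitely many indices $j>i$ with $\ord_{E_j}\fm_i=m_i$, and discarding the single index $j=i+1$ leaves infinitely many with $j>i+1$; for these the displayed inequality gives $\ord_{E_j}\fm_{i+1}\le m_i$. As $\ord_{E_j}\fm_{i+1}$ is a non-negative integer bounded above by $m_i$, it ranges over the finite set $\{0,\ldots,m_i\}$, so by the pigeonhole principle some single value in this set is attained for infinitely many such $j$. By the minimality built into the definition of $m_{i+1}$, this yields $m_{i+1}\le m_i$.

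The argument is essentially soft and I do not expect a genuine obstacle; the only point demanding care is the bookkeeping of the centres, namely verifying that every $E_j$ with $j>i+1$ is centred precisely at $x_{i+1}$ on $X_{i+1}$, since it is this localisation that makes the containment $\fm_i\sO_{X_{i+1}}\subseteq\fm_{i+1}$ available and thereby fixes the direction of the order inequality.
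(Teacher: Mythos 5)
Your proposal is correct and follows the same route as the paper, whose entire proof is the observation that $\fm_i\sO_{X_{i+1}}\subset\fm_{i+1}$; you have simply spelled out the bookkeeping of centres and the pigeonhole step that the paper leaves implicit.
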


\begin{proof}
This follows from the inclusion $\fm_i\sO_{X_{i+1}}\subset\fm_{i+1}$.
\end{proof}

The invariant $l_i$ was originally introduced by Mori \cite{Mo88} as the invariant $i_P(1)$ in the proof of the existence of threefold flips. Let $\Omega_{\hat X_i}'$ denote the sheaf of \textit{special differentials} on $\hat X_i$ introduced in \cite{dFEM11}, by which the canonical divisor $K_{\hat X_i}$ is defined in such a manner that $\sO_{\hat X_i}(K_{\hat X_i})$ coincides with $\bigwedge^3\Omega_{\hat X_i}'$ on the regular locus in $\hat X_i$. Let $\Omega_{\hat C_i}'$ denote that on $\hat C_i$. Then the natural map $\sI_i/\sI_i^{(2)}\times\sI_i/\sI_i^{(2)}\times\Omega_{\hat C_i}'\to\bigwedge^3\Omega_{\hat X_i}'\otimes\sO_{\hat C_i}$ which sends $(x,y,zdu)$ to $zdx\wedge dy\wedge du$, where $\sI_i^{(2)}$ denotes the second symbolic power of $\sI_i$, induces a map
\[
\alpha_i\colon\bigwedge^2\sI_i/\sI_i^{(2)}\to\sHom_{\hat C_i}(\Omega_{\hat C_i}',\gr\omega_i')
\]
of invertible $\sO_{\hat C_i}$-modules, where $\gr\omega_i'$ denotes the quotient of $\sO_{\hat X_i}(K_{\hat X_i})\otimes\sO_{\hat C_i}$ by the maximal torsion submodule.

\begin{definition}
We define $l_i$ to be the length of the cokernel of $\alpha_i$.
\end{definition}

The invariant $l_i$ will be used only if $x_i\in X_i$ is Gorenstein, in which the target of $\alpha_i$ is isomorphic to $\sO_{\hat X_i}(K_{\hat X_i})\otimes\sO_{\hat C_i}(-K_{\hat C_i})$. The germ $\hat x_i\in\hat X_i$ is embedded into the space $\hat A=\Spec k[[z_1,z_2,z_3,z_4]]$ so that $\hat C_i$ is the $z_4$-axis and so that $X_i$ is given by a formal power series of form $z_1z_4^{l_i}+f(z_1,z_2,z_3,z_4)$ with $f\in(z_1,z_2,z_3)^2\sO_{\hat A}$ \cite[lemma~2.16]{Mo88}.

\begin{lemma}\label{lem:li}
Consider the composite $\pi_{ij}\colon x_j\in X_j\to x_i\in X_i$ where $i<j$ such that both $x_i\in X_i$ and $x_j\in X_j$ are Gorenstein. Suppose the existence of generators $z_1,z_2,z_3,z_4$ of $\hat\fm_i$ such that $z_1,z_2,z_3$ generate $\sI_i$ and such that the orders $w_k=\ord_{\hat E_j}z_k$ satisfy $w_1\ge w_2\ge w_3$. If $d_{E_j}(X_i)<w_2+w_3$, then $m_j<m_i$ or $l_j<l_i$.
\end{lemma}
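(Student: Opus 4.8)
The plan is to read off both invariants from Mori's normal form at the two Gorenstein points and to track how they transform under the weighted blow-up attached to $E_j$. By Gorensteinness and \cite[lemma~2.16]{Mo88} I embed $\hat x_i\in\hat X_i\hookrightarrow\hat A=\Spec k[[z_1,z_2,z_3,z_4]]$ so that $\hat C_i$ is the $z_4$-axis, $\sI_i=(z_1,z_2,z_3)$ and $\hat X_i=(g_i=0)$ with $g_i=z_1z_4^{l_i}+f$, $f\in(z_1,z_2,z_3)^2\sO_{\hat A}$ and $l_i\ge1$; the $z_k$ are the generators supplied by the hypothesis, with $w_k=\ord_{E_j}z_k$, $w_1\ge w_2\ge w_3$, and I write $w_4=\ord_{E_j}z_4$. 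First I would realise $E_j$ as the exceptional divisor of the weighted blow-up of $\hat A$ with $\wt(z_1,z_2,z_3,z_4)=(w_1,w_2,w_3,w_4)$ restricted to $\hat X_i$, so that $a_{E_j}(\hat A)=w_1+w_2+w_3+w_4$ and, by adjunction for the strict transform, $d_{E_j}(X_i)=w_1+w_2+w_3+w_4-\ord_w(g_i)-1$ with the weighted order $\ord_w(g_i)=\min(w_1+l_iw_4,\ord_w f)$. The hypothesis $d_{E_j}(X_i)<w_2+w_3$ then reads exactly $\ord_w(g_i)\ge w_1+w_4$, whence (as $l_i\ge1$) also $\ord_w f\ge w_1+w_4$.

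The point $\hat x_j=\hat C_j\cap\hat E_j$ sits at the origin of the $z_4$-chart $z_k=u_ku_4^{w_k}$ $(k\le3)$, $z_4=u_4^{w_4}$, in which $\hat E_j=(u_4=0)$ and $\hat C_j$ is the $u_4$-axis, so that $\fm_i\sO_{\hat X_j}=(u_1u_4^{w_1},u_2u_4^{w_2},u_3u_4^{w_3},u_4^{w_4})$. The argument then splits on $w_4$. If $w_4\ge2$ I claim $m_j<m_i$, and this needs only the coordinate change, not the discrepancy hypothesis. Indeed, since each value below $m_i$ occurs for only finitely many indices, there are infinitely many $j'>j$ with $\ord_{E_{j'}}\fm_i=m_i$; fix such a $j'$, large enough that also $\ord_{E_{j'}}\fm_j\ge m_j$. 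Writing $b_k=\ord_{E_{j'}}u_k$, each $b_k\ge\ord_{E_{j'}}\fm_j$, so every generator above has order at least $2\ord_{E_{j'}}\fm_j$ (for $u_ku_4^{w_k}$ use $b_k+w_kb_4\ge2\ord_{E_{j'}}\fm_j$, and for $u_4^{w_4}$ use $w_4b_4\ge2\ord_{E_{j'}}\fm_j$). Hence $m_i=\ord_{E_{j'}}\fm_i\ge2\ord_{E_{j'}}\fm_j\ge2m_j$, giving $m_j\le m_i/2<m_i$.

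If $w_4=1$ I instead compute $l_j$ directly from the strict transform $g_j=u_1u_4^{q}+\tilde f$, where $q=w_1+l_i-\ord_w(g_i)\ge0$ and $\tilde f=u_4^{-\ord_w(g_i)}f(u_1u_4^{w_1},u_2u_4^{w_2},u_3u_4^{w_3},u_4)$. Since every monomial of $f$ carries at least two of $z_1,z_2,z_3$, one has $\tilde f\in(u_1,u_2,u_3)^2\sO$, so $g_j$ is already in Mori's normal form; consequently $\partial g_j/\partial u_1$ restricts to $u_4^{q}$ along $\hat C_j$, and $l_j$, being that order, equals $q$. The inequality $\ord_w(g_i)\ge w_1+1$ from the first paragraph now yields $l_j=w_1+l_i-\ord_w(g_i)\le l_i-1<l_i$, the extreme case $q=0$ being the one where $g_j=u_1+\tilde f$ and $\hat X_j$ is smooth at $\hat x_j$. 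This closes the dichotomy: $w_4\ge2$ forces $m_j<m_i$ and $w_4=1$ forces $l_j<l_i$.

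The step I expect to be the main obstacle is the identification opening the second paragraph: that the composite divisorial contraction $\pi_{ij}$ is, near $\hat x_j$, governed by the single weighted blow-up with weights $(w_1,w_2,w_3,w_4)$ — equivalently that $\ord_{E_j}$ is the monomial valuation in these coordinates and that $\hat x_j$ is the origin of its $z_4$-chart. Everything else is a short computation, but both the $m_j$- and the $l_j$-arguments rest on this coordinate description. Licensing it is where I would invoke the classification of threefold divisorial contractions, in particular Theorems~\ref{thm:clsA} and~\ref{thm:notmin}, together with the equality $\hat x_i=\hat C_i\cap\hat E_i$ from Proposition~\ref{prp:sequence} that pins $\hat x_j$ on the strict transform of $\hat C_i$.
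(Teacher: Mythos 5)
Your argument stands or falls on the identification you yourself flag as the main obstacle, and that identification is a genuine gap: there is no reason why $\ord_{\hat E_j}$ should be the monomial valuation attached to the given generators $z_1,z_2,z_3,z_4$ with weights $(w_1,w_2,w_3,w_4)$, nor why $\pi_{ij}$ near $\hat x_j$ should be the restriction of the corresponding weighted blow-up of the ambient $\hat A$. The map $\pi_{ij}$ is a composite of $j-i$ divisorial contractions whose intermediate points $x_l$ ($i<l<j$) may be non-Gorenstein of arbitrary type; Theorems~\ref{thm:clsA} and~\ref{thm:notmin} classify \emph{single} contractions, give weighted-blow-up descriptions only for ordinary type over c$A$/c$A/n$ points, and even then only in special analytic coordinates adapted to the singularity --- not in the arbitrary generators supplied by the hypothesis of Lemma~\ref{lem:li}, which are constrained only by the ordering $w_1\ge w_2\ge w_3$. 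Exceptional-type contractions are not weighted blow-ups at all, and a composite of weighted blow-ups is in general not monomial in any fixed coordinate system on the bottom germ. Since both branches of your dichotomy rest on this chart description --- the $w_4\ge2$ branch reads off $\fm_i\sO_{\hat X_j}=(u_1u_4^{w_1},u_2u_4^{w_2},u_3u_4^{w_3},u_4^{w_4})$ from it, and the $w_4=1$ branch computes $g_j$, hence $l_j$, as a strict transform under it --- the proof does not go through. The adjunction formula $d_{E_j}(X_i)=w_1+w_2+w_3+w_4-\ord_w(g_i)-1$, which you use to convert the hypothesis into $\ord_w(g_i)\ge w_1+w_4$, is likewise unavailable without this setup.

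The paper avoids the issue entirely: it never produces coordinates in which $\ord_{\hat E_j}$ is monomial. Instead it splits on whether $\fm_i\sO_{X_j}\subset\fm_j^2$ (which gives $m_j<m_i$ at once), and otherwise uses that a general $z_4\in\fm_i$ cuts out a prime divisor at the factorial point $x_j$ (Lemma~\ref{lem:class}), which forces that divisor to be $E_j$; then it compares $\alpha_i$ and $\alpha_j$ through a commutative square whose vertical maps come from $\sI_i\sO_{\hat X_j}\subset\sI_j$ and $\pi_{ij}^*\sO_{X_i}(K_{X_i})\subset\sO_{X_j}(K_{X_j})$, and the length bookkeeping $b+l_j=l_i+g$ with $b\ge(w_k+w_l)(\hat E_j\cdot\hat C_j)$ and $g=d_{E_j}(X_i)(\hat E_j\cdot\hat C_j)$ yields $l_j<l_i$ directly from the hypothesis $d_{E_j}(X_i)<w_2+w_3$. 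Only the single inequality $\ord_{\hat E_j}(z_k\wedge z_l\ \text{generator})\ge w_k+w_l$ is needed from the coordinates, and that holds for \emph{any} valuation, monomial or not. Your computation of $l_j$ from Mori's normal form (that the exponent $q$ in $u_1u_4^q+\tilde f$ with $\tilde f\in(u_1,u_2,u_3)^2$ equals the invariant) is correct as far as it goes, but it cannot be reached without first legitimising the chart, and no result in the paper legitimises it.
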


\begin{proof}
We may take $z_4$ generally from $\fm_i$. If $\fm_i\sO_{X_j}$ is contained in $\fm_j^2$, then $m_j<2m_j\le m_i$. We shall assume the contrary $\fm_i\sO_{X_j}\not\subset\fm_j^2$ and prove that $l_j<l_i$. On this assumption, $z_4$ defines locally at $x_j$ a prime divisor. Recall that $X_j$ is factorial at $x_j$ by Lemma~\ref{lem:class}. It follows that $E_j$ is the only $\pi_{ij}$-exceptional prime divisor through $x_j$ and it is given by $z_4$.

Take the commutative diagram
\[
\begin{tikzcd}[baseline=(base.base)]
\bigwedge^2\sI_j/\sI_j^{(2)}\arrow[r,"\alpha_j"]&
\sO_{\hat X_j}(K_{\hat X_j})\otimes\sO_{\hat C_j}(-K_{\hat C_j})\\
\bigwedge^2\sI_i/\sI_i^{(2)}\arrow[r,"\alpha_i"]\arrow[u,"\beta"]&
|[alias=base]|\sO_{\hat X_i}(K_{\hat X_i})\otimes\sO_{\hat C_i}(-K_{\hat C_i})\arrow[u,"\gamma"]
\end{tikzcd}
\]
via $\hat C_j\simeq\hat C_i$, where the vertical maps $\beta$ and $\gamma$ come from the inclusions $\sI_i\sO_{\hat X_j}\subset\sI_j$ and $\pi_{ij}^*\sO_{X_i}(K_{X_i})\subset\sO_{X_j}(K_{X_j})$ respectively. Let $b$ and $g$ be the lengths of the cokernels of $\beta$ and $\gamma$ respectively. Then $b+l_j=l_i+g$. It suffices to show that $g<b$.

We write $a=d_{E_j}(X_i)$ for brevity. The invertible sheaf $\bigwedge^2\sI_i/\sI_i^{(2)}$ is generated by some $z_k\wedge z_l$ with $1\le k<l\le 3$. By assumption, $a<w_k+w_l$. The map $\beta$ sends $z_k\wedge z_l$ to $z_4^{w_k+w_l}z_k'\wedge z_l'$ for the functions $z_k'=z_kz_4^{-w_k}$ and $z_l'=z_lz_4^{-w_l}$ at $\hat x_j$. Then the image of $\beta$ is contained in $z_4^{w_k+w_l}\bigwedge^2\sI_j/\sI_j^{(2)}$ and thus the length $b$ of cokernel is bounded from below by the length $(w_k+w_l)(\hat E_j\cdot\hat C_j)$ of $\sO_{\hat C_j}/z_4^{w_k+w_l}\sO_{\hat C_j}$, where one should recall that $z_4$ defines $E_j$ at $x_j$. On the other hand, it follows from the local equality $\pi_{ij}^*K_{X_i}=K_{X_j}-aE_j$ at $x_j$ that $\gamma$ has cokernel isomorphic to $\sO_{\hat X_j}/\sO_{\hat X_j}(-a\hat E_j)\otimes\sO_{\hat C_j}$ of length $g=a(\hat E_j\cdot\hat C_j)$. Hence $b-g\ge(w_k+w_l-a)(\hat E_j\cdot\hat C_j)>0$.
\end{proof}

We shall investigate the divisorial contraction $\pi_i\colon E_{i+1}\subset X_{i+1}\to x_i\in X_i$. Firstly we treat the case when $x_i\in X_i$ is not Gorenstein.

\begin{lemma}\label{lem:keepge}
Consider the composite $\pi_{ij}\colon x_j\in X_j\to x_i\in X_i$ where $i<j$ such that
\begin{itemize}
\item
$\pi_l$ keeps general elephants for all $i\le l<j$ and
\item
$x_l\in X_l$ is not Gorenstein for all $i<l<j$.
\end{itemize}
Let $S_i$ be the general elephant of $x_i\in X_i$ and let $T_l$ be the strict transform in $X_l$ of $S_i$. Then $(X_l,T_l)$ is crepant to $(X_i,S_i)$ for all $i\le l\le j$.
\end{lemma}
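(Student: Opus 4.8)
The plan is to induct on $l$ from $i$ to $j$ and exploit the transitivity of the crepant relation, so that it is enough to check, for each $i\le l<j$, that $(X_{l+1},T_{l+1})$ is crepant to $(X_l,T_l)$; the base case $l=i$ is immediate because $T_i=S_i$. The single-step statement I would isolate is: if $\pi_l$ keeps general elephants and $T_l$ is the general elephant of the germ $x_l\in X_l$, then the strict transform $T_{l+1}$ is the general elephant of $X_{l+1}$ about $E_{l+1}$, and $(X_{l+1},T_{l+1})$ is crepant to $(X_l,T_l)$.

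To verify the crepancy in one step I would write $\pi_l^*T_l=T_{l+1}+cE_{l+1}$ with $c=\ord_{E_{l+1}}T_l$ and recall $K_{X_{l+1}}=\pi_l^*K_{X_l}+d_{E_{l+1}}(X_l)E_{l+1}$. Keeping general elephants makes $T_{l+1}$ a member of $\abs{-K_{X_{l+1}}}$, so $T_{l+1}\sim-K_{X_{l+1}}$ while $T_l\sim-K_{X_l}$; feeding these into the two relations gives $(c-d_{E_{l+1}}(X_l))E_{l+1}\sim0$ over $X_l$. Intersecting with a $\pi_l$-contracted curve in $E_{l+1}$, on which $E_{l+1}$ has negative degree, forces $c=d_{E_{l+1}}(X_l)$, hence $K_{X_{l+1}}+T_{l+1}=\pi_l^*(K_{X_l}+T_l)$ and the two pairs are crepant. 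Alternatively one may restrict this equality to the Du Val surface $T_{l+1}$ and invoke the crepancy of $T_{l+1}\to T_l$ furnished by the general elephant theorem.

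The hard part will be keeping the induction running: to apply that $\pi_{l+1}$ keeps general elephants I need $T_{l+1}$ to be the general elephant \emph{of the germ} $x_{l+1}\in X_{l+1}$, whereas the single-step statement only produces the general elephant \emph{about} $E_{l+1}$. This is precisely where I would use that $x_{l+1}\in X_{l+1}$ is not Gorenstein for $i<l+1<j$: at a non-Gorenstein terminal point $-K_{X_{l+1}}$ fails to be Cartier, so every member of $\abs{-K_{X_{l+1}}}$ passes through $x_{l+1}$, and therefore the general elephant about $E_{l+1}$ is nothing but the general member of $\abs{-K_{X_{l+1}}}$, which is the general elephant of the germ $x_{l+1}\in X_{l+1}$. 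For $l=i$ the divisor $T_i=S_i$ is the general elephant of $x_i\in X_i$ by definition, so no assumption on $x_i$ is required.

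Assembling the pieces, at every $l$ with $i\le l<j$ the divisor $T_l$ is the general elephant of the germ $x_l\in X_l$ (by definition for $l=i$, and because $x_l$ is non-Gorenstein for $i<l<j$), the contraction $\pi_l$ keeps general elephants, and the single-step statement simultaneously identifies $T_{l+1}$ and yields the crepancy of $(X_{l+1},T_{l+1})$ over $(X_l,T_l)$. Transitivity then propagates the crepant relation from $X_i$ to each $X_l$ with $i\le l\le j$, and no hypothesis on the endpoint $x_j$ is needed.
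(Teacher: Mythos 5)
There is a genuine gap at the heart of your induction: you assert that for $i<l<j$ the strict transform $T_l$ \emph{is} the general elephant of the germ $x_l\in X_l$, and this is not justified. The non-Gorenstein hypothesis only tells you that every member of $\abs{-K_{X_l}}$ passes through $x_l$, hence that the general elephant about $E_{l+1}$ coincides with the general elephant of the germ; it says nothing about the particular member $T_l$, which is the strict transform of the fixed divisor $S_i$ and may perfectly well be a \emph{special} member of $\abs{-K_{X_l}}$ (more singular, or with larger order along $E_{l+1}$ than the general member). Consequently your one-step argument is circular at the key point: the claim that ``keeping general elephants makes $T_{l+1}$ a member of $\abs{-K_{X_{l+1}}}$'' is equivalent to the equality $c=\ord_{E_{l+1}}T_l=d_{E_{l+1}}(X_l)$ you are trying to prove (since no nonzero multiple of $E_{l+1}$ is linearly trivial over $X_l$, as $-E_{l+1}$ is $\pi_l$-ample), and the keeps-general-elephants property only yields this equality for the \emph{general} member $S_l$ of $\abs{-K_{X_l}}$ through $x_l$.

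The paper closes exactly this gap with a two-sided inequality that your proposal lacks. Comparing $T_l$ with the general member $S_l$, for which $K_{X_{l+1}}+S_l'=\pi_l^*(K_{X_l}+S_l)$ holds, gives $\ord_{E_{l+1}}T_l\ge\ord_{E_{l+1}}S_l=d_{E_{l+1}}(X_l)$ and hence $K_{X_{l+1}}+T_{l+1}\le\pi_l^*(K_{X_l}+T_l)$. The reverse inequality comes from the canonicity of $(X_l,T_l)$ at $x_l$: by inversion of adjunction $(X_i,S_i)$ is plt, hence canonical as $K_{X_i}+S_i\sim0$, and this canonicity is carried along by the inductive crepancy, forcing $a_{E_{l+1}}(X_l,T_l)\ge1$, i.e.\ $\ord_{E_{l+1}}T_l\le d_{E_{l+1}}(X_l)$. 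Your proposal uses neither inversion of adjunction nor the canonicity of the pair, and without them the possibility $\ord_{E_{l+1}}T_l>d_{E_{l+1}}(X_l)$ (which would make $K_{X_{l+1}}+T_{l+1}$ strictly smaller than the pullback) is not excluded.
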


\begin{proof}
By inversion of adjunction \cite[theorem~17.6]{Ko+92}, the pair $(X_i,S_i)$ is at $x_i$ plt and thus canonical as $K_{X_i}+S_i\sim0$. We shall prove the assertion by induction on $l$. Since $\pi_i$ keeps general elephants, $(X_{i+1},T_{i+1})$ is crepant to $(X_i,S_i)$. Fix $i<l<j$ and assume that $(X_l,T_l)$ is crepant to $(X_i,S_i)$. Then $T_l$ is a member of $\abs{-K_{X_l}}$ and passes through the non-Gorenstein point $x_l$. Since $\pi_l$ keeps general elephants, the general elephant $S_l$ of $x_l\in X_l$ satisfies $K_{X_{l+1}}+S_l'=\pi_l^*(K_{X_l}+S_l)$ for the strict transform $S_l'$. Hence $T_l\in\abs{-K_{X_l}}$ satisfies $K_{X_{l+1}}+T_{l+1}\le\pi_l^*(K_{X_l}+T_l)$. This is in fact the equality $K_{X_{l+1}}+T_{l+1}=\pi_l^*(K_{X_l}+T_l)$ because $(X_l,T_l)$ is canonical at $x_l$.
\end{proof}

\begin{lemma}\label{lem:nG}
Suppose that $x_i\in X_i$ is not Gorenstein.
\begin{enumerate}
\item\label{itm:nG-A}
If $x_i\in X_i$ is of type c$A/n_i$, then $x_{i+1}\in X_{i+1}$ is either a quotient singularity, a Gorenstein point or a point of the same type c$A/n_i$ as $x_i\in X_i$ is, and $e_j<e_i$ in the last case.
\item\label{itm:nG-notA}
If $x_i\in X_i$ is not of type c$A/n_i$, then $\pi_i$ keeps general elephants.
\end{enumerate}
\end{lemma}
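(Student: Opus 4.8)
The plan is to treat the two items separately, deriving item~(\ref{itm:nG-notA}) directly from the classification of general elephants and reserving the explicit weighted blow-up for item~(\ref{itm:nG-A}).

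For item~(\ref{itm:nG-notA}) I would simply invoke Theorem~\ref{thm:keepge}. Since $x_i\in X_i$ is not Gorenstein, its index $n_i$ is at least two, so the exceptional-type exception, which requires $n=1$, cannot occur. Among the ordinary-type exceptions the listed singularity types are c$A$, c$D$ and c$A/n$, of which only c$A/n$ is non-Gorenstein. Hence if $x_i\in X_i$ is not of type c$A/n_i$, then it is of type c$Ax/4$, c$Ax/2$, c$D/3$, c$D/2$ or c$E/2$, none of the exceptions in Theorem~\ref{thm:keepge} applies, and $\pi_i$ keeps general elephants.

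For item~(\ref{itm:nG-A}) I would first pin down the shape of $\pi_i$. Because $x_i\in X_i$ is of type c$A/n_i$ with $n_i\ge2$, the second part of Theorem~\ref{thm:clsA} forbids $\pi_i$ from being of exceptional type, as that would force $n_i=1$; thus $\pi_i$ is of ordinary type and Theorem~\ref{thm:clsA}(\ref{itm:clsA-ord}) supplies an analytic identification $x_i\in X_i\simeq o\in(x_1x_2+f(x_3^{n_i},x_4)=0)\subset\fD^4/\bZ_{n_i}(1,-1,b,0)$ together with the weighted blow-up $\wt(x_1,x_2,x_3,x_4)=\frac{1}{n_i}(r_1,r_2,a,n_i)$ realising $\pi_i$. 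I would then examine $X_{i+1}$ on the exceptional divisor $E_{i+1}$ chart by chart. In the charts $(x_1\neq0)$ and $(x_2\neq0)$ the strict transform is cut out by an equation in which $x_2$ respectively $x_1$ appears linearly, so the hypersurface is smooth there and any singular point of $X_{i+1}$ is inherited from the ambient cyclic quotient; in the charts $(x_3\neq0)$ and $(x_4\neq0)$ the equation retains the form $u\,x_1'x_2'+g$ with $u$ a unit, so each singularity of $X_{i+1}$ is again of type c$A$ or c$A/m$ with $m\mid n_i$. Matching this with the geometric classification in \cite[theorems~3.5.5 to~3.5.7]{K24} leaves for the point $x_{i+1}=\hat C_{i+1}\cap\hat E_{i+1}$ exactly the three possibilities in the statement: a cyclic quotient singularity, a Gorenstein point, or a singularity of type c$A/n_i$.

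It remains to prove the strict inequality $e_{i+1}<e_i$ when $x_{i+1}$ is again of type c$A/n_i$, and this is the step I expect to be the main obstacle. The cleanest route uses that, a c$A/n_i$ point not being of type c$Ax/4$, its axial multiplicity equals the number of index-$n_i$ fictitious singularities in its basket, a quantity also governed by the order in $x_4$ of $f$ restricted to the non-free axis in the normal form above. Under the weighted blow-up the pure $x_4$-monomials of $f$ are divided by a positive power of the equation of $E_{i+1}$, so the order along the new non-free axis drops by the amount absorbed by $E_{i+1}$; equivalently, the basket of $x_i$ is redistributed among the singularities of $X_{i+1}$ lying on $E_{i+1}$, at least one index-$n_i$ point being split off from $x_{i+1}$. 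Either way $e_{i+1}<e_i$. The delicate part is the bookkeeping of the cyclic quotient structure $\bZ_{n_i}(1,-1,b,0)$ against the weights $\frac{1}{n_i}(r_1,r_2,a,n_i)$ in the chart containing $x_{i+1}$, together with verifying that no intermediate type c$A/m$ with $1<m<n_i$ survives there; for these explicit computations I would lean on the normal forms recorded in \cite[chapter~3]{K24}.
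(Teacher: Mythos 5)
Your proposal is correct and follows essentially the same route as the paper: item~(ii) is read off from Theorem~\ref{thm:keepge} exactly as you describe, and item~(i) is the chart-by-chart analysis of the weighted blow-up in Theorem~\ref{thm:clsA}(\ref{itm:clsA-ord}), where the paper cites the computation in \cite[pp.~111--112]{K05} showing that $o_1,o_2$ are quotient singularities, the $x_3x_4$-line carries only Gorenstein points, and $o_4$ is again of type c$A/n$ given by $x_1x_2+x_4^{-(r_1+r_2)/n}f(x_3^nx_4^a,x_4)$. The step you flag as delicate is settled by your first heuristic: the axial multiplicity equals the order of $f(0,x_4)$, and at $o_4$ this drops by exactly $(r_1+r_2)/n\ge1$, giving $e_{i+1}<e_i$.
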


\begin{proof}
The first assertion is observed from Theorem~\ref{thm:clsA}. In the setting of Theorem~\ref{thm:clsA}(\ref{itm:clsA-ord}), the axial multiplicity of $x\in X$ equals the order of $f(0,x_4)$. As computed on each chart $o_i\in(x_i\neq0)$ in \cite[pp.111--112]{K05}, $Y$ has quotient singularities of types $\frac{1}{r_1}(-1,(a-br_1)/n,1)$ and $\frac{1}{r_2}(-1,(a+br_2)/n,1)$ at $o_1$ and $o_2$ respectively, a singularity of type c$A/n$ at $o_4$ provided that $o_4\in Y$, and possibly Gorenstein singularities in the $x_3x_4$-line. The singularity $o_4\in Y$ is given by $x_1x_2+x_4^{-(r_1+r_2)/n}f(x_3^nx_4^a,x_4)$ in $\fD^4/\bZ_n(1,-1,b,0)$, where the axial multiplicity decreases by $(r_1+r_2)/n$.

The second assertion follows from Theorem~\ref{thm:keepge}.
\end{proof}

\begin{corollary}\label{crl:nG}
For infinitely many indices $i$, $x_i\in X_i$ is either a quotient singularity or Gorenstein.
\end{corollary}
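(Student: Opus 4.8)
The plan is to argue by contradiction. Suppose there is an index $i_0$ such that for every $i\ge i_0$ the germ $x_i\in X_i$ is \emph{neither} Gorenstein \emph{nor} a quotient singularity. I would then split the tail $i\ge i_0$ according to Lemma~\ref{lem:nG} and derive a contradiction in each of the two resulting regimes.

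First I would dispose of the possibility that some $x_{i_1}\in X_{i_1}$ with $i_1\ge i_0$ is of type c$A/n_{i_1}$. By Lemma~\ref{lem:nG}(\ref{itm:nG-A}) the successor $x_{i_1+1}\in X_{i_1+1}$ is a quotient singularity, a Gorenstein point, or again a point of type c$A/n_{i_1}$ with strictly smaller axial multiplicity. The first two alternatives are forbidden by the contradiction hypothesis, so $x_{i_1+1}\in X_{i_1+1}$ is of type c$A/n_{i_1}$ with $e_{i_1+1}<e_{i_1}$. Iterating, every $x_i\in X_i$ with $i\ge i_1$ is of type c$A/n_{i_1}$ and the axial multiplicities $e_i$ strictly decrease, which is impossible for positive integers. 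Hence no $x_i\in X_i$ with $i\ge i_0$ is of type c$A/n_i$.

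In the remaining regime, Lemma~\ref{lem:nG}(\ref{itm:nG-notA}) shows that $\pi_i$ keeps general elephants for every $i\ge i_0$, while by hypothesis each such $x_i\in X_i$ is non-Gorenstein. Applying Lemma~\ref{lem:keepge} with $i=i_0$ and arbitrary $j$, I obtain that the strict transform $T_j$ in $X_j$ of the general elephant $S_{i_0}$ of $x_{i_0}\in X_{i_0}$ forms a pair $(X_j,T_j)$ crepant to $(X_{i_0},S_{i_0})$. Since $x_{i_0}\in X_{i_0}$ is a non-Gorenstein terminal point, $S_{i_0}$ is a genuine Du Val singularity, and by adjunction the induced birational morphisms $T_j\to S_{i_0}$ of normal surfaces are crepant. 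Each $\pi_l$ with $l\ge i_0$ restricts to a morphism $T_{l+1}\to T_l$ contracting the curve $C_{l+1}=E_{l+1}\cap T_{l+1}$ to the point $x_l$; this intersection has dimension one because $T_{l+1}$ is the strict transform of a member of $\abs{-K_{X_l}}$ through the non-Gorenstein point $x_l$ and therefore meets, but does not contain, the fibre $E_{l+1}$ over $x_l$. Thus each step extracts a prime divisor $C_{l+1}$ of discrepancy zero over $S_{i_0}$, and the $C_{l+1}$ are pairwise distinct: for $m>l$ the divisor $C_{l+1}$ already appears on $T_m$ whereas $C_{m+1}$ has centre the point $x_m$ on $T_m$. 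This yields infinitely many distinct discrepancy-zero divisors over the Du Val germ $S_{i_0}$, contradicting the fact that such divisors all lie on its minimal resolution and are therefore finite in number.

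Both regimes being impossible, the assumption fails and $x_i\in X_i$ is Gorenstein or a quotient singularity for infinitely many $i$. The step I expect to be the main obstacle is the surface reduction in the second regime, namely verifying that the elephant restrictions $T_{l+1}\to T_l$ are crepant and genuinely extract a new curve at each step, so that the finiteness of crepant divisors over a Du Val singularity can be invoked; the axial-multiplicity descent of the first regime is routine once Lemma~\ref{lem:nG}(\ref{itm:nG-A}) is available.
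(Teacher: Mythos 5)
Your proof is correct, and its second half takes a genuinely different route from the paper's. The treatment of the c$A/n$ regime via strictly decreasing axial multiplicity is the same as the paper's (the paper merely runs the two regimes in the opposite order, arguing directly that one reaches a quotient or Gorenstein point rather than by contradiction). The real divergence is in the non-c$A/n$ regime. The paper also invokes Lemma~\ref{lem:keepge} to obtain the crepant elephants $T_k$, but then stays on the threefolds: it takes the stable value $s_k\in12^{-1}\bZ$ of $\ord_{E_{k'}}T_k$ for infinitely many $k'>k$ and deduces $s_{k+1}\le s_k-1/12$ from $\pi_k^*T_k=T_{k+1}+(a_k/n_k)E_{k+1}$, so that the chain of non-c$A/n$ non-Gorenstein points has length at most $12s_i$. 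You instead restrict to the surfaces and appeal to the finiteness of discrepancy-zero divisors over the Du Val germ $S_{i_0}$. The step you flagged as the main obstacle does go through: a terminal threefold has only isolated singularities, so the different in adjunction vanishes in codimension one and $K_{T_{l+1}}=(K_{X_{l+1}}+T_{l+1})|_{T_{l+1}}$, whence the crepancy of $(X_{l+1},T_{l+1})$ over $(X_l,T_l)$ restricts to crepancy of $T_{l+1}\to T_l$; the intersection $E_{l+1}\cap T_{l+1}$ is nonempty (a member of $\abs{-K_{X_l}}$ must pass through the non-Gorenstein, hence non-Cartier, point $x_l$) and of pure dimension one since $T_{l+1}$ is irreducible and not contained in the $\bQ$-Cartier divisor $E_{l+1}$; and its components are discrepancy-zero divisors over $S_{i_0}$ whose centres on each later $T_m$ are curves, hence distinct from the later components whose centre is the point $x_m$. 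Since the discrepancy-zero divisors over a Du Val germ are exactly the finitely many exceptional curves of its minimal resolution, the chain terminates. Your argument buys a cleaner conceptual reason for termination and a bound by the number of exceptional curves of $S_{i_0}$, at the cost of the surface adjunction bookkeeping; the paper's numerical argument avoids adjunction entirely and produces the explicit bound $12s_i$ directly from the pull-back formula for $T_k$.
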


\begin{proof}
Assuming that $x_i\in X_i$ is not Gorenstein, we shall find an index $j>i$ such that $x_j\in X_j$ is a quotient singularity or Gorenstein.

Fix an index $l>i$. If $x_k\in X_k$ is a non-Gorenstein point of type other than c$A/n_k$ for all $i\le k\le l$, in which $n_k=2$, $3$ or $4$, then by Lemma~\ref{lem:nG}(\ref{itm:nG-notA}), $\pi_k$ keeps general elephants for all $i\le k<l$. Let $S_i$ be the general elephant of $x_i\in X_i$ and let $T_k$ be the strict transform in $X_k$ of it. By Lemma~\ref{lem:keepge}, $(X_k,T_k)$ is crepant to $(X_i,S_i)$ and in particular $T_k\in\abs{-K_{X_k}}$ passes through the non-Gorenstein point $x_k$. Similarly to Definition~\ref{dfn:mi}, we consider the least rational number $s_k\in12^{-1}\bZ$ such that $\ord_{E_{k'}}T_k=s_k$ for infinitely many indices $k'>k$. It follows from the equality $\pi_k^*T_k=T_{k+1}+(a_k/n_k)E_{k+1}$ that $s_{k+1}<s_k$ and thus $s_{k+1}\le s_k-1/12$. Hence $l<i+12s_i$ and one attains an index $i'\ge i$ such that $x_{i'}\in X_{i'}$ is a non-Gorenstein point of type c$A/n_{i'}$ or Gorenstein.

Now assume that $x_{i'}\in X_{i'}$ is a non-Gorenstein point of type c$A/n_{i'}$ but not a quotient singularity. Fix an index $l>i'$. If $x_k\in X_k$ is neither a quotient singularity nor Gorenstein for all $i'\le k\le l$, then by Lemma~\ref{lem:nG}(\ref{itm:nG-A}), one inductively observes that $x_{k+1}\in X_{k+1}$ is of type c$A/n_{i'}$ with $e_{k+1}<e_k$ for all $i'\le k<l$. Hence $l<i+e_{i'}$ and together with the same lemma one attains an index $j\ge i'$ such that $x_j\in X_j$ is a quotient singularity or Gorenstein.
\end{proof}

Secondly we treat the case when $x_i\in X_i$ is Gorenstein.

\begin{lemma}\label{lem:ordI}
Suppose that both $x_i\in X_i$ and $x_{i+1}\in X_{i+1}$ are Gorenstein and that $m_{i+1}=m_i$. If there exists a pair $(w_2,w_3)$ of positive integers with $w_2\ge w_3$ such that the dimension $d(l)$ of the vector space $V_l=\pi_{i*}\sO_{X_{i+1}}(-lE_{i+1})/\pi_{i*}\sO_{X_{i+1}}(-(l+1)E_{i+1})$ is $\rd{l/w_3}+1$ for all $0\le l<w_2$, then $\ord_{\hat E_{i+1}}\sI_i\ge w_3$. If moreover $w_2>w_3$, then $\ord_{\hat E_{i+1}}\sI_i=w_3$ and $\sI_i$ has generators $z_1,z_2,z_3$ such that $\ord_{\hat E_{i+1}}z_k\ge w_2$ for $k=1,2$.
\end{lemma}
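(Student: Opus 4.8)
The plan is to read off the orders of the generators of $\sI_i$ from the Hilbert function $d(l)$ by comparing the filtration by order along $\hat E_{i+1}$ with the one obtained by restriction to the curve. Write $v=\ord_{\hat E_{i+1}}$ for the divisorial valuation on $\sO_{\hat X_i}$ and $\fa_l=\{f\in\sO_{\hat X_i}\mid v(f)\ge l\}$ for its valuation ideals, so that $d(l)=\dim_k\fa_l/\fa_{l+1}$ and the associated graded ring $G=\gr_v\sO_{\hat X_i}=\bigoplus_l\fa_l/\fa_{l+1}$ is an integral domain because $v$ is a valuation. The exact sequence $0\to\sI_i\to\sO_{\hat X_i}\to\sO_{\hat C_i}\to0$, filtered by $v$ and by the induced quotient filtration on $\sO_{\hat C_i}\simeq k[[t]]$, splits $d(l)=d_{\sI}(l)+\bar{d}(l)$ into the dimensions of the graded pieces of $\mathrm{in}(\sI_i)$ and of the image filtration on the curve. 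The whole point is to prove $\bar{d}(l)=1$ for every $l$; granting this, the hypothesis immediately gives $d_{\sI}(l)=\rd{l/w_3}$ for $0\le l<w_2$.

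To compute $\bar{d}(l)$ I would first establish the inequality $\ord_{\hat C_i}f\ge v(f)$ for every $f\in\sO_{\hat X_i}$. Since $x_{i+1}\in X_{i+1}$ is Gorenstein and $X_{i+1}$ is $\bQ$-factorial, the prime divisor $\hat E_{i+1}$ is Cartier near $\hat x_{i+1}$ by Lemma~\ref{lem:class}; writing $\hat E_{i+1}=(u=0)$ locally and using that the strict transform $\hat C_{i+1}$ is again regular, is not contained in $\hat E_{i+1}$, and maps isomorphically onto $\hat C_i$, restriction of $\pi_i^*f=u^{v(f)}(\cdots)$ to $\hat C_{i+1}$ yields $\ord_{\hat C_i}f\ge v(f)\cdot(\hat E_{i+1}\cdot\hat C_{i+1})\ge v(f)$. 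The hypothesis $m_{i+1}=m_i$ enters here: by the computation in the proof of Lemma~\ref{lem:li} it guarantees $\fm_i\sO_{X_{i+1}}\not\subset\fm_{i+1}^2$, so that the coordinate $z_4$ along $\hat C_i$ pulls back to a uniformiser and the intersection is transverse. Applying the inequality to $f=z_4$, together with $v(z_4)\ge1$ since $z_4\in\fm_i$ is contracted, forces $v(z_4)=1$. Consequently the image of $\fa_l$ in $k[[t]]$ is exactly $(t^l)$ for every $l$ (it contains $z_4^{\,l}$ and by the inequality cannot reach below $t^l$), so $\bar{d}(l)=1$. Substituting into the splitting gives $d_{\sI}(l)=\rd{l/w_3}$ for $0\le l<w_2$; in particular $d_{\sI}(l)=0$ for $l<w_3$, that is $\sI_i\subset\fa_{w_3}$ and $\ord_{\hat E_{i+1}}\sI_i\ge w_3$, which is the first assertion.

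For the sharper statement assume $w_2>w_3$. Then $d_{\sI}(w_3)=\rd{w_3/w_3}=1$ exhibits an element of $\sI_i$ of order exactly $w_3$, so $\ord_{\hat E_{i+1}}\sI_i=w_3$; I would take a generator $z_3$ of $\sI_i$ with $v(z_3)=w_3$ and let $\tau$ be its leading form in $G_{w_3}$. Since $G$ is a domain, $\tau$ is a non-zerodivisor, so $\dim_k\tau G_{l-w_3}=d(l-w_3)=\rd{l/w_3}$ for $l<w_2$, which equals $d_{\sI}(l)$; as $\tau G\subset\mathrm{in}(\sI_i)$ this forces $\mathrm{in}(\sI_i)_l=\tau\,G_{l-w_3}$ for all $l<w_2$. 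Hence the leading form of any element of $\sI_i$ of order below $w_2$ is divisible by $\tau$, so each of the two remaining generators of $\sI_i$ can be reduced modulo $\sO_{\hat X_i}$-multiples of $z_3$ to strictly raise its order; iterating this reduction $v$-adically produces generators $z_1,z_2,z_3$ of $\sI_i$ with $\ord_{\hat E_{i+1}}z_k\ge w_2$ for $k=1,2$.

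The crux is the second paragraph: the cross-comparison $\ord_{\hat C_i}f\ge v(f)$ combined with $v(z_4)=1$ is exactly what collapses the one-dimensional curve direction to precisely one dimension in each graded degree and so pins down $\bar{d}(l)=1$. This rests on $\hat E_{i+1}$ being Cartier at $\hat x_{i+1}$ and on the transversality forced by $m_{i+1}=m_i$; were the curve coordinate of order greater than one, the induced filtration on $k[[t]]$ would develop gaps and the Hilbert function could not take the prescribed values. Once $\bar{d}(l)=1$ is in hand, the splitting converts the arithmetic identity $d(l)=\rd{l/w_3}+1$ directly into the order estimates, and the final generator reduction is routine given that $G$ is a domain and that $\mathrm{in}(\sI_i)$ is principal, generated by $\tau$, throughout the range $l<w_2$.
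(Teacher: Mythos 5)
Your proof is correct, and it takes a genuinely different route from the paper's, though both rest on the same two geometric inputs: $\hat E_{i+1}$ is locally principal at the Gorenstein point $\hat x_{i+1}$ (Lemma~\ref{lem:class}), and $\hat x_{i+1}$ lies on $\hat C_{i+1}$, which maps isomorphically onto $\hat C_i$ --- this last fact is the first bullet of Proposition~\ref{prp:sequence}, and you use it silently when you write $(\hat E_{i+1}\cdot\hat C_{i+1})\ge1$, so cite it. The paper takes a general $z_4\in\fm_i$, uses $m_{i+1}=m_i$ together with factoriality at $x_{i+1}$ to see that $\pi_i^*z_4$ cuts out $E_{i+1}$, builds generators of $\fm_i$ adapted to the order filtration, and proves both assertions by contradiction (a low-order element of $\sI_i$ would force a nonzero constant into $\hat\fm_{i+1}$). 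Your inequality $\ord_{\hat C_i}f\ge\ord_{\hat E_{i+1}}f$ is exactly the mechanism behind those contradictions --- the paper's ``absurd'' steps amount to applying it to $f-\lambda z_4^w\in\fa_{w+1}$ --- but stating it once, and splitting $d(l)=d_{\sI}(l)+\bar d(l)$ along $0\to\sI_i\to\sO_{\hat X_i}\to\sO_{\hat C_i}\to0$, replaces the paper's coordinate manipulations (including the weighted-homogeneous factorisation used to prove independence of the monomials $z_3^{c_3}z_4^{c_4}$ in $V_l$) by dimension counts in the domain $\gr_v\sO_{\hat X_i}$. A notable consequence: your argument never actually needs $m_{i+1}=m_i$. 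The only properties of $z_4$ you use --- that a general element of $\fm_i$ restricts to a uniformiser of the regular curve $\hat C_i$ and has $\ord_{\hat E_{i+1}}z_4\ge1$ --- hold regardless of that hypothesis, so your appeal to it (via $\fm_i\sO_{X_{i+1}}\not\subset\fm_{i+1}^2$ and ``transversality'') is decorative; your key inequality only requires $(\hat E_{i+1}\cdot\hat C_{i+1})\ge1$, not $=1$. By contrast the paper genuinely uses the hypothesis, since its argument needs $E_{i+1}$ to be defined by the pullback of a function from $X_i$, not just by some local equation. Two points you leave implicit and should make explicit: that $\sI_i$ is generated by three elements (Mori's embedding quoted before Lemma~\ref{lem:li} is what licenses ``the two remaining generators''), and the flat-base-change identification of $V_l$, defined over $\sO_{X_i}$, with $\fa_l/\fa_{l+1}$ over $\sO_{\hat X_i}$ (an identification the paper also glosses).
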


\begin{proof}
We identify $V_l$ with $V_l\otimes_{\sO_{X_i}/\fm_i}\sO_{\hat X_i}/\hat\fm_i$. We write $E=E_{i+1}$ and $\hat E=\hat E_{i+1}$ for brevity. The assertion is obvious if $w_2=1$. We assume that $w_2\ge2$. Following the argument for \cite[lemma~3.5.2]{K24}, one finds generators $z_1,z_2,z_3,z_4$ of $\fm_i$ such that $\ord_Ez_4=1$ and $\ord_Ez_k\ge w_3$ for $k=1,2,3$. Indeed, since $d(1)\ge1$, a general element $z_4$ in $\fm_i$ has $\ord_Ez_4=1$. The monomial $z_4^l$ forms a basis of $V_l\simeq k$ as far as $l<w_3$. It follows that for $k=1,2,3$, $z_k$ is congruent modulo $\pi_{i*}\sO_{X_{i+1}}(-w_3E)$ to some polynomial $p_k(z_4)$ in $z_4$. Replace $z_k$ by $z_k-p_k$.

Further if $w_2>w_3$, then one finds $z_1,z_2,z_3,z_4$ so that $\ord_Ez_3=w_3$ and $\ord_Ez_k\ge w_2$ for $k=1,2$. Indeed, since $d(w_3)=2$, after permutation of $z_1,z_2,z_3$ the monomials $z_3$ and $z_4^{w_3}$ form a basis of $V_{w_3}\simeq k^2$, in which $\ord_Ez_3=w_3$. The order along $E$ of every weighted homogeneous polynomial in $z_3,z_4$ coincides with the weighted degree, because it is factorised as the product of polynomials of form either $z_3+\lambda z_4^{w_3}$ or $z_4$. It follows that the monomials $z_3^{c_3}z_4^{c_4}$ with $w_3c_3+c_4=l$ are linearly independent in $V_l$. As far as $l<w_2$, $d(l)$ equals the number of these monomials and hence they form a basis of $V_l$. Thus for $k=1,2$, $z_k$ is congruent modulo $\pi_{i*}\sO_{X_{i+1}}(-w_2E)$ to some polynomial $q_k(z_3,z_4)$ in $z_3,z_4$. Replace $z_k$ by $z_k-q_k$ so that $\ord_Ez_k\ge w_2$.

Recall that $z_4$ is taken generally from $\fm_i$. It follows from the assumption $m_{i+1}=m_i$ that $\fm_i\sO_{X_{i+1}}$ is not contained in $\fm_{i+1}^2$. For the same reason as at the beginning of the proof of Lemma~\ref{lem:li}, $E$ is locally at $x_{i+1}$ given by $z_4$.

We shall prove that $\ord_{\hat E}\sI_i\ge w_3$. Otherwise, there would exist an element $f$ in $\sI_i$ whose order $w$ along $\hat E$ is less than $w_3$. Since the divisor $\hat F$ on $\hat X_i$ defined by $f$ contains $\hat C_i$, its strict transform $\hat F'$ contains $\hat C_{i+1}$ and thus the function $fz_4^{-w}$ defining $\hat F'$ at $\hat x_{i+1}$ vanishes there, that is, $fz_4^{-w}\in\hat\fm_{i+1}$. On the other hand, since $V_w=kz_4^w$, there exists a non-zero constant $\lambda\in k$ such that $h=f-\lambda z_4^w$ is of order greater than $w$ along $\hat E$ and in particular $hz_4^{-w}\in\hat\fm_{i+1}$. Then $\lambda=fz_4^{-w}-hz_4^{-w}\in\hat\fm_{i+1}$, which is absurd.

Suppose that $w_2>w_3$. Since $\hat\fm_i=\sI_i+z_4\sO_{\hat X_i}$, the vector space $V_{w_3}\simeq k^2$ is spanned by $\sI_i$ and $z_4^{w_3}$. Thus $\ord_{\hat E}\sI_i=w_3$ and one finds generators $\hat z_1,\hat z_2,\hat z_3,z_4$ of $\hat\fm_i$ such that $\hat z_1,\hat z_2,\hat z_3$ generate $\sI_i$ and such that $\hat z_3$ and $z_4^{w_3}$ form a basis of $V_{w_3}$. Further for $k=1,2$, there exists a polynomial $\hat q_k(\hat z_3,z_4)$ in $\hat z_3,z_4$ such that $\hat z_k-\hat q_k$ is of order at least $w_2$ along $\hat E$. No monomial in $z_4$ appears in $\hat q_k$ for the same reason as in the preceding paragraph. Indeed, if one had an expression $\hat q_k=\hat z_3r(\hat z_3,z_4)+s(z_4)$ such that the order $w$ along $\hat E$ of $s$ is less than $w_2$, then $\hat z_k-\hat z_3r=s+(\hat z_k-\hat q_k)\in\sI_i$ would also be of order $w$ along $\hat E$ and $sz_4^{-w}=(\hat z_k-\hat z_3r)z_4^{-w}-(\hat z_k-\hat q_k)z_4^{-w}$ would belong to $\hat\fm_{i+1}$, which is absurd. Hence $\hat q_k\in\hat z_3\sO_{\hat X_i}\subset\sI_i$. Replacing $\hat z_k$ by $\hat z_k-\hat q_k$, one attains the inequality $\ord_{\hat E}z_k\ge w_2$.
\end{proof}

\begin{proposition}\label{prp:G-G}
If both $x_i\in X_i$ and $x_{i+1}\in X_{i+1}$ are Gorenstein and $x_i\in X_i$ is singular, then $m_{i+1}<m_i$ or $l_{i+1}<l_i$.
\end{proposition}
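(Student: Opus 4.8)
The plan is to verify, for the single contraction $\pi_i$, the hypotheses of Lemma~\ref{lem:li} applied with $j=i+1$: its conclusion is exactly ``$m_{i+1}<m_i$ or $l_{i+1}<l_i$'', so nothing more is needed once it applies. Concretely I must exhibit generators $z_1,z_2,z_3$ of $\sI_i$, completed to generators $z_1,z_2,z_3,z_4$ of $\hat\fm_i$, whose orders $w_k=\ord_{\hat E_{i+1}}z_k$ with $w_1\ge w_2\ge w_3$ satisfy $a_i<w_2+w_3$, where $a_i=d_{E_{i+1}}(X_i)$ is the discrepancy of $\pi_i$ (an integer since $x_i\in X_i$ is Gorenstein). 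Working over $\bC$, I would run through the numerical classification of $\pi_i$ in Table~\ref{tbl:numerical} using the dimension data $d(l)$ of Lemma~\ref{lem:dim}; the hypothesis that $x_i\in X_i$ be singular enters only to guarantee $a_i<r_1+r_2$ in the ordinary case. Since $\sI_i\subset\fm_i$, every generator satisfies $w_k\ge\ord_{\hat E_{i+1}}\fm_i\ge1$ automatically.

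Two families of cases I would settle at once, without any assumption on $m_{i+1}$. If $a_i=1$, then $w_2+w_3\ge2>a_i$ from the trivial bound above, so Lemma~\ref{lem:li} applies for any choice of generators. If $\pi_i$ is of exceptional type with $a_i\ge2$ and $d(1)=0$ --- by Lemma~\ref{lem:dim}(\ref{itm:dim-exc}) exactly the types \textup{e5} and \textup{e9}, which have $a_i=2$ by Theorem~\ref{thm:notmin} --- then the vanishing $d(1)=0$ forces $\ord_{\hat E_{i+1}}\fm_i\ge2$, whence every $w_k\ge2$ and $w_2+w_3\ge4>a_i$, and again Lemma~\ref{lem:li} applies.

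For the remaining cases I would assume $m_{i+1}=m_i$ (otherwise the conclusion is immediate) so as to invoke Lemma~\ref{lem:ordI}, whose role is to convert the floor-shape of $d(l)$ into order bounds on $\sI_i$. The recipe is to extract a pair $W_2\ge W_3$ with $d(l)=\rd{l/W_3}+1$ on $0\le l<W_2$ and to check $W_2+W_3>a_i$: the weak half of Lemma~\ref{lem:ordI} then gives $\ord_{\hat E_{i+1}}\sI_i\ge W_3$, hence $w_2+w_3\ge2W_3$, while its strong half (when $W_2>W_3$) supplies generators with $w_3=W_3$ and $w_2\ge W_2$, hence $w_2+w_3\ge W_2+W_3$ --- in either case $w_2+w_3>a_i$ and Lemma~\ref{lem:li} concludes $l_{i+1}<l_i$. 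In the ordinary case $d(l)=1+\rd{l/r_1}+\rd{l/r_2}$ with $r_1\le r_2$ and $a_i<r_1+r_2$, I would take $W_3=\min(a_i,r_1)$ and $W_2=\min(a_i,r_2)$; the floor $\rd{l/r_2}$ (and, in the extreme subcase $a_i\le r_1$, also $\rd{l/r_1}$) vanishes on the range, so the displayed shape holds, $W_2\ge W_3$, and the sum $W_2+W_3\in\{2a_i,\,a_i+r_1,\,r_1+r_2\}$ always exceeds $a_i$. For the exceptional types with $d(1)=1$, namely \textup{e1}, \textup{e2}, \textup{e3}, I would take $(W_2,W_3)=(2,2)$ when $a_i\le3$ (covering \textup{e2} with $a_i=2$, \textup{e3} with $a_i=3$, and \textup{e1} with $a_i=2$), giving $w_2+w_3\ge4>a_i$; for \textup{e1} with $a_i=4$ the choice $(2,2)$ is too weak, so I would instead use Lemma~\ref{lem:dim}(\ref{itm:dim-e1}) and take $(W_2,W_3)=(3,3)$ when $d(2)=1$ and $(3,2)$ when $d(2)=2$, so that $w_2+w_3\ge6$ or $\ge5$, each exceeding $4$.

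The main obstacle is precisely this bookkeeping across the classification: matching each type of $\pi_i$ to a correct floor-shape $(W_2,W_3)$ extracted from $d(l)$ and verifying the strict inequality $W_2+W_3>a_i$ uniformly. The genuinely special points are \textup{e5} and \textup{e9}, where no usable $W_3\ge2$ is visible from the floor-shape of $d(l)$ and one must instead exploit the vanishing $d(1)=0$, and \textup{e1} with $a_i=4$, where the finer value $d(2)$ of Lemma~\ref{lem:dim}(\ref{itm:dim-e1}) is needed to beat the discrepancy.
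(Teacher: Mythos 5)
Your proposal is correct and follows essentially the same route as the paper: reduce everything to the hypothesis $a_i<w_2+w_3$ of Lemma~\ref{lem:li} by extracting the pair $(w_2,w_3)$ from the dimension data $d(l)$ via Lemma~\ref{lem:ordI}, with exactly the same case division (ordinary type with $(\min\{a_i,r_2\},\min\{a_i,r_1\})$, exceptional type with $(2,2)$, and the finer $d(2)$-split for e1 with $a_i=4$). The only cosmetic difference is the $d(1)=0$ case (e5, e9): the paper turns $\fm_i\sO_{X_{i+1}}\subset\fm_{i+1}^2$ into a contradiction with $m_{i+1}=m_i$, whereas you feed $w_k\ge2$ directly into Lemma~\ref{lem:li}; both are valid.
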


\begin{proof}
If $a_i=1$, then the assertion follows from Lemma~\ref{lem:li}. Assuming that $a_i\ge2$ and that $m_{i+1}=m_i$, we shall prove that $l_{i+1}<l_i$.

If $\pi_i$ is of ordinary type as in Table~\ref{tbl:numerical}, then by Lemma~\ref{lem:dim}(\ref{itm:dim-ord}), the pair $(w_2,w_3)=(\min\{a_i,r_2\},\min\{a_i,r_1\})$ satisfies the assumption in Lemma~\ref{lem:ordI} with $a_i<w_2+w_3$, where we assume that $r_1\le r_2$ and where we set $(r_1,r_2)=(1,r)$ if type o2 and set $(r_1,r_2)=(1,1)$ if type o1. Thus the inequality $l_{i+1}<l_i$ follows from Lemma~\ref{lem:li}.

If $\pi_i$ is of exceptional type, then $d(1)\le1$ and $a_i\le4$ from Lemma~\ref{lem:dim}(\ref{itm:dim-exc}) and Table~\ref{tbl:notmin} respectively. If $d(1)=0$, then $\fm_i\sO_{X_{i+1}}\subset\sO_{X_{i+1}}(-2E_{i+1})\subset\fm_{i+1}^2$, which contradicts our assumption $m_{i+1}=m_i$. Hence $d(1)=1$. If $a_i=2$ or $3$, then we set $(w_2,w_3)=(2,2)$. If $a_i=4$, which occurs only if $\pi_i$ is of type e1 in Table~\ref{tbl:notmin}, then we set $(w_2,w_3)=(3,3)$ or $(3,2)$ according to $d(2)=1$ or $2$ in Lemma~\ref{lem:dim}(\ref{itm:dim-e1}). The pair $(w_2,w_3)$ satisfies the assumption in Lemma~\ref{lem:ordI} with $a_i<w_2+w_3$. Hence the inequality $l_{i+1}<l_i$ follows from Lemma~\ref{lem:li}.
\end{proof}

\begin{lemma}\label{lem:G-nG}
If $x_i\in X_i$ is Gorenstein but $x_{i+1}\in X_{i+1}$ is not Gorenstein, then either $x_{i+1}\in X_{i+1}$ is a quotient singularity, $x_{i+2}\in X_{i+2}$ is at most a quotient singularity or $\pi_i$ keeps general elephants.
\end{lemma}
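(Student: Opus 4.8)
The plan is to reduce the statement to the explicit classification of Section~\ref{sct:divcont}. If $\pi_i$ keeps general elephants the third alternative holds, so I would assume it does not. As $x_i\in X_i$ is Gorenstein its index $n_i$ equals one, and Theorem~\ref{thm:keepge} then restricts $\pi_i$ to two families: either $\pi_i$ is of ordinary type with $a_i\ge2$ and $x_i\in X_i$ of type c$A$ or c$D$, or $\pi_i$ is of exceptional type with $(\pi_i,a_i)$ equal to $(\textup{e1},4)$, $(\textup{e1},2)$, $(\textup{e2},2)$ or $(\textup{e3},3)$. In each case I would read off the singularities lying on $X_{i+1}$, keeping in mind that $x_{i+1}$ is the one among them that meets the strict transform $\hat C_{i+1}$.

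The case when $x_i$ is of type c$A$ is settled at once: Theorem~\ref{thm:clsA}(\ref{itm:clsA-ord}) realises $\pi_i$ as an explicit weighted blow-up whose only non-Gorenstein points are the cyclic quotient singularities on the charts $o_1$ and $o_2$, just as in the computation recalled in the proof of Lemma~\ref{lem:nG}(\ref{itm:nG-A}) with $n=1$, where the point $o_4$ becomes Gorenstein. Hence $x_{i+1}$, being non-Gorenstein, is a quotient singularity and the first alternative holds.

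In the surviving cases, the ordinary type o3 over a c$D$ point and the exceptional types e1, e2, e3, I would argue by the following dichotomy. Reading the types of the non-Gorenstein points of $X_{i+1}$ from the classification in \cite[chapter~3]{K24}, if $x_{i+1}$ is a cyclic quotient singularity then the first alternative holds. Otherwise $x_{i+1}$ is a genuine non-Gorenstein singularity of small index recorded by the numerical datum $J$ in Table~\ref{tbl:numerical} (such as a point of type c$A/n$ of axial multiplicity two, or of type c$Ax/4$, whose basket contributes the pairs $(r,1),(r,1)$ or $(2,1),(4,1)$). In this event I would pass to the next contraction $\pi_{i+1}\colon E_{i+2}\subset X_{i+2}\to x_{i+1}\in X_{i+1}$ and combine Lemma~\ref{lem:nG} with the explicit description of the singularities it produces; since the axial multiplicity or the index is forced to drop, the genuine non-Gorenstein behaviour is exhausted after this single further step and $x_{i+2}$ becomes at most a quotient singularity, giving the second alternative.

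The hard part will be this last reduction: confirming from the classification that one more divisorial contraction degenerates the genuine non-Gorenstein singularity at $x_{i+1}$ to a quotient singularity or a smooth point at $x_{i+2}$, rather than leaving behind a genuine Gorenstein or non-Gorenstein singularity. Here I expect to exploit both the bound on the axial multiplicity coming from the few values of $a_i$ admitted in the exceptional types and the constraint $\hat x_{i+1}=\hat C_{i+1}\cap\hat E_{i+1}$ pinning $x_{i+1}$ to the regular curve, so as to eliminate the persisting non-quotient possibilities.
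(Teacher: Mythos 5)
Your reduction to the classification is the right skeleton and matches the paper's opening moves in part (the c$A$ ordinary case is indeed immediate from Theorem~\ref{thm:clsA}(\ref{itm:clsA-ord}), and for the other ordinary types the entries $(r,1)$ of $J$ in Table~\ref{tbl:numerical} come from single fictitious singularities, hence quotient singularities). But the final step --- showing that when $x_{i+1}$ is a genuine non-quotient non-Gorenstein point, one more contraction lands on at most a quotient singularity --- is a real gap, and the tools you propose for it (the drop of axial multiplicity or index via Lemma~\ref{lem:nG}, and the incidence $\hat x_{i+1}=\hat C_{i+1}\cap\hat E_{i+1}$) are not the ones that make it work. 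A drop of axial multiplicity only tells you, via Lemma~\ref{lem:nG}(\ref{itm:nG-A}), that $x_{i+2}$ is a quotient singularity, \emph{a Gorenstein point}, or a c$A/n$ point of smaller axial multiplicity; the Gorenstein alternative can be a non-quotient cDV point (e.g.\ a point on the $x_3x_4$-line in the chart description recalled in the proof of Lemma~\ref{lem:nG}), so local classification alone does not exhaust the ``genuine'' behaviour in one step. Purely numerically, after narrowing by the coprimality of $n_{i+1}$ with $a_i$ (forced by $K_{X_{i+1}}\sim a_iE_{i+1}$ at the non-hidden point $x_{i+1}$, a constraint you do not invoke), the surviving triples $(\pi_i,a_i,x_{i+1})$ are $(\textup{e2},2,\textup{c}A/n_{i+1})$, $(\textup{e2},2,\textup{c}D/3)$ and $(\textup{e3},3,\textup{c}Ax/4)$, and for the latter two, as well as for the subcase $a_{i+1}=1$ of the first, there is no local reason for $X_{i+2}$ to have only quotient singularities.

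What actually eliminates those subcases in the paper is a \emph{global} property of the sequence: the third bullet of Proposition~\ref{prp:sequence}, namely $d_{E_{i+1}}(X_i)/\ord_{E_{i+1}}\fm_i\le d_{E_j}(X_i)/\ord_{E_j}\fm_i$ for all $j>i$. Using $\ord_{E_{i+1}}\fm_i=1$ (from Lemma~\ref{lem:dim}(\ref{itm:dim-exc})) and the forced discrepancy $1/n_{i+1}$ of $\pi_{i+1}$ from Theorem~\ref{thm:notmin}, one computes $d_{E_{i+2}}(X_i)/\ord_{E_{i+2}}\fm_i<a_i=d_{E_{i+1}}(X_i)/\ord_{E_{i+1}}\fm_i$ in the cases $(\textup{e2},2,\textup{c}D/3)$, $(\textup{e3},3,\textup{c}Ax/4)$ and $(\textup{e2},2,\textup{c}A/n_{i+1})$ with $a_{i+1}=1$, contradicting minimality; so these simply do not occur. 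Only $(\textup{e2},2,\textup{c}A/n_{i+1})$ with $a_{i+1}=2$ survives, where $e_{i+1}=2$ pins down $f=z_3^{n_{i+1}}+z_4^2$ and an explicit chart computation of the weighted blow-up $\wt(z_1,z_2,z_3,z_4)=\frac{1}{n_{i+1}}(r_1,r_2,2,n_{i+1})$ shows $X_{i+2}$ has only quotient singularities. Without importing the minimality condition from the construction of the sequence, your plan cannot close.
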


\begin{proof}
If $a_i=1$, then $\pi_i$ keeps general elephants. Assuming that $a_i\ge2$ and that $x_{i+1}\in X_{i+1}$ is not a quotient singularity, we shall prove that $x_{i+2}\in X_{i+2}$ is at most a quotient singularity.

Since $x_i\in X_i$ is Gorenstein, the non-Gorenstein point $x_{i+1}\in X_{i+1}$ is not hidden in the sense of Definition~\ref{dfn:hidden}. Thus $x_{i+1}\in X_{i+1}$ contributes to the set $J$ defined from $\pi_i$ as in Section~\ref{sct:divcont}. The list of $J$ is in Table~\ref{tbl:numerical}. Since $x_{i+1}\in X_{i+1}$ is not a quotient singularity, $\pi_i$ must be of exceptional type. It follows from the relation $K_{X_{i+1}}\sim a_iE_{i+1}$ at $x_{i+1}$ that $n_{i+1}$ is coprime to $a_i$. Since $a_i\ge2$, from Table~\ref{tbl:notmin} for $(\pi_i,a_i)$ and from Table~\ref{tbl:numerical} for $x_{i+1}$, the triple $(\pi_i,a_i,x_{i+1})$ is $(\textrm{e2},2,\textrm{c$A/n_{i+1}$})$ with $n_{i+1}$ odd, $(\textrm{e2},2,\textrm{c$D/3$})$ or $(\textrm{e3},3,\textrm{c$Ax/4$})$. By Lemma~\ref{lem:dim}(\ref{itm:dim-exc}), $\ord_{E_{i+1}}\fm_i=1$ in every case.

In the cases $(\textrm{e2},2,\textrm{c$D/3$})$ and $(\textrm{e3},3,\textrm{c$Ax/4$})$, where $n_{i+1}=a_i+1$, it follows from Theorem~\ref{thm:notmin} that $\pi_{i+1}$ has discrepancy $1/(a_i+1)$. Since $K_{X_{i+2}}=(\pi_i\circ\pi_{i+1})^*K_{X_i}+a_i\pi_{i+1}^*E_{i+1}+1/(a_i+1)E_{i+2}$, one has
\[
d_{E_{i+2}}(X_i)=a_i\ord_{E_{i+2}}E_{i+1}+\frac{1}{a_i+1}\in\frac{a_i\bZ+1}{a_i+1}\cap\bZ=1+a_i\bZ
\]
and can write $\ord_{E_{i+2}}E_{i+1}=c+1/(a_i+1)$ and $d_{E_{i+2}}(X_i)=a_ic+1$ by some non-negative integer $c$. Then $\ord_{E_{i+2}}\fm_i\ge\ru{\ord_{E_{i+2}}E_{i+1}}=c+1$ and
\[
\frac{d_{E_{i+2}}(X_i)}{\ord_{E_{i+2}}\fm_i}\le\frac{a_ic+1}{c+1}<a_i=\frac{d_{E_{i+1}}(X_i)}{\ord_{E_{i+1}}\fm_i},
\]
which contradicts the last condition in Proposition~\ref{prp:sequence}.

In the remaining case $(\textrm{e2},2,\textrm{c$A/n_{i+1}$})$, it follows from Theorem~\ref{thm:clsA} that $x_{i+1}\in X_{i+1}$ is analytically given by a function $z_1z_2+f(z_3^{n_{i+1}},z_4)$ in $\fD^4/\bZ_{n_{i+1}}(1,-1,b,0)$ and $\pi_{i+1}$ is the weighted blow-up with $\wt(z_1,z_2,z_3,z_4)=\frac{1}{n_{i+1}}(r_1,r_2,a_{i+1},n_{i+1})$. Since $\pi_i$ has $J=\{(n_{i+1},1),(n_{i+1},1)\}$, the axial multiplicity $e_{i+1}$ of $x_{i+1}\in X_{i+1}$ is two. Thus $z_4^2$ appears in $f$ and the weighted order $(r_1+r_2)/n_{i+1}$ of $f$ is one or two. Since $a_{i+1}n_{i+1}$ divides $r_1+r_2=n_{i+1}$ or $2n_{i+1}$, one has $a_{i+1}=1$ or $2$.

If $a_{i+1}=1$, then by the same argument as above, $d_{E_{i+2}}(X_i)=2\ord_{E_{i+2}}E_{i+1}+1/n_{i+1}$ and one can write $\ord_{E_{i+2}}E_{i+1}=c+(n_{i+1}-1)/2n_{i+1}$ and $d_{E_{i+2}}(X_i)=2c+1$. Then $d_{E_{i+2}}(X_i)/\ord_{E_{i+2}}\fm_i<2=d_{E_{i+1}}(X_i)/\ord_{E_{i+1}}\fm_i$, which is a contradiction. Hence $a_{i+1}=2$, $r_1+r_2=2n_{i+1}$ and $z_3^{n_{i+1}}$ appears in $f$. Then replacing coordinates, one attains the expression $f=z_3^{n_{i+1}}+z_4^2$. Now by a direct computation of the weighted blow-up $\pi_{i+1}$, one can check that $X_{i+2}$ has only quotient singularities.
\end{proof}

\begin{proposition}\label{prp:G-nG-G}
Consider the composite $\pi_{ij}\colon x_j\in X_j\to x_i\in X_i$ to a Gorenstein point $x_i\in X_i$ where $i<j$ such that
\begin{itemize}
\item
$\pi_l$ keeps general elephants for all $i\le l<j$,
\item
$x_l\in X_l$ is not Gorenstein for all $i<l<j$ and
\item
either $\pi_j$ does not keep general elephants or $x_j\in X_j$ is Gorenstein.
\end{itemize}
Then one of the following holds.
\begin{enumerate}
\item
$m_j<m_i$.
\item
$x_j\in X_j$ is Gorenstein and $l_j<l_i$.
\item
$x_j\in X_j$ or $x_{j+1}\in X_{j+1}$ is at most a quotient singularity.
\end{enumerate}
\end{proposition}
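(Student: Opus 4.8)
The plan is to route everything through the general elephant of the Gorenstein germ $x_i\in X_i$ and then feed the resulting numerical inequality into Lemma~\ref{lem:li}. First I would record the key relation $d_{E_j}(X_i)=\ord_{E_j}\fm_i$. Since $\pi_l$ keeps general elephants for all $i\le l<j$ and $x_l$ is not Gorenstein for $i<l<j$, Lemma~\ref{lem:keepge} applies to the general elephant $S_i$ of $x_i\in X_i$ and shows that $(X_j,T_j)$ is crepant to $(X_i,S_i)$ for the strict transform $T_j$ of $S_i$. As $E_j$ is a prime divisor on $X_j$ not contained in $T_j$, one has $a_{E_j}(X_j,T_j)=1$, whence $a_{E_j}(X_i,S_i)=1$ by crepancy and therefore $d_{E_j}(X_i)=\ord_{E_j}S_i$. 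Because $x_i$ is Gorenstein, $S_i$ is a general member of $\fm_i$, so $\ord_{E_j}S_i=\ord_{E_j}\fm_i$ and the relation follows. Note that this uses neither the type of $x_j$ nor whether $\pi_j$ keeps general elephants.

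Suppose first that $x_j\in X_j$ is Gorenstein. Then I would apply Lemma~\ref{lem:li} to the composite $\pi_{ij}$. Take Mori's generators $z_1,z_2,z_3$ of $\sI_i$ together with a general $z_4$, ordered so that $w_1\ge w_2\ge w_3$ for $w_k=\ord_{\hat E_j}z_k$. Since $z_3\in\sI_i\subset\fm_i$ and $\hat E_j$ is centred over $x_i$, the relation above gives $d_{E_j}(X_i)=\ord_{E_j}\fm_i=\ord_{\hat E_j}\hat\fm_i\le w_3<w_2+w_3$, where the strict inequality uses only $w_2\ge1$ (orders along $E_j$ and $\hat E_j$ agree by the construction of Section~\ref{sct:limit}). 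Hence the numerical hypothesis of Lemma~\ref{lem:li} is satisfied automatically, and the lemma yields $m_j<m_i$ or $l_j<l_i$; as $x_j$ is Gorenstein this is conclusion (1) or (2). This is the clean half: the purpose of keeping general elephants along the whole chain is exactly to make the hypothesis of Lemma~\ref{lem:li} free of charge.

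It remains to treat the case when $x_j\in X_j$ is not Gorenstein, in which the last hypothesis of the proposition forces $\pi_j$ not to keep general elephants. By Theorem~\ref{thm:keepge}, $\pi_j$ is then of ordinary type with $a_j/n_j>1$ and $x_j$ is of type c$A/n_j$. If $m_j<m_i$ we are in conclusion (1), so I would assume $m_j=m_i$, equivalently $\fm_i\sO_{X_j}\not\subset\fm_j^2$. Under the explicit description of $\pi_j$ in Theorem~\ref{thm:clsA}(\ref{itm:clsA-ord}), the chart-by-chart analysis carried out in the proof of Lemma~\ref{lem:nG} shows that $x_{j+1}$ is a quotient singularity unless it lies on the chart $o_4$ as a point of type c$A/n_j$. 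In that remaining situation I would rerun the computation from the proof of Lemma~\ref{lem:G-nG}: the constraint on the axial multiplicity together with $a_j/n_j>1$ pins down the defining equation, and either the minimality of $d_{E_{i+1}}(X_i)/\ord_{E_{i+1}}\fm_i$ in Proposition~\ref{prp:sequence} is violated (so the sub-case does not occur) or a direct weighted blow-up exhibits only quotient singularities on $X_{j+1}$, giving conclusion (3).

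The main obstacle is this last case, where no single numerical lemma is available: one must descend to the explicit coordinates of the c$A/n_j$ singularity $x_j$, read off the quotient singularities produced by $\pi_j$ on each chart, and match this data against the minimality condition of Proposition~\ref{prp:sequence} in order to exclude the configurations in which neither $x_j$ nor $x_{j+1}$ is at most a quotient singularity. The delicate point is to use the relation $d_{E_j}(X_i)=\ord_{E_j}\fm_i$ together with the discrepancy $a_j/n_j$ of $\pi_j$ to locate $\hat x_{j+1}=\hat C_{j+1}\cap\hat E_{j+1}$ on the correct chart, precisely as in the proofs of Lemmas~\ref{lem:nG} and~\ref{lem:G-nG}.
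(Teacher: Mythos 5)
Your treatment of the case when $x_j\in X_j$ is Gorenstein is correct and in fact a mild streamlining of the paper's argument: the identity $d_{E_j}(X_i)=\ord_{E_j}S_i=\ord_{E_j}\fm_i\le w_3<w_2+w_3$, extracted from the crepancy $K_{X_j}+T_j=\pi_{ij}^*(K_{X_i}+S_i)$ supplied by Lemma~\ref{lem:keepge}, feeds straight into Lemma~\ref{lem:li} and replaces the paper's dichotomy on whether the exceptional divisor $D$ in $\pi_{ij}^*S_i=T_j+D$ is prime at $x_j$.

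The non-Gorenstein half is where the substance of the proposition lies, and your sketch does not close it. First, you never address the possibility that $x_j$ is a \emph{hidden} singularity of $\pi_{j-1}$ in the sense of Definition~\ref{dfn:hidden}: since $x_{j-1}$ may itself be non-Gorenstein when $j>i+1$, the index $n_j$ may divide $n_{j-1}$ and $E_j$ may be Cartier at $x_j$, in which case $x_j$ contributes nothing to the set $J$ of $\pi_{j-1}$ and no bound on its axial multiplicity is available. The paper disposes of this case separately, by factorising $\sO_{X_j}(-\pi_{ij}^*S_i)$ at $x_j$ into the two non-trivial ideals $\sO_{X_j}(-E_j)$ and $\sO_{X_j}(-T_j-(D-E_j))$, which forces $\fm_i\sO_{X_j}\subset\fm_j^2$ and hence $m_j<m_i$. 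Second, in the non-hidden case the ``constraint on the axial multiplicity'' you invoke has to be produced: it is the bound coming from the contribution $e_j\times(n_j,v)$ of $x_j$ to $J(\pi_{j-1})$ read against Table~\ref{tbl:numerical}, which combined with the inequality $a_j\le e_j$ from Theorem~\ref{thm:clsA}(\ref{itm:clsA-ord}) (the weighted order of $f$ is a positive multiple of $a_j$ and at most the order of $f(0,z_4)$) and with $a_j>n_j\ge2$ pins the situation down to $(\textup{e13},2,3)$ with $a_j/n_j=3/2$ and $f=z_3^2+z_4^3$. Your proposed substitute---violating the minimality condition of Proposition~\ref{prp:sequence} as in Lemma~\ref{lem:G-nG}---is not available in the same form here: that computation relies on $\ord_{E_{i+1}}\fm_i=1$ and on there being a single exceptional divisor between the two models, neither of which holds for the composite $\pi_{ij}$ with $j>i+1$. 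Finally, the claim that $x_{j+1}$ is automatically a quotient singularity off the chart $o_4$ overlooks the Gorenstein cDV points that the weighted blow-up of a c$A/n$ point can produce along the $x_3x_4$-line; only after the equation has been reduced to $z_1z_2+z_3^2+z_4^3$ does the explicit computation show that $X_{j+1}$ has quotient singularities only.
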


\begin{proof}
The general elephant $S_i$ of the Gorenstein point $x_i\in X_i$ is nothing but the general hyperplane section through $x_i$. Let $T_j$ be the strict transform in $X_j$ of $S_i$. By Lemma~\ref{lem:keepge}, $K_{X_j}+T_j=\pi_{ij}^*(K_{X_i}+S_i)$. Define the $\pi_{ij}$-exceptional Weil divisor $D$ by the equality $K_{X_j}=\pi_{ij}^*K_{X_i}+D$. Then $\pi_{ij}^*S_i=T_j+D$ and hence $\fm_i\sO_{X_j}\subset\sO_{X_j}(-D)$. Note that $E_j\le D$.

Suppose that $x_j\in X_j$ is Gorenstein. If $D$ is prime at $x_j$, then $d_{E_j}(X_i)=1$ and Lemma~\ref{lem:li} yields $m_j<m_i$ or $l_j<l_i$. If $D$ is not prime at $x_j$, then $\fm_i\sO_{X_j}\subset\sO_{X_j}(-D)\subset\fm_j^2$ and thus $m_j<2m_j\le m_i$.

Suppose that $x_j\in X_j$ is not Gorenstein, in which $x_j\in T_j$. If it is hidden, that is, $E_j$ is Cartier at $x_j$, then the ideal $\sO_{X_j}(-\pi_{ij}^*S_i)$ in $\sO_{X_j}$ is at $x_j$ decomposed as the product of non-trivial ideals $\sO_{X_j}(-E_j)$ and $\sO_{X_j}(-T_j-(D-E_j))$. This implies that $\fm_i\sO_{X_j}\subset\fm_j^2$ and $m_j<m_i$. Henceforth we shall assume that $x_j\in X_j$ is not hidden.

By assumption, $\pi_j$ does not keep general elephants and in particular $a_j/n_j>1$. By Lemma~\ref{lem:nG}(\ref{itm:nG-notA}), $x_j\in X_j$ is of type c$A/n_j$. We may assume that it is not a quotient singularity. Since $x_j\in X_j$ contributes to the set $J$ defined from $\pi_{j-1}$ as a subset of form $e_j\times(n_j,v)$ where $e_j$ is the axial multiplicity, one sees from Table~\ref{tbl:numerical} that the triple $(\pi_{j-1},n_j,e_j)$ is $(\textrm{e2},n_j,2)$, $(\textrm{e13},2,3)$, $(\textrm{e13},2,2)$ or $(\textrm{e14},3,2)$.

The following contraction $\pi_j$ is to the point $x_j\in X_j$ of type c$A/n_j$ and it is completely classified as in Theorem~\ref{thm:clsA}. It follows that $a_j\le e_j$. Indeed, for the analytic function $z_1z_2+f(z_3^{n_j},z_4)$ defining $x_j\in X_j$, the weighted order of $f$ is a multiple of $a_j$ and at most $e_j$. Since $a_j/n_j>1$, the triple must be $(\textrm{e13},2,3)$ with $a_j/n_j=3/2$. In this case, $x_j\in X_j$ is analytically given by $z_1z_2+f(z_3^2,z_4)$ in $\fD^4/\bZ_2(1,1,1,0)$ and $\pi_j$ is the weighted blow-up with $\wt(z_1,z_2,z_3,z_4)=\frac{1}{2}(5,1,3,2)$, where $f$ is of weighted order three and $z_3^2$ and $z_4^3$ appear in $f$. Replacing coordinates, one attains the expression $f=z_3^2+z_4^3$ and can directly check that $X_{j+1}$ has only quotient singularities.
\end{proof}

\begin{proof}[Proof of Theorem~\textup{\ref{thm:sequence}}]
Note that if $x_i\in X_i$ is not smooth but a quotient singularity, then as shown by Kawamata \cite{Ka96}, the following contraction $\pi_i$ is uniquely determined as a weighted blow-up and $x_{i+1}\in X_{i+1}$ is at most a quotient singularity of index less than $n_i$. Hence $x_j\in X_j$ is smooth for some index $i<j<i+n_i$.

We shall derive a contradiction by assuming that $x_i\in X_i$ is neither smooth nor a quotient singularity for all $i$. By Lemma~\ref{lem:mi}, there exists an index $i_0$ such that $m_i$ is constant for $i\ge i_0$. By Corollary~\ref{crl:nG}, it suffices to show that for $i\ge i_0$, if $x_i\in X_i$ is Gorenstein, then there exists an index $j>i$ such that $x_j\in X_j$ is Gorenstein and such that $l_j<l_i$. If $x_{i+1}\in X_{i+1}$ is Gorenstein, then the inequality $l_{i+1}<l_i$ follows from Proposition~\ref{prp:G-G}. If $x_{i+1}\in X_{i+1}$ is not Gorenstein, then it follows from Lemma~\ref{lem:G-nG} together with Corollary~\ref{crl:nG} that there exists a composite $\pi_{ij}\colon x_j\in X_j\to x_i\in X_i$ for some $j>i$ which satisfies the assumptions in Proposition~\ref{prp:G-nG-G}. Then $x_j\in X_j$ is Gorenstein and $l_j<l_i$.
\end{proof}

We remark that our induction does not apply to the study of minimal log discrepancies on a smooth threefold because the invariant $l_i$ is zero whenever $x_i\in X_i$ is smooth.

\section*{Acknowledgements}
This work was inspired during my visit to Peking University, where I discussed with Jihao Liu many remaining problems on threefold singularities. I wish to express my gratitude to him for the valuable discussions as well as his warm hospitality.

\end{document}